\documentclass[12pt]{article}
\usepackage{times}
\usepackage{amsmath,amsfonts,amstext,amssymb,amsbsy,amsopn,amsthm,eucal,slashed,amstext,enumerate}

\usepackage{color}
\usepackage{txfonts}
\usepackage{dsfont}
\usepackage{graphicx}   


\markright{5555}
\numberwithin{equation}{section}
\setcounter{secnumdepth}{3}
\setcounter{tocdepth}{2}
  \theoremstyle{plain}
 \newtheorem{theorem}[equation]{Theorem}
 \newtheorem{thm}[equation]{Theorem}
\newtheorem{proposition}[equation]{Proposition}
 \newtheorem{lemma}[equation]{Lemma}

 \theoremstyle{remark}

 \newtheorem{remark}[equation]{Remark}

\theoremstyle{definition}
 \newtheorem{definition}[equation]{Definition}


\newcommand{\abs}[1]{\lvert#1\rvert}
\newcommand{\norm}[1]{\lvert\lvert#1\rvert\rvert}
\newcommand{\ip}[1]{\left\langle#1\right\rangle}


\topmargin  = 0.2mm       
\evensidemargin =0.2mm

\setlength{\textheight}{8.75in} \setlength{\textwidth}{6.5in}
\setlength{\columnsep}{0.5in} \setlength{\topmargin}{0in}
\setlength{\headheight}{0in} \setlength{\headsep}{0in}
\setlength{\parindent}{1pc}
\setlength{\oddsidemargin}{0in}  
\setlength{\evensidemargin}{0in}

\newcommand{\Rc}{{\rm Ric}}
\newcommand{\Ric}{{\rm Ric}}

\newcommand{\vol}{{\rm vol}}

\newcommand{\grad}{{\rm grad}}

\newcommand{\dR}{\mathds{R}}

\newcommand{\Hess}{{\rm Hess}}

\newcommand{\R}{{ \rm R}}
\newcommand{\p}{\parallel}
\newcommand{\Rm}{\text{Rm}}

\newcommand{\eps}{\varepsilon}

\newcommand{\cH}{\mathcal{H}}

\newcommand{\M}{\mathcal{M}}
\newcommand{\F}{\mathcal{F}}

\newcommand{\Lap}{\Delta}
\newcommand{\D}{\nabla}

\newcommand{\RR}{\mathbb{R}}


\begin{document}

\title{Weak solutions for the Ricci flow I}
\author{Robert Haslhofer and Aaron Naber\thanks{R.H. has been supported by NSF grant DMS-1406394, A.N. has been supported by NSF grant DMS-1406259.}}
\date{\today}

\maketitle
\begin{abstract}
This is the first of a series of papers, where we introduce a new class of estimates for the Ricci flow, and use them both to characterize solutions of the Ricci flow and to provide a notion of weak solutions to the Ricci flow in the nonsmooth setting.  In this first paper, we prove various new estimates for the Ricci flow, and show that they in fact characterize solutions of the Ricci flow.  Namely, given a family $(M,g_t)_{t\in I}$ of Riemannian manifolds, we consider the path space $P\mathcal{M}$ of its space time $\mathcal{M}=M\times I$.
Our first characterization says that $(M,g_t)_{t\in I}$ evolves by Ricci flow if and only if an infinite dimensional gradient estimate holds for all functions on $P\mathcal{M}$.
We prove additional characterizations in terms of the $C^{1/2}$-regularity of martingales on path space, as well as characterizations in terms of log-Sobolev and spectral gap inequalities for a family of Ornstein-Uhlenbeck type operators. Our estimates are infinite dimensional generalizations of much more elementary estimates for the linear heat equation on $(M,g_t)_{t\in I}$, which themselves generalize the Bakry-Emery-Ledoux estimates for spaces with lower Ricci curvature bounds.
Based on our characterizations we can define a notion of weak solutions for the Ricci flow. We will develop the structure theory of these weak solutions in subsequent papers. 
\end{abstract}

\small{\tableofcontents}


\section{Introduction}


\subsection{Background and overview}

The Ricci flow, introduced by Richard Hamilton \cite{Ham82}, evolves Riemannian manifolds in time and is given by the equation
\begin{equation}\label{eq_ricci_flow}
 \partial_t g_t=-2\Ric_{g_t}.
\end{equation}
As with all geometric equations, the key to the analysis of \eqref{eq_ricci_flow} is to prove estimates that are strong enough to capture the analytic and geometric behavior. 
Many of the known estimates for the Ricci flow are similar in nature to -- but often have been harder to develop than -- the corresponding estimates for other geometric equations.  Since the geometry itself is evolving, even the most basic geometric quantities, like the heat kernel, can behave quite badly. Furthermore, many techniques from geometric analysis that rely on the presence of an ambient space (or a fixed underlying manifold) are not available for the Ricci flow. In particular, it has been a longstanding open problem to find a notion of weak solutions for the Ricci flow.\\

The goal of this paper, the first in a series, is to introduce a new class of estimates for the Ricci flow.  Our new estimates not only give new information about solutions of the Ricci flow, but are designed to be sufficiently powerful that they give analytic criteria for determining when a family of Riemannian manifolds solves the Ricci flow.  That is, we will see that if a family $(M,g_t)_{t\in I}$ of Riemannian manifolds satisfies the analytic estimates of this paper, then in fact this family solves \eqref{eq_ricci_flow}.  Such analytic criteria can be used to define weak solutions and have become of increasing importance in other areas of Ricci curvature, see for instance \cite{LottVillani,Sturm,AGS,Naber}, but have not been available up to now for the Ricci flow itself.\\

We start with the comparably simple task of characterizing supersoluions of the Ricci flow, i.e. families $(M,g_t)_{t\in I}$ such that $\partial_t g_t\geq -2\Ric_{g_t}$,
see Section \ref{sec_introsuper} and Section \ref{sec_supersol}. As summarized in Theorem \ref{thm_supersol}, supersoluions can be characterized in terms of various estimates for the linear heat equation on $(M,g_t)_{t\in I}$.  These estimates generalize the Bakry-Emery-Ledoux estimates  for manifolds with lower Ricci curvature bounds \cite{BakryEmery,BakryLedoux}, see also McCann-Topping \cite{McCannTopping}.  In particular, one can characterize supersolutions in terms of a log-Sobolev inequality, and a Poincare-inequality.  The log-Sobolev inequality is not the one discovered by Perelman \cite{Per1}, but the more recent one from Hein-Naber \cite{HN_eps}.\\

To characterize solutions of the Ricci flow, and not just supersolutions, we prove infinite dimensional generalizations of the above estimates.
Motivated by work in stochastic analysis \cite{Malliavin_icm,Driver_ibp,Fang,AE_logsob,Hsu_logsob} and prior work of the second author \cite{Naber}, our approach to finding such infinite dimensional generalizations is to do analysis on path space. More precisely, it turns out that the right path space to consider, is the space $P\mathcal{M}$ of continuous curves in the space-time $\mathcal{M}=M\times I$,
which are allowed to move arbitrarily along the manifold $M$ but are required to move backwards along the $I$ factor with unit speed.
To be able to do analysis on $P\mathcal{M}$ we have to set up quite a bit of machinery from stochastic analysis,
notably the notions of Wiener measure, stochastic parallel transport, parallel gradient and Malliavin gradient, adapted to our space-time setting.
We describe this briefly in Section \ref{sec_introprelim} and give a comprehensive treatment in Section \ref{sec_prelim}.
For example, the construction of parallel transport is quite subtle, since almost no curve of Brownian motion is $C^1$. Nevertheless, using ideas from Eells-Elworthy-Malliavin \cite{Elworthy,Malliavin}, we can make sense of parallel transport on space-time for almost every curve of Brownian motion, see Section \ref{sec_brownian}.\\

Having set the stage, let us now discuss our infinite dimensional estimates.
Our first characterization in Section \ref{sec_introgradient} directly relates solutions of the Ricci flow to gradient estimates on path space.
Specifically, we will see that a family $(M,g_t)_{t\in I}$ evolves by Ricci flow if and only if a certain gradient inequality (R2) holds for all functions on $P\mathcal{M}$.  We will see how this directly generalizes the gradient estimate (S2) proved in Theorem \ref{thm_supersol} for supersolutions.
Our second characterization in Section \ref{sec_intromartingales} is in terms of the time regularity of martingales on path space.  Specifically, we will see that martingales $F^\tau$ on path space satisfy a precise $C^{1/2}$-H\"older estimate (R3) if and only if the family $(M,g_t)_{t\in I}$ evolves by Ricci flow.  
Our third characterization in Section \ref{sec_intrologsob} is in terms of an infinite dimensional log-Sobolev inequality (R4), and our final characterization in Section \ref{sec_introgap} is in terms of the corresponding spectral gap (R5).
Our characterizations of solutions of the Ricci flow can be thought of as infinite dimensional generalizations of the estimates for supersolutions.
Namely, if we evaluate our infinite dimensional estimates for the simplest possible test functions, i.e. functions on path space that only depend on the value of the curve at a single time, then we actually recover the finite dimensional estimates from Theorem \ref{thm_supersol}. Of course, there are many more sophisticated test functions that we can plug in our estimates, and this is one of the reasons why our estimates are actually strong enough to characterize solutions, and not just supersolutions.
Our characterizations of solutions of the Ricci flow constitute the main results of this article and are summarized in Theorem \ref{main_thm_intro}.\\

Let us also emphasize that Theorem \ref{main_thm_intro} truly relies on ideas from stochastic analysis, i.e. doing analysis on path space $P\M$, as it seems that analysis on $(M,g_t)_{t\in I}$ can only be used to characterize supersolutions but not solutions. In fact, some indications that stochastic analysis might be useful in the study of Ricci flow have already appeared previously in the literature: Arnoundon-Coulibaly-Thalmaier proved the existence of Brownian motion in a time dependent setting \cite{ACT} (see also \cite{Coulibaly}), and used this to prove a Bismut type formula for the Ricci flow. Kuwada-Philipowski studied the relationship between time dependent Brownian motion and Perelman's $\mathcal{L}$-geodesics and obtained a nice nonexplosion result \cite{KuwadaPhilip,KuwadaPhilip2}  (see also \cite{Cheng}), and Guo-Philipowski-Thalmaier found some applications of stochastic analysis to ancient solutions \cite{GPT}. Based on our new estimates there are many more directions to explore.\\

In future papers of this series we will use our estimates to investigate singularities in the Ricci flow.  In most situations, the Ricci flow develops singularities in finite time. Typically, the curvature blows up in certain regions but remains bounded on the remaining parts of the manifold \cite{Hamilton_survey}.  One would then like to understand these singularities and find ways to continue the flow beyond the first singular time.\\

The formation of singularities is of course an ubiquitous phenomenon in the study of nonlinear PDEs.
For other geometric evolution equations there are good notions of weak solutions that allow one to continue the flow through any singularity, e.g. Brakke and level set solutions for the mean curvature flow \cite{Brakke,EvansSpruck,CGG}, and Chen-Struwe solutions for the harmonic map heat flow \cite{ChenStruwe}. For the Ricci flow however, it is only known in a few special - albeit very important - cases, how to continue the flow through singularities. Most notably, Perelman's Ricci flow with surgery \cite{Per1,Per2} provides a highly successful way to deal with the formation of singularities in dimension three. Surgery has also been implemented in the case of four-manifolds with positive isotropic curvature \cite{Ham_pic,CZ_pic}.
Recently, Kleiner-Lott proved the beautiful result that as the surgery parameters degenerate it is possible to pass to certain limits, called singular Ricci flows \cite{KL_singular}. 
Also, there has been a lot of progress in the K\"ahler case, see e.g. Song-Tian \cite{SongTian} and Eyssidieux-Guedj-Zeriahi \cite{EGZ_kahler}. In most other cases however, it is a widely open problem how to deal with the formation of singularities.\\

In the second paper of this series we will use the estimates of this first paper to give a notion of the Ricci flow for a family of metric-measure spaces.  Using analytic characterizations to define weak solutions is a well developed tool in the context of lower Ricci curvature \cite{LottVillani,Sturm,AGS}, and more recently in the context of bounded Ricci curvature \cite{Naber}.  Similarly, based on the characterizations of Theorem \ref{main_thm_intro} we will define a notion of weak solutions for the Ricci flow and develop their theory. We will discuss this in subsequent papers, but let us briefly describe the idea. We consider metric-measure spaces $\mathcal{M}$ equipped with a time function and a linear heat flow. We call $\mathcal{M}$ a weak solution of the Ricci flow if and only if the gradient estimate (R2) holds on $P\mathcal{M}$. We then establish various geometric and analytic estimates for these weak solutions.
One of our applications concerns a question of Perelman about limits of Ricci flows with surgery \cite{Per1}. Namely, the metric completion of the space-time of Kleiner-Lott \cite{KL_singular}, which they obtained as a limit of Ricci flows with surgery where the neck radius is sent to zero, is a weak solution in our sense.

\subsection{Characterization of supersolutions of the Ricci flow}\label{sec_introsuper}

As a motivation for our approach to characterize solutions of the Ricci flow, let us first characterize supersolutions, i.e. smooth families of Riemannian manifolds such that
\begin{equation}
\partial_t g_t\geq -2\Rc_{g_t}.
\end{equation}

To fix notation, let $(M,g_t)_{t\in I}$ be a smooth family of Riemannian manifolds, where $I=[0,T_1]$. 
To avoid technicalities, we assume throughout the paper that all manifolds are complete and that
\begin{equation}\label{tech_ass}
\sup_{M\times I}\left(\abs{\Rm}+\abs{\partial_t g_t}+\abs{\nabla \partial_t g_t}\right)<\infty.
\end{equation}
However, all our estimates are independent of the implicit constant in \eqref{tech_ass}.
We consider the heat equation $(\partial_t-\Lap_{g_t})w=0$ on our evolving manifolds $(M,g_t)_{t\in I}$.
For every $s,T\in I$ with $ s\leq T$, and every smooth function $u$ with compact support, we write $P_{sT}u$ for the solution at time $T$ with initial condition $u$ at time $s$.
In other words,
\begin{equation}
(P_{sT}u)(x)=\int_M u(y)\, H(x,T\,|\,y,s) d\vol_{g(s)}(y),
\end{equation}
where $H(x,T\,|\,y,s)$ is the heat kernel with pole at $(y,s)$. We write $d \nu_{(x,T)}(y,s)=H(x,T\,|\,y,s) d\vol_{g(s)}(y)$. It is often useful to think of $d \nu_{(x,T)}$ as the adjoint heat kernel measure based at $(x,T)$.

The following theorem summarizes our characterizations of supersolutions of the Ricci flow.

\begin{thm}[Characterization of supersolutions of the Ricci flow]\label{thm_supersol}
For every smooth family $(M,g_t)_{t\in I}$ of Riemannian manifolds (complete, satisfying \eqref{tech_ass}), the following conditions are equivalent:
\begin{enumerate}[(S1)]
 \item \label{S1} The family $(M,g_t)_{t\in I}$ is a supersolution of the Ricci flow,
 \begin{equation*}\partial_t g_t\geq -2\Rc_{g_t}.
 \end{equation*}
\item \label{S2} For all test functions $u$, the heat equation on $(M,g_t)_{t\in I}$ satisfies the gradient estimate
\begin{equation*}
\abs{\D P_{sT} u}\leq P_{sT}\abs{\D u}.
\end{equation*}
\item \label{S3} For all test functions $u$,  the heat equation on $(M,g_t)_{t\in I}$ satisfies the estimate
\begin{equation*}
\abs{\D P_{sT} u}^2\leq P_{sT}\abs{\D u}^2.
\end{equation*}
\item \label{S4} For all functions $u:M\to \dR$ with $\int_M u^2(y) \, d\nu_{(x,T)}(y,s)=1$, we have the log-Sobolev inequality
\begin{equation*}
 \int_M u^2(y)\log u^2(y) \, d\nu_{(x,T)}(y,s)\leq 4(T-s)\!\! \int_M\abs{\D u}_{g_s}^2\! (y)\,d\nu_{(x,T)}(y,s).
\end{equation*}
\item \label{S5}
For all functions $u:M\to \dR$ with $\int_M u(y) \, d\nu_{(x,T)}(y,s)=0$, we have the Poincare-inequality
\begin{equation*}
\int_M u^2 \, d\nu_{(x,T)}(y,s)\leq 2 (T-s)\!\! \int_M \abs{\D u}^2_{g_s}\, d\nu_{(x,T)}(y,s).
\end{equation*}
\end{enumerate}
\end{thm}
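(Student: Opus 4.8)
The plan is to prove all five equivalences through the implications $(S1)\Rightarrow(S3)$, $(S1)\Rightarrow(S2)$, $(S2)\Rightarrow(S4)$, $(S4)\Rightarrow(S5)$, $(S3)\Rightarrow(S1)$ and $(S5)\Rightarrow(S1)$. The single computation driving the forward direction is the time-dependent Bochner formula: if $w$ solves the heat equation $(\partial_t-\Lap_{g_t})w=0$ on $(M,g_t)$, then
\begin{equation*}
(\partial_t-\Lap_{g_t})\abs{\D w}^2=-2\abs{\Hess w}^2-(\partial_tg_t+2\Ric_{g_t})(\D w,\D w),
\end{equation*}
which follows by combining the usual Bochner identity with $\partial_t\abs{\D w}^2=-(\partial_tg_t)(\D w,\D w)+2\langle\D\partial_tw,\D w\rangle$ and $\partial_tw=\Lap_{g_t}w$. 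In other words, (S1) says exactly that $\abs{\D w}^2$ is a subsolution of the heat equation. I will repeatedly use the transport identity $\tfrac{d}{dr}\big(P_{rT}\phi_r\big)=P_{rT}\big((\partial_r-\Lap_{g_r})\phi_r\big)$ for a time-dependent family $\phi_r$, the semigroup law $P_{rT}\circ P_{sr}=P_{sT}$, and the fact that each $P_{sT}$ is positivity preserving with $P_{sT}1=1$; all of this holds here thanks to the bounded-geometry assumption \eqref{tech_ass}.

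For the forward implications: for $(S1)\Rightarrow(S3)$, fix $s\le T$, set $w_t=P_{st}u$, and consider $\Phi(t)=P_{tT}\big(\abs{\D w_t}^2\big)(x)$, so that $\Phi(s)=P_{sT}\big(\abs{\D u}^2\big)(x)$, $\Phi(T)=\abs{\D P_{sT}u}^2(x)$, and by the transport identity and Bochner $\Phi'(t)=P_{tT}\big(-2\abs{\Hess w_t}^2-(\partial_tg_t+2\Ric)(\D w_t,\D w_t)\big)(x)\le0$; integrating gives (S3). For $(S1)\Rightarrow(S2)$, one runs the same argument with the regularized quantity $v_\eps=\sqrt{\abs{\D w_t}^2+\eps}$: using $\abs{\D\abs{\D w}^2}^2\le4\abs{\Hess w}^2\abs{\D w}^2$ one checks $(\partial_t-\Lap_{g_t})v_\eps\le-\tfrac{1}{2v_\eps}(\partial_tg_t+2\Ric)(\D w,\D w)\le0$, so $v_\eps$ is a subsolution, hence $v_\eps(T)\le P_{sT}v_\eps(s)\le P_{sT}\abs{\D u}+\sqrt{\eps}$, and letting $\eps\to0$ yields (S2). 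For $(S2)\Rightarrow(S4)$, run the Bakry--Ledoux interpolation: with $f=u^2$ normalized so that $P_{sT}f(x)=1$, the function $\psi(r)=P_{rT}\big((P_{sr}f)\log(P_{sr}f)\big)(x)$ satisfies $\psi(T)=0$, $\psi(s)=\int_M u^2\log u^2\,d\nu_{(x,T)}(\cdot,s)$, and $\psi'(r)=-P_{rT}\big(\abs{\D P_{sr}f}^2/P_{sr}f\big)(x)$; applying (S2) to $u^2$ together with Cauchy--Schwarz for the probability measure $d\nu_{(x,T)}$ gives $\abs{\D P_{sr}f}^2/P_{sr}f\le4\,P_{sr}\big(\abs{\D u}^2\big)$, whence $\psi(s)=-\int_s^T\psi'(r)\,dr\le4\int_s^T P_{rT}P_{sr}\big(\abs{\D u}^2\big)(x)\,dr=4(T-s)P_{sT}\big(\abs{\D u}^2\big)(x)$, which is (S4). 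For $(S4)\Rightarrow(S5)$, substitute $u=1+\eps v$ with $\int_M v\,d\nu_{(x,T)}=0$ into (S4) (renormalized so $\int_M u^2\,d\nu_{(x,T)}=1$) and compare the $\eps^2$-coefficients of the two sides.

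The two converse implications are the crux. In both, given $(x_0,t_0)\in M\times I$ and $X\in T_{x_0}M$, I choose a compactly supported test function $v$ with $\D v(x_0)=X$ and $\Hess v(x_0)=0$ (e.g.\ $\chi\cdot\langle X,\exp_{x_0}^{-1}(\cdot)\rangle$ with a cutoff $\chi\equiv1$ near $x_0$), apply the estimate on $[t_0,t_0+\eps]$ based at $x_0$, and differentiate at $\eps=0$. For $(S3)\Rightarrow(S1)$, the quantity $\Phi(t)=P_{tT}\big(\abs{\D P_{t_0t}v}^2\big)(x_0)$ satisfies $\Phi(T)-\Phi(t_0)=\int_{t_0}^{T}P_{tT}\big(-2\abs{\Hess P_{t_0t}v}^2-(\partial_tg_t+2\Ric)(\D P_{t_0t}v,\D P_{t_0t}v)\big)(x_0)\,dt$, which (S3) forces to be $\le0$; dividing by $\eps=T-t_0$ and letting $\eps\to0$ gives $-2\abs{\Hess v}^2(x_0)-(\partial_tg+2\Ric)(X,X)(x_0,t_0)\le0$, hence $(\partial_tg+2\Ric)(X,X)\ge0$ by the choice of $v$, and since $(x_0,t_0,X)$ were arbitrary we obtain (S1). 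For $(S5)\Rightarrow(S1)$, write $\mathrm{Var}(r)=P_{rT}\big((P_{sr}v)^2\big)(x)-\big(P_{sT}v(x)\big)^2$, compute $\mathrm{Var}'(r)=-2P_{rT}\big(\abs{\D P_{sr}v}^2\big)(x)$, so that (S5) becomes $\tfrac{1}{T-s}\int_s^T P_{rT}\big(\abs{\D P_{sr}v}^2\big)(x)\,dr\le P_{sT}\big(\abs{\D v}^2\big)(x)$; the same short-time expansion again isolates $-2\abs{\Hess v}^2(x)-(\partial_tg+2\Ric)(X,X)\le0$, hence (S1).

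The main obstacle is not the algebra, which is elementary, but the analysis: one must rigorously justify the parabolic maximum and comparison principles, the differentiation under the heat semigroup in the transport identity, and the short-time asymptotics, all on a possibly noncompact manifold carrying an evolving metric. This is precisely the role of the curvature and derivative bounds in \eqref{tech_ass}: they supply bounded geometry, Gaussian bounds for $H(x,T\,|\,y,s)$ and its derivatives, and stochastic completeness, yet are quantitative enough that the constants appearing in (S2)--(S5) do not depend on them.
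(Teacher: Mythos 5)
Your proof is correct and follows essentially the same route as the paper: the time-dependent Bochner formula makes $\abs{\D w}^2$ a parabolic subsolution under (S1), and the heat-semigroup interpolation (your ``transport identity'', the paper's ``heat kernel homotopy principle'') converts this into (S4)/(S5), while the converse directions use short-time asymptotics with a test function having $\Hess v=0$ at the base point. The only cosmetic reorganizations are that you derive (S4) from (S2) via Cauchy--Schwarz applied to $f=u^2$ rather than from (S1) via the paper's direct Bochner/Peter--Paul estimate on $\abs{\D P_{st}w}^2/P_{st}w$, and that you close the cycle with (S5)$\Rightarrow$(S1) rather than (S5)$\Rightarrow$(S3); both substitutions are equivalent in content.
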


In essence, this all follows from the Bochner-formula for the heat operator $\Box=\partial_t-\Lap_{g_t}$,
\begin{equation}\label{parboch}
 \Box \abs{\D u}^2=2\ip{\D u,\D\Box u}-2\abs{\D^2 u}^2-(\partial_t g+2\Rc)(\grad u,\grad u),
\end{equation}
see Section \ref{sec_supersol} for the (easy) proof of Theorem \ref{thm_supersol}. The reader can also view this as a good toy model for the more sophisticated infinite-dimensional computations on path space that we carry out in later sections.

\begin{remark}
 Theorem \ref{thm_supersol} can be thought of as parabolic version of the Bakry-Emery characterization of nonnegative Ricci curvature \cite{BakryEmery,BakryLedoux}.
Another interesting characterization of supersolutions of the Ricci flow, in terms of the Wasserstein distance, has been given by McCann-Topping \cite{McCannTopping}.
\end{remark}

\subsection{Characterization of solutions of the Ricci flow}\label{sec_introsol}

In this section we describe our main estimates on path space, and use them to characterize solutions of the Ricci flow.

\subsubsection{Stochastic analysis on evolving manifolds}\label{sec_introprelim}

Our estimates require quite some machinery from stochastic analysis,
notably the notions of Wiener measure, stochastic parallel transport, parallel gradient and Malliavin gradient, adapted to our time-dependent setting.
We will now briefly describe these notions, and refer to Section \ref{sec_prelim} for a more complete treatment.\\

Let $(M,g_t)_{t\in I}$ be a smooth family of Riemannian manifolds, where $I=[0,T_1]$. We recall that we always assume that our manifolds are complete and that \eqref{tech_ass} is satisfied, though the second assumption is for convenience.
Throughout this work we will think of the evolving manifolds in terms of the \textbf{space-time} $\M=M\times I$.
As observed by Hamilton \cite{Hamilton_Harnack} there is a
natural \textbf{space-time connection} defined by
\begin{equation}\label{eq_spacetime_connection}
\nabla_X Y=\nabla^{g_t}_X Y,\qquad\qquad \nabla_t Y=\partial_t Y+\frac{1}{2}\partial_t g_t(Y,\cdot)^{\sharp_{g_t}}.
\end{equation}
The point is that this connection is compatible with the metric, i.e. $\tfrac{d}{dt}\abs{Y}_{g_t}^2=2\langle Y,\nabla_t Y\rangle$.

It is useful to consider \textbf{space-time curves} going \emph{backwards in time}, c.f. \cite{LiYau,Per1}. Namely, for each $(x,T)\in \M$, we consider the \textbf{based path space} $P_{(x,T)}\M$ consisting of all space-time curves of the form $\{\gamma_\tau=(x_\tau,T-\tau)\}_{\tau\in [0,T]}$,
where $\{x_\tau\}_{\tau\in [0,T]}$ is a continuous curve in $M$ with $x_0=x$.

We equip the path space $P_{(x,T)}\M$ with a probability measure $\Gamma_{(x,T)}$, that we call the \textbf{Wiener measure} of \textbf{Brownian motion} on our evolving family of manifolds, based at $(x,T)$. The measure $\Gamma_{(x,T)}$ is uniquely characterized by the following property. If $e_{\bf{\sigma}}:P_{(x,T)}\M\to M^{k}$, $\gamma\mapsto (x_{\sigma_1},\ldots, x_{\sigma_k})$, is the \textbf{evaluation map} at ${\bf{\sigma}}=\{0\leq \sigma_1\leq\ldots\leq \sigma_k\leq T\}$, and if we write $s_i=T-\sigma_i$, then 
\begin{equation}\label{intro_pushforwardwiener}
e_{\bf{\sigma},\ast}d\Gamma_{(x,T)}(y_1,\ldots,y_k)=H(x,T|y_1,s_1)d\vol_{g_{s_1}}(y_1)\cdots H(y_{k-1},s_{k-1}|y_k,s_k)d\vol_{g_{s_k}}(y_k),
\end{equation}
where $H$ is the heat kernel of $\partial_t-\Lap_{g_t}$; see Section \ref{sec_brownian} for the construction of Brownian motion.

It is often convenient to consider the \textbf{total path space} $P_T\M=\cup_{x\in M} P_{(x,T)}\M$. Note that we can identify $(P_T\M,\Gamma_{(x,T)})$ with $(P_{(x,T)}\M,\Gamma_{(x,T)})$,
since the measure $\Gamma_{(x,T)}$ concentrates on curves starting at $(x,T)$. Sometimes it is also useful to equip the total path space  $P_T\M$ with the measure $\Gamma_T=\int \Gamma_{(x,T)} d\vol_{g_T}(x)$.\\

The space $(P_T\M,\Gamma_{(x,T)})$ can be equipped with a notion of \textbf{stochastic parallel transport}, a family of isometries $P_\tau(\gamma):(T_{x_\tau}M,g_{T-\tau})\to (T_xM,g_T)$.
If the curves $\gamma$ were $C^1$, then $P_\tau(\gamma)$ would just be the parallel transport from differential geometry, with respect to the natural space-time connection defined in \eqref{eq_spacetime_connection}. Of course, almost no curve of Brownian motion is $C^1$. Nevertheless, using deep ideas from Eells-Elworthy-Malliavin we can still make sense of $P_\tau(\gamma)$ for almost every curve $\gamma$,
see Section \ref{sec_brownian} for the construction.\\

The space $(P_T\M,\Gamma_{(x,T)})$ can be equipped with two natural notions of gradient.
Suppose first that $F:P_{(x,T)}\M\to \dR$ is a \textbf{cylinder function},
i.e. a function of the form $F=u\circ e_\sigma$, where $e_\sigma: P_{(x,T)}\M\to M^k$ is an evaluation map and $u:M^k\to \dR$ is a smooth function with compact support.
If $v\in (T_xM,g_T)$, then for almost every (a.e.) curve $\gamma$, we can consider the continuous vector field $V=\{V_\tau=P_\tau^{-1}v\}_{\tau\in [0,T]}$ along $\gamma$, where $P_\tau=P_\tau(\gamma)$ denotes stochastic parallel transport as in the previous paragraph.
Note that the directional derivative $D_VF(\gamma)$ is well defined, as a limit of difference quotients as usual.

The \textbf{parallel gradient} $\D^\p F(\gamma)\in (T_xM, g_T)$ is then defined by the condition that
\begin{equation}\label{intro_pargrad}
 D_V F(\gamma)=\langle \D^\p F(\gamma),v\rangle_{(T_xM, g_T)}
\end{equation}
for all $v\in (T_xM,g_T)$, where $V=\{V_\tau=P_\tau^{-1}v\}_{\tau\in [0,T]}$ is the parallel vector field associated to $v$, as above. More generally, there is a one parameter family of parallel gradients $\D^\p_\sigma$ ($0\leq \sigma\leq T$), which captures the part of the gradient coming from the time interval $[\sigma,T]$. In particular, $\D^\p=\D^\p_0$.

The \textbf{Malliavin gradient} $\D^\cH F$ is defined along similar lines, but takes values in an infinite dimensional Hilbert space. Namely,
let $\mathcal{H}$ be the Hilbert-space of ${H}^1$-curves $\{v_\tau\}_{\tau\in[0,T]}$ in $(T_xM,g_T)$ with $v_0=0$, equipped with the inner product
$ \langle v,w \rangle_{\mathcal{H}}=\int_0^T \left\langle \dot{v}_\tau,\dot{w}_\tau\right\rangle_{(T_xM,g_T)} d\tau$.
Then $\nabla^\cH F:P_{(x,T)}\M\to \mathcal{H}$ 
is the unique almost everywhere defined function such that
\begin{equation}\label{intro_mall_grad}
 D_V F(\gamma)=\langle \nabla^\cH F(\gamma),v\rangle_{\mathcal{H}},
\end{equation}
for a.e. curve $\gamma$, and every $v\in \mathcal{H}$, where $V=\{P_\tau^{-1} v_\tau\}_{\tau\in [0,T]}$.

Having defined them on cylinder functions, the ($\sigma$-)parallel gradient and the Malliavin gradient can be extended to closed unbounded operators on $L^2$, see Section \ref{sec_gradients} for details.

Finally, the \textbf{Ornstein-Uhlenbeck operator} $\mathcal{L}=\nabla^{\cH \ast}\nabla^\cH$ is defined by composing the Malliavin gradient with its adjoint. More generally, there is a family of Ornstein-Uhlenbeck operators $\mathcal{L}_{\tau_1,\tau_2}$ ($0\leq \tau_1<\tau_2\leq T$), which captures the part of the Laplacian coming from the time interval $[\tau_1,\tau_2]$. In particular, $\mathcal{L}=\mathcal{L}_{0,T}$.

\subsubsection{Ricci flow and the gradient estimate}\label{sec_introgradient}

Our first characterization of solutions of the Ricci flow is in terms of an infinite dimensional gradient estimate on the associated path space.
Let $(M,g_t)_{t\in I}$ be smooth family of Riemannian manifolds and let $P_T\M$ be its path space, equipped with the Wiener measure and the parallel gradient.
If $F:P_T\M\to\dR$ is a sufficiently nice function, for instance a cylinder function, one can ask whether one can control the gradient of $\int_{P_T\M}Fd\Gamma_{(x,T)}$
viewed as a function of $x\in M$, in terms of some natural gradient of $F$ viewed as a function on path space.
In fact, the answer to this question turns out to be highly relevant, in that it yields our first characterization of solutions of the Ricci flow.
Namely, we prove that $(M,g_t)_{t\in I}$ evolves by Ricci flow if and only if the gradient estimate
\begin{equation*}
(R2)\qquad \abs{\nabla_x\int_{P_T\M} F d\Gamma_{(x,T)}}\leq \int_{P_T\M}\abs{\nabla^\p F}d\Gamma_{(x,T)} \, ,
\end{equation*}
holds for all function $F\in L^2(P_T\M,\Gamma_T)$ (for a.e. $(x,T)\in\M$).

\begin{remark}\label{rem_gradest} The infinite dimensional gradient estimate (R2) can be thought of as (vast) generalization of the finite dimensional gradient estimate (S2)
for the heat equation. Namely, let $F=u\circ e_\sigma:P_T\M\to M\to \dR$ be a $1$-point cylinder function, and write $s=T-\sigma$. By equation \eqref{intro_pushforwardwiener} the
pushforward measure
\begin{equation}\label{intro_push_heat}
e_{\sigma,\ast} d\Gamma_{(x,T)}=d\nu_{(x,T)}(\cdot,s)
\end{equation}
is given by the heat kernel measure $d \nu_{(x,T)}(y,s)=H(x,T\,|\,y,s) d\vol_{g(s)}(y)$, and thus
\begin{equation}
 \int_{P_T\M} F d\Gamma_{(x,T)}= \int_{M} u\, e_{\sigma,\ast} d\Gamma_{(x,T)} =(P_{sT}u)(x).
\end{equation}
Moreover, using \eqref{intro_pargrad} on sees that $\abs{\nabla^\p F}(\gamma)=\abs{\nabla u}_{g_s}\!(e_\sigma(\gamma))$, which together with \eqref{intro_push_heat} implies that
\begin{equation}
 \int_{P_T\M} \abs{\D^\p F} d\Gamma_{(x,T)}= \int_{M} \abs{\D u}\, e_{\sigma,\ast} d\Gamma_{(x,T)} =(P_{sT}\abs{\D u})(x).
\end{equation}
Thus, in the special case of $1$-point cylinder function the estimate (R2) reduces to the finite dimensional heat equation estimate
\begin{equation*}
(\textrm{S2})\qquad \abs{\nabla P_{sT}u}\leq P_{sT}\abs{\D u}. 
\end{equation*}
Of course, there are many more test functions on path space than just $1$-point cylinder function. This is one of the reasons why our infinite dimensional estimate (R2) is strong enough to characterize solutions of the Ricci flow,
while the finite dimensional heat equation estimate (S2) just characterizes supersolutions.
\end{remark}

\subsubsection{Ricci flow and the regularity of martingales}\label{sec_intromartingales}

Our second characterization of solutions of the Ricci flow is in terms of the regularity of martingales on its path space.
Let $(M,g_t)_{t\in I}$ be a smooth family of Riemannian manifolds, and let $P_T\M$ be its path space.
For every function $F\in L^2(P_T\M,\Gamma_{(x,T)})$, we can consider the \textbf{induced martingale} $\{F^\tau\}_{\tau\in [0,T]}$,
\begin{equation}\label{intro_martingale}
 F^{\tau}(\gamma)=\int_{P_{T-\tau}\M} F(\gamma|_{[0,\tau]}\ast \gamma') d\Gamma_{\gamma_\tau}(\gamma'),
\end{equation}
where the integral is over all Brownian curves $\gamma'$ based at $\gamma_\tau$, and $\ast$ denotes concatenation. The family $\{F^\tau\}_{\tau\in [0,T]}$ indeed has the martingale property
$(F^{\tau'})^\tau=F^\tau$ ($\tau'\geq \tau$) and captures how $F$ depends on the $[0,\tau]$-part of the curves, see Section \ref{sec_condexp}.
The \textbf{quadratic variation} $[F^\bullet]_\tau$ of the martingale $\{F^\tau\}_{\tau\in [0,T]}$ is defined by 
$[F^\bullet]_\tau=\lim_{\norm{\{\tau_j\}}\to 0} \sum_k (F^{\tau_{k}}-F^{\tau_{k-1}})^2$,
where the limit is taken in probability, over all partions $\{\tau_j\}$ of $[0,\tau]$ with mesh going to zero, see Section \ref{sec_condexp}.
It turns out that solutions of the Ricci flow can be characterized in terms of certain bounds for $\frac{d[F^\bullet]_\tau}{d\tau}$.
Namely, we prove that $(M,g_t)_{t\in I}$ evolves by Ricci flow if and only if the estimate
\begin{equation*}
(R3)\qquad \int_{P_T\M}\frac{d[F^\bullet]_\tau}{d\tau} d\Gamma_{(x,T)}\leq 2\int_{P_T\M}\abs{\D^\p_\tau F}^2 d\Gamma_{(x,T)}
\end{equation*}
holds for every $F\in L^2(P_T\M,\Gamma_{(x,T)})$ (for all $(x,T)\in\M$).

\begin{remark}
The estimate (R3) is a (vast) generalization of (S3).
Namely, let $F=u\circ e_\sigma:P_T\M\to M\to \dR$ be a $1$-point cylinder function, and write $s=T-\sigma$.
If $\eps>0$, then by \eqref{intro_push_heat} and \eqref{intro_martingale} we have
\begin{equation}
 F^\eps(\gamma)=\int_M u(y) d\nu_{\gamma_\eps}(y,s)=(P_{s,T-\eps}u)(x_\eps).
\end{equation}
Appying this twice and using the short time asymptotics of the heat kernel, one can compute that
\begin{multline}
 \int_{P_T\M}\frac{d[F^\bullet]_\tau}{d\tau}|_{\tau=0}\, d\Gamma_{(x,T)}=\lim_{\eps\to 0}\frac{1}{\eps}\int_{P_T\M}\left(F^\eps-(F^\eps)^0\right)^2 d\Gamma_{(x,T)}\\
=\lim_{\eps\to 0}\frac{1}{\eps}\int_{M}\left((P_{s,T-\eps}u)(z)-\int_M (P_{s,T-\eps}u)(\hat{z})\, d\nu_{(x,T)}(\hat{z}, T-\eps) \right)^2 d\nu_{(x,T)}(z,T-\eps)
=2\abs{\nabla P_{sT}u}^2(x).\nonumber
\end{multline}
Thus, in the special case of 1-point cylinder functions, (R3) for $\tau=0$ reduces to the estimate\footnote{For $\tau\neq 0$,
one gets the estimate $P_{tT}\abs{\nabla P_{st}u}^2\leq P_{sT}\abs{\D u}^2$, which is easily seen to be equivalent to (S3).} 
\begin{equation*}
(\textrm{S3})\qquad \abs{\nabla P_{sT}u}^2\leq P_{sT}\abs{\D u}^2. 
\end{equation*}
\end{remark}

\subsubsection{Ricci flow and the log-Sobolev inequality}\label{sec_intrologsob}

Our third characterization of solutions of the Ricci flow is in terms of a log-Sobolev inequality on its path space.
Log-Sobolev inequalities have a long history, going back to Gross \cite{Gross}.
In the context of Ricci flow, they appear in Perelman's monotonicity formula \cite{Per1} and also in the inequality (S4) of Hein-Naber \cite{HN_eps}.
We characterize solutions of the Ricci flow via an infinite dimensional generalization of the inequality (S4).
Namely, we prove that $(M,g_t)_{t\in I}$ evolves by Ricci flow if and only if the log-Sobolev inequality
\begin{equation*}
 (R4)\qquad  \int_{P_T\M} \left((F^2)^{\tau_2} \log\, (F^2)^{\tau_2} - (F^2)^{\tau_1} \log\, (F^2)^{\tau_1} \right)\, d\Gamma_{(x,T)}\leq 4 \int_{P_T\M} \langle F,\mathcal{L}_{\tau_1,\tau_2} F\rangle d\Gamma_{(x,T)},
\end{equation*}
holds for every $F$ in the domain of the Ornstein-Uhlenbeck operator $\mathcal{L}_{\tau_1,\tau_2}$ (for all $(x,T)\in\M$ and all $0\leq \tau_1<\tau_2\leq T$). Here, $(F^2)^\tau$ denotes the martingale induced by $F^2$. 

\begin{remark} If $\tau_1=0$ and $\tau_2=T$ the inequality (R4) takes the somewhat simpler form
\begin{equation}
 \int_{P_T\M} F^2 \log\, F^2\, d\Gamma_{(x,T)}\leq 4 \int_{P_T\M} \abs{\nabla^\cH F}^2 d\Gamma_{(x,T)}
\end{equation}
for all $F$ with $\int_{P_T\M}F^2=1$. Specializing further, for a 1-point cylinder function $F=u\circ e_\sigma:P_T\M\to M\to \dR$ ($s=T-\sigma$), using \eqref{intro_mall_grad} one can see that  $\abs{\nabla^{\cH} F}_{\mathcal{H}}^2(\gamma)=(T-s)\abs{\D u}_{g_s}\!(e_\sigma(\gamma))$, c.f. Proposition \ref{prop_malliavin}. Together with \eqref{intro_push_heat} this shows that (R4) then reduces to (S4).
\end{remark}

\subsubsection{Ricci flow and the spectral gap}\label{sec_introgap}
Our final characterization of solutions of the Ricci flow is in terms of the spectral gap of the Ornstein-Uhlenbeck operator on its path space.\footnote{It is of course well known that a log-Sobolev inequality implies a spectral gap. However, the important point we prove is that the spectral gap is in fact strong enough to characterize solutions of the Ricci flow.} We prove that $(M,g_t)_{t\in I}$ evolves by Ricci flow if and only if the Ornstein-Uhlenbeck operator $\mathcal{L}_{\tau_1,\tau_2}$ (for all $(x,T)\in \M$ and all $0\leq \tau_1<\tau_2\leq T$) satisfy the spectral gap estimate
\begin{equation*}
(R5)\qquad  \int_{P_T\M}(F^{\tau_2}-F^{\tau_1})^2 d\Gamma_{(x,T)}\leq 2 \int_{P_T\M}\langle F,\mathcal{L}_{\tau_1,\tau_2} F\rangle d\Gamma_{(x,T)}.
\end{equation*}

\begin{remark}\label{rem_spectgap_red}
In the special case of 1-point cylinder functions, the estimate (R5) again reduces to (S5).
\end{remark}

\subsubsection{Summary of main results}
Our main results are summarized in the following theorem.

\begin{thm}[Characterization of solutions of the Ricci flow]\label{main_thm_intro}
For every smooth family $(M,g_t)_{t\in I}$ of Riemannian manifolds (complete, satisfying \eqref{tech_ass}), the following conditions are equivalent:
\begin{enumerate}[(R1)]
 \item \label{R1} The family $(M,g_t)_{t\in I}$ evolves by Ricci flow,
\begin{equation*}
\partial_t g_t= -2\Rc_{g_t}.
\end{equation*}
\item \label{MT2} For every $F\in L^2(P_T\M,\Gamma_T)$, we have the gradient estimate
\begin{equation*}
\abs{\nabla_x\int_{P_T\M} F d\Gamma_{(x,T)}}\leq \int_{P_T\M}\abs{\nabla^\p F}d\Gamma_{(x,T)}.
\end{equation*}
\item \label{R3} For every $F\in L^2(P_T\M,\Gamma_{(x,T)})$, the induced martingale $\{F^\tau\}_{\tau\in [0,T]}$ satisfies the estimate
\begin{equation*}
 \int_{P_T\M}{\frac{d[F^\bullet]_\tau}{d\tau}} d\Gamma_{(x,T)}\leq 2\int_{P_T\M}\abs{\D^\p_\tau F}^2 d\Gamma_{(x,T)}.
\end{equation*}
\item \label{R4} The Ornstein-Uhlenbeck operator $\mathcal{L}_{\tau_1,\tau_2}$ on based path space $L^2(P_T\M,\Gamma_{(x,T)})$ satisfies the log-Sobolev inequality
\begin{equation*}
 \int_{P_T\M} \left((F^2)^{\tau_2} \log\, (F^2)^{\tau_2} - (F^2)^{\tau_1} \log\, (F^2)^{\tau_1} \right)\, d\Gamma_{(x,T)}\leq 4 \int_{P_T\M} \langle F,\mathcal{L}_{\tau_1,\tau_2} F\rangle \, d\Gamma_{(x,T)}.
\end{equation*}
\item \label{R5} The Ornstein-Uhlenbeck operator $\mathcal{L}_{\tau_1,\tau_2}$ on based path space $L^2(P_T\M,\Gamma_{(x,T)})$ satisfies the spectral gap estimate
\begin{equation*}
 \int_{P_T\M}(F^{\tau_2}-F^{\tau_1})^2 d\Gamma_{(x,T)}\leq 2 \int_{P_T\M}\langle F,\mathcal{L}_{\tau_1,\tau_2} F\rangle d\Gamma_{(x,T)}.
\end{equation*}
\end{enumerate}
\end{thm}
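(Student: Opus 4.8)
The plan is to prove the cyclic chain of implications (R1)$\Rightarrow$(R2)$\Rightarrow$(R3)$\Rightarrow$(R4)$\Rightarrow$(R5)$\Rightarrow$(R1), together with the "if and only if" sharpness (i.e.\ equality is attained for appropriate $F$, which is how we read off the Ricci flow equation back from each estimate). The backbone of all the forward implications is an infinite-dimensional Bochner-type identity on path space, obtained by differentiating the martingale $F^\tau$ and its parallel gradient $\nabla^{\p}_\tau F^\tau$ along the Brownian flow. Concretely, first I would establish the evolution equation for the parallel gradient of a martingale: writing $G_\tau = \nabla^{\p}_\tau F^\tau \in T_xM$, one shows $G_\tau$ is itself a martingale precisely when $\partial_t g = -2\Ric$, and in general $dG_\tau$ has a drift term governed by $(\partial_t g + 2\Ric)$ evaluated on the stochastic parallel transport frame. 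This is the path-space incarnation of the parabolic Bochner formula \eqref{parboch}, and deriving it carefully — using the It\^o calculus for the stochastic parallel transport $P_\tau$ constructed in Section \ref{sec_brownian}, and commuting $\nabla^{\p}_\tau$ with the conditional expectation defining $F^\tau$ — is the technical heart of the argument.

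Given that identity, (R1)$\Rightarrow$(R2) follows because $\tau \mapsto \mathbb{E}\,|\nabla^{\p}_\tau F^\tau|$ is then nonincreasing (Jensen plus the martingale property of $G_\tau$ under Ricci flow), so its value at $\tau=0$, which is $|\nabla_x \int F\,d\Gamma_{(x,T)}|$, is dominated by its value as $\tau \to T$, which is $\int |\nabla^{\p}F|\,d\Gamma_{(x,T)}$. For (R2)$\Rightarrow$(R3): apply (R2) to the shifted functional on $P_{T-\tau}\M$ based at $\gamma_\tau$, square it, integrate, and recognize $\frac{d}{d\tau}\mathbb{E}(F^\tau)^2 = \mathbb{E}\frac{d[F^\bullet]_\tau}{d\tau}$ on the left and $2\,\mathbb{E}|\nabla^{\p}_\tau F|^2$ on the right — this is essentially the self-improvement of a linear (S2)-type estimate to a quadratic (S3)-type estimate, now at the level of path space, and uses that the quadratic variation of $F^\bullet$ is computed from the parallel gradient. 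The implication (R3)$\Rightarrow$(R4) is an integration in $\tau$ of the pointwise differential inequality: setting $\phi(a)=a\log a$ and using that $\frac{d}{d\tau}\mathbb{E}\,\phi((F^2)^\tau)$ equals $\mathbb{E}\big[\phi''((F^2)^\tau)\,\frac{d[(F^2)^\bullet]_\tau}{d\tau}\big]$ together with $\phi''(a)=1/a$ and Cauchy–Schwarz $|\nabla^{\p}_\tau (F^2)^\tau|^2 \le 4 (F^2)^\tau |\nabla^{\p}_\tau F^\tau|^2$, one bounds the entropy increment by $4\int_{\tau_1}^{\tau_2}\mathbb{E}|\nabla^{\p}_\tau F|^2\,d\tau$, and the latter integral is exactly $\langle F, \mathcal{L}_{\tau_1,\tau_2}F\rangle$ by the definition of the Ornstein–Uhlenbeck operator as $\nabla^{\cH\ast}\nabla^{\cH}$ restricted to $[\tau_1,\tau_2]$. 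Then (R4)$\Rightarrow$(R5) is the standard linearization: replace $F$ by $1+\eps F$ (after normalizing), expand the entropy functional to second order in $\eps$, and the $\eps^2$-term yields the spectral gap.

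The closing implication (R5)$\Rightarrow$(R1) is where one genuinely uses "many test functions," and I expect it — together with the Bochner identity above — to be the main obstacle. The strategy is to test (R5) with carefully chosen cylinder functions concentrated near a point: take $F$ depending on the curve at two nearby times $\tau$ and $\tau + \eps$, expand both sides in $\eps$, and extract the coefficient that isolates $(\partial_t g + 2\Ric)(v,v)$ for an arbitrary tangent vector $v$ at an arbitrary point of space-time. Roughly, the difference between the spectral-gap bound and its "supersolution" counterpart (S5), evaluated on such two-point functions, produces a quadratic form in $v$ whose defect is a multiple of $(\partial_t g + 2\Ric)(v,v)$; since (R5) forces this defect to have the correct sign for \emph{all} test functions — in particular for $F$ and for $-F$, or for functions probing $v$ and $-v$ — one concludes $(\partial_t g + 2\Ric)(v,v) = 0$ identically, hence $\partial_t g = -2\Ric$. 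The delicate points are: (i) justifying the $\eps$-expansions, which requires the short-time heat kernel asymptotics on the evolving manifold and uniform control from \eqref{tech_ass}; (ii) ensuring the chosen two-point functions actually lie in the domain of $\mathcal{L}_{\tau_1,\tau_2}$; and (iii) checking that the leading-order terms that agree with the supersolution case cancel exactly, so that the Ricci-flow defect appears at the order one can detect. An alternative, cleaner route for this last step is to first prove (R5)$\Rightarrow$(R3)$\Rightarrow\cdots$, i.e.\ show the spectral gap self-improves back up the chain and then read off (R1) from the equality discussion in (R2) or (R3); but either way the key quantitative input is the infinite-dimensional Bochner formula, and once that is in hand the remaining implications are, as the authors note for the finite-dimensional Theorem \ref{thm_supersol}, largely a matter of Jensen, Cauchy–Schwarz, and integration in $\tau$.
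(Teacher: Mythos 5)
Your overall strategy matches the paper's closely: the forward chain driven by a path-space gradient formula (your ``infinite-dimensional Bochner identity''), the quadratic variation computation, and the $\eps$-expansion on carefully chosen cylinder functions to close the loop. Two points deserve correction.

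\smallskip
\noindent\emph{The step (R3)$\Rightarrow$(R4) has a genuine gap.}
You write that one uses the pointwise inequality
$\phi''\bigl((F^2)^\tau\bigr)\tfrac{d[(F^2)^\bullet]_\tau}{d\tau}$ together with the chain rule and Cauchy--Schwarz to bound the entropy increment. But (R3) as stated is an \emph{averaged} inequality,
$\int \tfrac{d[F^\bullet]_\tau}{d\tau}\,d\Gamma \le 2\int |\nabla^\p_\tau F|^2\,d\Gamma$,
and it does not let you bound the \emph{weighted} integral $\int \tfrac{1}{(F^2)^\tau}\tfrac{d[(F^2)^\bullet]_\tau}{d\tau}\,d\Gamma$. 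You need a pointwise (conditional) version. Indeed, when applying (R2) to the shifted functional $F_{\gamma[0,\tau]}$ you actually obtain the pointwise estimate
\begin{equation*}
\sqrt{\tfrac{d[F^\bullet]_\tau}{d\tau}}(\gamma)\;\leq\;\sqrt{2}\,E_{(x,T)}\bigl[\,|\nabla^\p_\tau F|\,\big|\,\Sigma^\tau\bigr](\gamma),
\end{equation*}
and it is this sharper version --- not (R3) --- that yields the log-Sobolev inequality after Cauchy--Schwarz gives $\tfrac{d[(F^2)^\bullet]_\tau}{d\tau}\le 8(F^2)^\tau\,E[|\nabla^\p_\tau F|^2|\Sigma^\tau]$ pointwise, so that the $1/(F^2)^\tau$ weight cancels before integrating. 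The paper makes this explicit by introducing a pointwise estimate (R3') and proving (R2)$\Rightarrow$(R3')$\Rightarrow$(R4), with (R3) obtained separately as a consequence. As written, ``(R2)$\Rightarrow$(R3)$\Rightarrow$(R4)'' is not a valid chain; you need to carry the pointwise version through.

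\smallskip
\noindent\emph{The heuristic ``$\nabla^\p_\tau F^\tau$ is a martingale'' does not parse as stated.}
Since $F^\tau=E[F|\Sigma^\tau]$ depends only on $\gamma|_{[0,\tau]}$ while $\nabla^\p_\tau$ differentiates by parallel variations supported on $[\tau,T]$, one has $\nabla^\p_\tau F^\tau\equiv 0$. What you presumably mean is that the stochastically parallel-transported gradient of the heat-semigroup-averaged functional, $N_\tau=P_\tau\,\mathrm{grad}_y\,E_{(y,T-\tau)}F_{\gamma[0,\tau]}\big|_{y=\pi_1\gamma_\tau}$, is a martingale exactly under Ricci flow; with a general evolution there is a drift governed by $\Ric+\tfrac12\partial_tg$. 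That is exactly the content of the paper's gradient formula
$\mathrm{grad}_{g_T}E_{(x,T)}F = E_{(x,T)}[\nabla^\p F + \int_0^T \dot R_\tau \nabla^\p_\tau F\,d\tau]$,
which is proven by induction on the order of the cylinder function via the Feynman--Kac formula, not by directly verifying a martingale property. Your framing and the paper's are equivalent in spirit, but you should reformulate the object whose martingale property carries the information.

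\smallskip
\noindent\emph{On closing the loop.} Your direct (R5)$\Rightarrow$(R1) via $\eps$-expansion of two-point cylinder functions in the spectral-gap inequality is plausible but would require carrying the Ornstein--Uhlenbeck pairing and the martingale increment through short-time heat-kernel asymptotics simultaneously, which is awkward. The paper routes (R5)$\Rightarrow$(R3)$\Rightarrow$(R2')$\Rightarrow$(R1): the first two are essentially formal (divide by $\tau_2-\tau_1$ and let $\tau_2\to\tau_1$; evaluate the quadratic variation formula at $\tau=0$), and then the two-point expansion is done once, at the level of the $L^2$ gradient estimate (R2'), using the gradient formula to extract the $(\Ric+\tfrac12\partial_tg)(v,v)$ defect. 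You correctly identify this as the ``cleaner route.'' Also note (R2) uses $\Gamma_T$ while (R3)--(R5) use $\Gamma_{(x,T)}$; be careful, when approximating general $F\in L^2$ by cylinder functions, that the pointwise statements hold for a.e.\ $(x,T)$.

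Aside from these points --- principally the missing pointwise version of (R3) needed for the log-Sobolev step, and the imprecise formulation of the martingale object --- your proposal captures the mechanism of the paper's proof.
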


\begin{remark}
As explained above, in the special case of 1-point cylinder functions the estimates (R2)--(R5) reduce to the estimates (S2)--(S5), respectively.
\end{remark}

\begin{remark}
Further characterizations are possible. In particular, we have an $L^2$-version of the gradient estimate, and a pointwise $L^1$-version of the martingale estimate, see (R2') and (R3') in Section \ref{sec_proof_main}.
\end{remark}

\noindent\textbf{Outline.} This article is organized as follows. In Section \ref{sec_supersol}, as a warmup for the proof of the main theorem, we prove Theorem \ref{thm_supersol} characterizing supersolutions of the Ricci flow. In Section \ref{sec_prelim}, we set up the machinery of stochastic analysis in our setting of evolving manifolds. In Section \ref{sec_proof_main}, we prove the main theorem (Theorem \ref{main_thm_intro}) characterizing solutions of the Ricci flow.

\section{Supersolutions of the Ricci flow}\label{sec_supersol}

In this short section we prove Theorem \ref{thm_supersol}, characterizing supersolutions of the Ricci flow

\begin{proof}[Proof of Theorem \ref{thm_supersol}] We will prove the implications (S3)$\Leftrightarrow$(S1)$\Leftrightarrow$(S2) and (S1)$\Rightarrow$(S4)$\Rightarrow$(S5)$\Rightarrow$(S3).

(S1)$\Leftrightarrow$(S3): If $g_t$ is a supersolution of the Ricci flow, then the Bochner formula (\ref{parboch}) implies
\begin{equation}
\Box \abs{\D P_{st}u}^2\leq 0.
\end{equation}
Thus, $\abs{\D P_{st} u}^2- P_{st}\abs{\D u}^2$ is a subsolution of the heat equation. Since it is zero for $t=s$, it stays nonpositive for all $t>s$, in particular $\abs{\D P_{sT} u}^2\leq P_{sT}\abs{\D u}^2$.
To prove the converse implication, assume that $(\partial_tg+2\Rc)(X,X)<0$ for some unit tangent vector $X\in T_xM$ at some time $s$. Choose a test function $u$ with $\D u(p)=X$ and $\D^2 u(p)=0$. Then by (\ref{parboch}) we have $\partial_t\abs{\D P_{st}u}^2>\Lap \abs{\D u}^2$ at $p$ at $t=s$; this contradicts (S3).

(S1)$\Leftrightarrow$(S2): If $g_t$ is a supersolution of the Ricci flow, then using the Bochner formula (\ref{parboch}) and the Cauchy-Schwarz inequality we obtain
\begin{align}
\Box \abs{\D P_{st} u} &= \frac{1}{\abs{\D P_{st} u}}\left(\frac{1}{2}\Box \abs{\D P_{st} u}^2+\frac{1}{4}\frac{\abs{\D \abs{\D P_{st} u}^2}^2}{\abs{\D P_{st} u}^2}\right)\leq 0.
\end{align}
Thus, $\abs{\D P_{st} u}- P_{st}\abs{\D u}$ is a subsolution of the heat equation. Since it is zero for $t=s$, it stays nonpositive for all $t>s$, in particular $\abs{\D P_{sT} u}\leq P_{sT}\abs{\D u}$. The converse implication follows by considering a test function as above.

(S1)$\Rightarrow$(S4): Let $w>0$. We start by deriving another estimate for the heat equation. Using the Bochner formula (\ref{parboch}) and the Peter-Paul inequality we compute
\begin{align}
\Box \left(\frac{\abs{\D P_{st} w}^2}{P_{st}w}\right) = \frac{\Box\abs{\D P_{st}w}^2}{P_{st}w}+2\frac{\ip{\D \abs{\D P_{st}w}^2,\D P_{st}w}}{(P_{st}w)^2}-2\frac{\abs{\D P_{st}w}^4}{(P_{st}w)^3}\leq 0.
\end{align}
Thus, $\frac{\abs{\D P_{st} w}^2}{P_{st}w}-P_{st}\frac{\abs{\D w}^2}{w}$ is a subsolution of the heat equation.
Since it is zero for $t=s$, this implies the estimate
\begin{equation}\label{app_frac}
\frac{\abs{\D P_{sr} w}^2}{P_{sr}w}\leq P_{sr}\frac{\abs{\D w}^2}{w}.
\end{equation}
Now, using the heat kernel homotopy principle \cite[(3.7)]{HN_eps} and \eqref{app_frac} we compute
\begin{equation}
 \int w\log w \, d\nu-\left(\int w\, d\nu\right) \log\left( \int w\, d\nu\right)=\int_s^T \left(P_{rT}\frac{\abs{\D P_{sr} w}^2}{P_{sr}w}\right) (x)\, dr\leq (T-s)\int \frac{\abs{\D w}^2}{w}\, d\nu.
\end{equation}
Substituting $w=u^2$ this implies the log-Sobolev inequality (S4).

(S4)$\Rightarrow$(S5):
This follows by evaluating (S4) for $w^2=1+\varepsilon u$ with $\int u\, d\nu=0$.

(S5)$\Rightarrow$(S3)
By the heat kernel homotopy principle \cite[(3.7)]{HN_eps} we have
\begin{equation}
 \int u^2 \, d\nu-\left(\int u d\nu\right)^2 =2\int_s^T \left( P_{rT}\abs{\D P_{sr}u}^2\right) (x)\, dr.
\end{equation}
Thus, if (S3) fails at some $(x,T)$, then (S5) fails for $d\nu_{(x,T)}$ with $\abs{T-s}$ small enough.
\end{proof}

\section{Stochastic calculus on evolving manifolds}\label{sec_prelim}

We will now discuss in more detail the required background from stochastic analysis, adapted to our time-dependent setting.
There are numerous excellent references for stochastic analysis on manfolds, e.g. \cite{Elworthy,Emery,Hsu,IkedaWatanabe,Malliavin,Stroock}.
For readers who wish to focus on one single reference which is particularly close in spirit to the content of the present section we recommend the book by Hsu \cite{Hsu}.

\subsection{Frame bundle on evolving manifolds}\label{sec_diffgeo}

To set things up efficiently, we will first explain how to formulate the differential geometry of evolving manifolds in terms of the frame bundle.
For the frame bundle formalism in the time-independent case, see e.g. Kobayashi-Nomizu \cite{KobNom}, for the frame bundle formalism for the Ricci flow, see Hamilton \cite{Hamilton_Harnack}.\\

Let $(M,g_t)_{t\in I}$, $I=[0,T_1]$, be a smooth family of Riemannian manifolds, and write $\M=M\times I$. Let $Y$ be a time dependent vector field.
For each $X\in (T_{x}M,g_t)$ we can compute the covariant spatial derivative $\nabla_X Y=\nabla^{g_t}_X Y$ using the Levi-Civita connection of the metric $g_t$.
The covariant time derivative is defined as $\nabla_t Y=\partial_t Y+\frac{1}{2}\partial_t g_t(Y,\cdot)^{\sharp_{g_t}}$.
The point is that this gives metric compatibility, namely $\frac{d}{dt}\abs{Y}_{g_t}^2=2\langle Y,\nabla_t Y\rangle$.

Consider the $O_n$-bundle $\pi: \F\to \M$, where the fibres $\F_{(x,t)}$ are given by the orthogonal maps $u: \dR^n\to (T_xM,g_t)$, and $g\in O_n$ acts from the right via composition. 
The horizontal lift of a curve $\gamma_t$ in $\M$ is a curve $u_t$ in $\F$ with $\pi u_t=\gamma_t$ such that $\nabla_{\dot \gamma_t}(u_te)=0$ for all $e\in\dR^n$. 
Given a vector $\alpha X+\beta \partial_t\in T_{(x,t)}\M$ and a frame $u\in \F_{(x,t)}$ there is a unique horizontal lift $\alpha X^*+\beta D_t$ with $\pi_* (\alpha X^*+\beta D_t)=X$. Here, $X^*$ is just the horizontal lift of $X\in T_xM$ with respect to the fixed metric $g_t$, and $D_t=\frac{d}{ds}|_0 u_s$, where $u_s$ is the horizontal lift based at $u$ of the curve $s\mapsto (x,t+s)$ with $x$ constant.
Most of the time we only consider curves of the form $\gamma_\tau=(x_\tau,T-\tau)$.
We denote space-time parallel transport by $P_{\tau_1,\tau_2}=u_{\tau_2}u_{\tau_1}^{-1}:(T_{x_{\tau_1}}M,g_{T-\tau_1})\to (T_{x_{\tau_2}}M,g_{T-\tau_2})$,
and observe that this induces parallel translation maps for arbitrary tensor fields. We write $D_\tau=-D_t$.

Given a representation $\rho$ of $O_n$ on some vector space $V$ and an equivariant map from $\F$ to $V$, we get a section of the associated vector bundle $\F\times_\rho V$, and vice versa.
For example, a time-dependent function $f$ corresponds to the invariant function $\tilde{f}=f\pi:\F\to\dR$, and a time-dependent vector field $Y$ corresponds to a function $\tilde{Y}:\F\to \dR^n$ via $\tilde{Y}(u)=u^{-1}Y_{\pi u}$, which is equivariant in the sense that $\tilde{Y}(ug)=g^{-1}\tilde{Y}(u)$.
The following lemma shows how to compute derivatives in terms of the frame bundle.

\begin{lemma}[First derivatives]\label{lemma_firstder}
$\widetilde{Xf}=X^*\tilde f$, $\widetilde{\partial_t f}=D_t\tilde{f}$, $\widetilde{\nabla_X Y}=X^*\tilde{Y}$, and $\widetilde{\nabla_t Y}=D_t\tilde{Y}$.
\end{lemma}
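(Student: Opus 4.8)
The four identities fall into two families --- one for scalars, one for vector fields --- and within each family the spatial version (involving $X^*$) and the temporal version (involving $D_t$) are proved in the same way. The lemma is a direct unwinding of the definitions, once one recalls the standard dictionary between horizontal lifts on the frame bundle and covariant differentiation; the only point requiring any care is that in the time direction the relevant connection is the space-time connection \eqref{eq_spacetime_connection}.

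I would first dispose of the scalar identities. Since $\tilde f=f\circ\pi$, for any $u\in\F_{(x,t)}$ we have $X^*\tilde f(u)=X^*_u(f\circ\pi)$, which equals $(Xf)(x,t)$ because $\pi_\ast X^*=X$ by construction; hence $X^*\tilde f=\widetilde{Xf}$. The same reasoning with $\pi_\ast D_t=\partial_t$ gives $D_t\tilde f=\widetilde{\partial_t f}$.

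For the vector-field identities the key input is the elementary fact that if $s\mapsto u_s$ is a horizontal curve in $\F$ lying over a curve $s\mapsto\gamma_s$ in $\M$, then the frame $s\mapsto u_se_i$ is parallel along $\gamma_s$ for the relevant connection, and consequently for any vector field $Y$ one has $u_s^{-1}\tfrac{D}{ds}Y_{\gamma_s}=\tfrac{d}{ds}\bigl(u_s^{-1}Y_{\gamma_s}\bigr)$; this is seen by expanding $Y_{\gamma_s}$ in the frame $\{u_se_i\}$ and using that the frame is parallel. To prove $\widetilde{\nabla_XY}=X^*\tilde Y$ at a frame $u\in\F_{(x,t)}$, take a curve $\gamma_s$ in $M$ at the fixed time $t$ with $\gamma_0=x$ and $\dot\gamma_0=X$, and let $u_s$ be its horizontal lift for the Levi-Civita connection of $g_t$ with $u_0=u$; then, by the definitions of $\tilde Y$ and of the horizontal lift, $X^*\tilde Y(u)=\tfrac{d}{ds}\big|_{s=0}\bigl(u_s^{-1}Y_{\gamma_s}\bigr)=u^{-1}\nabla_XY=\widetilde{\nabla_XY}(u)$. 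For $\widetilde{\nabla_tY}=D_t\tilde Y$ one repeats this verbatim with the curve $\gamma_s=(x,t+s)$ and with $u_s$ its horizontal lift for the space-time connection \eqref{eq_spacetime_connection}; here the metric-compatibility identity $\tfrac{d}{dt}\abs{Y}_{g_t}^2=2\langle Y,\nabla_tY\rangle$ is exactly what ensures that the lift stays inside the $O_n$-bundle $\F$, so that $\{u_se_i\}$ really is a parallel orthonormal frame and the fact above applies, yielding $D_t\tilde Y(u)=\tfrac{d}{ds}\big|_{s=0}\bigl(u_s^{-1}Y_{(x,t+s)}\bigr)=u^{-1}\nabla_tY$.

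The only genuinely non-routine point is this last temporal vector-field identity: one must observe that the non-standard covariant time derivative $\nabla_t$ of \eqref{eq_spacetime_connection} is precisely the one making the frame-bundle picture consistent. Since this is built into the construction through metric compatibility, there is in fact no real obstacle --- the lemma is a bookkeeping statement whose purpose is to set up the stochastic constructions of the following subsections. For the time-independent correspondence between horizontal lifts and $\nabla$ I would cite Kobayashi--Nomizu \cite{KobNom}, and for its space-time version Hamilton \cite{Hamilton_Harnack}.
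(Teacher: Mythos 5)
Your proof is correct and follows essentially the same route as the paper's: the scalar identities via $\pi_\ast X^* = X$ and $\pi_\ast D_t = \partial_t$, and the vector identities by differentiating $u_s^{-1}Y_{\gamma_s}$ along a horizontal curve and identifying this with the covariant derivative. Your remark that metric compatibility of the space-time connection \eqref{eq_spacetime_connection} is what keeps the horizontal lift inside the $O_n$-bundle is a helpful supplementary observation that the paper leaves implicit.
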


\begin{proof}
The first two formulas are obvious, since the horizontal lift of a function is constant in fibre direction. To prove the last formula, let $u_t$ be a horizontal curve with $\pi u_t=\gamma_t=(x,t)$, where $x$ is fixed. Then
\begin{equation}
(D_t\tilde{Y})_{u_{t_1}}=\frac{d}{dt}|_{t_1}\tilde{Y}(u_t)=\frac{d}{dt}|_{t_1} u_t^{-1}Y_{\pi u_t}= u_{t_1}^{-1}\frac{d}{ds}|_0 P_{t_1,t_1+s}^{-1}Y_{(x,t_1+s)}=u_{t_1}^{-1}(\nabla_t Y)_{(x,t_1)}=(\widetilde{\nabla_t Y})_{u_1}.
\end{equation}
The third formula follows from a similar computation. In fact, it is a well known formula from differential geometry with respect to a fixed metric $g_t$.
\end{proof}

Let $e_1,\ldots,e_n$ be the standard basis of $\dR^n$.
We write $H_i$ for the horizontal vector fields $H_i(u)=(ue_i)^*$, where $*$ denotes the horizontal lift, as before. The horizontal Laplacian is defined by  $\Lap_H=\sum_{i=1}^n H_i^2$.
\begin{lemma}[Laplacian]\label{lemma_laplacian}
$\widetilde{\Lap f}=\Lap_H\tilde{f}$, $\widetilde{\Lap Y}=\Lap_H\tilde{Y}$.
\end{lemma}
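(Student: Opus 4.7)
The plan is to reduce both identities to pointwise statements at a fixed frame $u\in \F$, and to compute iterated derivatives along horizontal curves, exploiting the fact that the flow of $H_i$ parallel–translates frames along $M$ (so the ``frame derivative'' vanishes).

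First I would treat the scalar case. Fix $u\in\F_{(x,t)}$ and let $u_s$ be the integral curve of $H_i$ through $u_0=u$; by definition $u_s$ is the horizontal lift of the curve $c(s)=\pi u_s$, with $\dot c(0)=ue_i$, and the vectors $u_s e_j$ are precisely the parallel transports $P_{0,s}(ue_j)$ along $c$. Applying Lemma~\ref{lemma_firstder} once at the point $u_s$ gives
\begin{equation*}
(H_j\tilde f)(u_s)=\widetilde{(u_s e_j)f}(u_s)=(\nabla_{u_s e_j}f)(c(s)).
\end{equation*}
Differentiating in $s$ at $s=0$, the covariant derivative of the parallel vector field $s\mapsto u_s e_j$ vanishes at $s=0$, so only the term differentiating $\nabla f$ survives, yielding
\begin{equation*}
(H_iH_j\tilde f)(u)=\nabla^2 f(ue_i,ue_j).
\end{equation*}
Setting $i=j$ and summing over $i$ gives $(\Lap_H\tilde f)(u)=\mathrm{tr}\,\nabla^2 f\,|_{\pi u}=\widetilde{\Lap f}(u)$.

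For a vector field $Y$ the strategy is identical, but now $\tilde Y$ is $\mathbb{R}^n$-valued. I would express
\begin{equation*}
(H_j\tilde Y)(u_s)=u_s^{-1}\bigl(\nabla_{u_s e_j}Y\bigr)_{c(s)}
\end{equation*}
by Lemma~\ref{lemma_firstder}, and then differentiate at $s=0$. Since $u_s$ is the horizontal lift, the map $u_s^{-1}$ behaves like inverse parallel transport, so $\tfrac{d}{ds}\bigl|_{s=0}u_s^{-1}V(s)=u^{-1}\tfrac{DV}{ds}\bigl|_{0}$ for any vector field $V(s)$ along $c$. Taking $V(s)=\nabla_{u_s e_j}Y$ and using again that $u_se_j$ is parallel along $c$, the only surviving contribution is $u^{-1}\nabla_{ue_i}\nabla_{ue_j}Y$. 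Setting $i=j$ and summing then recovers the trace formula for $\Lap Y=\mathrm{tr}\,\nabla^2 Y$, proving $\Lap_H\tilde Y=\widetilde{\Lap Y}$.

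The main technical point — and the only place where one could make a mistake — is the bookkeeping in the second step: one must correctly separate the differentiation that hits $\nabla Y$ from the (vanishing) one that hits the parallel frame $u_s e_j$, and check that the equivariant identification $u_s^{-1}$ commutes with covariant differentiation along $c$. Both facts are encapsulated in the horizontality of the flow of $H_i$, so the entire argument rests on the single geometric input that the basic horizontal vector fields move frames by space–time parallel transport.
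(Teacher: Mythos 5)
Your proof is correct. The paper itself does not give a computation — it simply observes that the identity is the classical frame-bundle formula for the Laplacian with respect to the fixed metric $g_t$ (the horizontal Laplacian only involves spatial directions, so the time-dependence of $g_t$ plays no role) — and your argument supplies exactly the standard derivation of that classical fact: iterate Lemma \ref{lemma_firstder} along the flow of $H_i$, use that this flow parallel-transports the frame $u_s e_j$, and conclude $(H_iH_j\tilde f)(u)=\nabla^2 f(ue_i,ue_j)$ (resp.\ $u^{-1}\nabla^2 Y(ue_i,ue_j)$) before tracing. The bookkeeping in the vector case — that $u_s^{-1}$ intertwines $\tfrac{d}{ds}$ with the covariant derivative $D/ds$ because $u_s$ is horizontal — is handled correctly.
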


\begin{proof} This is a classical fact from differential geometry with respect to a fixed metric $g_t$.
\end{proof}

We also need the notion of the antidevelopment of a horizontal curve (this concept is also known as Cartan's rolling without slipping), see e.g. \cite{KobNom}, generalized to the time-dependent setting.
The point is that the horizontal vector fields provide a way to identify curves in $\dR^n$ with horizontal curves in $\mathcal{F}$.

\begin{definition}[Antidevelopment] If $\{u_\tau\}_{\tau\in[0,T]}$ is a horizontal curve in $\mathcal{F}$ with $\pi(u_\tau)=(x_\tau,T-\tau)$, its \textbf{antidevelopment} $\{w_\tau\}_{\tau\in[0,T]}$ is the curve in $\dR^n$ that satisfies
\begin{equation}\label{antidevelop}
\frac{du_\tau}{d\tau} =D_\tau+ H_i(u_\tau)\frac{d{w}_\tau^i}{d\tau},\qquad w_0=0.
\end{equation}
\end{definition}

\subsection{Brownian motion and stochastic parallel transport}\label{sec_brownian}

The goal of this section is to generalize the Eells-Elworthy-Malliavin construction of Brownian motion and stochastic parallel translation,
see e.g. \cite{Hsu}, to our setting of evolving manifolds. We note that a related construction in the time-dependent setting has been given by Arnoudon-Coulibaly-Thalmaier \cite{ACT}.\\

The idea is to solve \eqref{antidevelop} in a stochastic setting.
This provides a way to identify Brownian curves  $\{w_\tau\}_{\tau\in[0,T]}$ in $\dR^n$ with horizontal Brownian curves  $\{u_\tau\}_{\tau\in[0,T]}$ in $\mathcal{F}$.
The virtue of this approach is that it yields both Brownian motion on $M$, via projecting, and stochastic parallel transport, via $P_{\tau_1,\tau_2}=u_{\tau_2}u_{\tau_1}^{-1}$.\\

Let $(M,g_t)_{t\in I}$, $I=[0,T_1]$, be a one-parameter family of Riemannian manifolds, and let $\pi: \F\to M\times I$ be the time dependent $O_n$-bundle introduced in the previous section.
We fix a frame $u\in\F$, write $\pi(u)=(x,T)$, and denote the projections to space and time by $\pi_1:\F\to M$ and $\pi_2:\F\to I$, respectively.
It will be convenient to work with the backwards time $\tau$, defined by $t=T-\tau$. As before, we write $D_\tau=-D_t$.

Motivated by (\ref{antidevelop}), we consider the following stochastic differential equation (SDE) on $\F$:
\begin{equation}\label{SDE}
dU_\tau=D_\tau d\tau+ H_i(U_\tau)\circ {dW}_\tau^i\, ,\qquad\qquad U_0=u.
\end{equation}
Here, $W_\tau$ is Brownian motion on $\dR^n$, and $\circ$ indicates that the equation is in the Stratonovich sense.
To keep the factor $2$ in Hamilton's Ricci flow, $\partial_t g_t=-2\Rc_{g_t}$, we use the convention that $dW_\tau$ doesn't have the standard normalization from stochastic calculus, but is scaled by a factor $\sqrt{2}$, i.e. 
$dW_\tau^i dW_\tau^j=2\delta_{ij}d\tau$.

\begin{proposition}[Existence, uniqueness, and Ito formula]\label{prop_SDE}
The SDE \eqref{SDE} has a unique solution $\{U_\tau\}_{\tau\in [0,T]}$. The solution satisfies $\pi_2(U_\tau)=T-\tau$, and does not explode. Moreover, $U_\tau(\omega)$ is continuous in $\tau$ for almost every Brownian path $\omega\in C([0,T],\dR^n)$, and for any $C^2$-function $f:\F\to \dR$ we have the Ito formula
\begin{equation}\label{ito}
df(U_\tau)=H_if(U_\tau)dW^i_\tau+D_\tau f(U_\tau)d\tau+ H_iH_if(U_\tau)d\tau.
\end{equation}
\end{proposition}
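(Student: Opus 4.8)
The plan is to read \eqref{SDE} as an ordinary finite-dimensional Stratonovich SDE on the manifold $\F$. Local existence and uniqueness of a strong solution are then immediate from the classical theory: the vector fields $D_\tau$ and $H_1,\dots,H_n$ are smooth on $\F$, hence locally Lipschitz in any coordinate chart (or after a smooth proper embedding $\F\hookrightarrow\dR^N$), so one obtains a unique maximal solution $\{U_\tau\}$ on a random interval $[0,\zeta)$, with $\tau\mapsto U_\tau$ continuous by the standard continuous-modification statement. It remains to identify the time component, to rule out explosion before time $T$, and to pass to Ito form.

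For the time component: the $H_i$ are horizontal lifts of \emph{spatial} vectors, so $d\pi_2(H_i)\equiv 0$, while $D_\tau=-D_t$ satisfies $d\pi_2(D_\tau)\equiv -1$. Applying the ordinary chain rule to $\pi_2$ along \eqref{SDE} -- there is no Stratonovich correction since $H_i\pi_2\equiv 0$ -- gives $d\,\pi_2(U_\tau)=-d\tau$, hence $\pi_2(U_\tau)=T-\tau$ for $\tau<\zeta$. In particular the trajectory stays in $\pi^{-1}(M\times[0,T])$ on $[0,\min(\zeta,T))$, so \eqref{tech_ass} applies with uniform constants along the whole path.

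The only genuinely nonroutine step, and the one I expect to be the main obstacle, is non-explosion, i.e. $\zeta>T$ almost surely. Here I would project to $M$: by Lemmas \ref{lemma_firstder} and \ref{lemma_laplacian} together with the Ito formula \eqref{ito} below, $X_\tau:=\pi_1(U_\tau)$ is the diffusion on $M$ with time-dependent generator $\Lap_{g_{T-\tau}}$ (with the factor-$\sqrt2$ scaling). By \eqref{tech_ass}, $\abs{\partial_t g_t}\le C$ forces $e^{-CT_1}g_0\le g_t\le e^{CT_1}g_0$, so the metrics $g_t$, $t\in I$, are mutually uniformly equivalent and $\Ric_{g_t}\ge -K$ for a uniform $K$; thus the geometry seen by $X_\tau$ is uniformly controlled. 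A radial-process comparison in the spirit of Hsu \cite{Hsu} then finishes the argument: fix $o\in M$, apply the Ito formula to a smoothing of $\rho_\tau:=\dist_{g_0}(o,X_\tau)$, bound the drift via the Laplacian comparison theorem for $\Lap_{g_{T-\tau}}$ together with the metric equivalence, and dominate $\rho_\tau$ by a one-dimensional diffusion that does not explode; this shows that $X_\tau$ stays inside a fixed geodesic ball on $[0,T]$ with probability tending to $1$ as the radius grows, so $X_\tau$, and hence $U_\tau$ (the fibre $O_n$ being compact), does not reach infinity before time $T$. The cut-locus irregularity of $\rho$ is handled as usual, by Kendall's trick or by exhausting $M$ with geodesic balls and passing to the limit; alternatively one can cite the time-dependent non-explosion results of Arnoudon-Coulibaly-Thalmaier \cite{ACT} or Kuwada-Philipowski \cite{KuwadaPhilip}. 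The moving metric -- comparing distances and Laplacians at different times -- is precisely what makes this step more delicate than in the static case.

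For the Ito formula \eqref{ito}, convert \eqref{SDE} to Ito form in a chart on $\F$. For $f\in C^2(\F)$ the ordinary chain rule gives $df(U_\tau)=D_\tau f(U_\tau)\,d\tau+H_if(U_\tau)\circ dW_\tau^i$, and the Stratonovich correction of the last term is $\tfrac12\,d[H_if(U_\cdot),W^i]_\tau$. Since the martingale part of $d(H_if)(U_\tau)$ is $H_jH_if(U_\tau)\,dW_\tau^j$ and the scaling convention gives $dW_\tau^i\,dW_\tau^j=2\delta_{ij}\,d\tau$, this correction equals $\sum_iH_iH_if(U_\tau)\,d\tau$, the $\tfrac12$ and the $2$ cancelling; adding the two first-order terms yields exactly \eqref{ito}, and the resulting expression is visibly chart-independent. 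Together with the continuity already noted, this completes the proof.
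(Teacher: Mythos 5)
Your proposal is correct and, for existence, uniqueness, continuity, the identification $\pi_2(U_\tau)=T-\tau$, and the Stratonovich-to-Ito conversion, follows essentially the same route as the paper (the paper reduces to Euclidean SDEs via an embedding $\F\subset\dR^N$ and uses Gronwall to stay on $\F$, whereas you work in charts on $\F$ directly; these are interchangeable). The one place you genuinely diverge is non-explosion, and your route is more laborious than the paper's. You propose the classical radial-process comparison with $\rho_\tau=\dist_{g_0}(o,X_\tau)$, which forces you to deal with the cut-locus nonsmoothness of the distance function (Kendall's trick, exhaustion by geodesic balls, or citing \cite{ACT}, \cite{KuwadaPhilip}). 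The paper instead invokes the existence, under the bounded-geometry hypothesis \eqref{tech_ass}, of a smooth distance-like function $r:M\to\dR$ comparable to $1+d_t(\cdot,o)$ with $\abs{\nabla r}\le C$ and $\nabla\nabla r\le C$; lifting $r$ to a time- and fibre-independent function $\tilde r$ on $\F$ and applying the Ito formula \eqref{ito} directly to $\tilde r$ gives a one-dimensional process with uniformly bounded drift and bounded diffusion coefficient, which manifestly does not explode, with no cut-locus issues at all. This is cleaner but relies on a nontrivial geometric input (existence of smooth exhaustion functions with bounded gradient and Hessian on complete manifolds of bounded geometry); your approach is more elementary in its ingredients but costs you the technical overhead of regularizing the distance function. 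Both are valid; you should either be prepared to carry out the cut-locus bookkeeping or note, as the paper implicitly does, that bounded geometry supplies a smooth distance-like function and short-circuits that step.
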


\begin{proof}
We recall that SDEs on manifolds can be reduced to SDEs on Euclidean space, see e.g. \cite[Sec. 1.2]{Hsu}. 
Choose an embedding $\F\subset \dR^N$ and suitable extensions of all functions to $\dR^N$. 
By the standard theory of SDEs on Euclidean space, there is a unique solution of the system ($a=1,\ldots, N$):
\begin{equation}\label{SDE_eucl}
dU^a_\tau=D^a_\tau d\tau+ H^a_i(U_\tau)\circ {dW}_\tau^i\, ,\qquad\qquad U_0=u.
\end{equation}
It follows from a Gronwall type argument that the solution actually stays inside $\F$, see e.g. \cite[Prop. 1.2.8]{Hsu}. This proves existence of a solution of \eqref{SDE}.
Moreover, it is also easy to derive a uniqueness result for solutions of \eqref{SDE} from the standard uniqueness result for SDEs on Euclidean space, see e.g. \cite[Thm. 1.2.9]{Hsu}.
In particular, the solution is independent of the choices of embedding and extensions.
Since Brownian motion in $\mathbb{R}^n$ is continuous in $\tau$ for almost every path, the same is true for $U_\tau$.

To prove \eqref{ito}, we first convert \eqref{SDE_eucl} into a SDE in the Ito sense. Computationally this is done by dropping the $\circ$ and adding one half times the quadratic variation of $H(U_\tau)$ and $W_\tau$:
\begin{equation}\label{SDE_ito}
dU^a_\tau=D^a_\tau d\tau+ H^a_i(U_\tau) {dW}_\tau^i+\tfrac{1}{2}dH_{i}^a(U_\tau)dW_\tau^i\, ,\qquad\qquad U_0=u.
\end{equation}
Now, using Ito calculus in Euclidean space we compute
\begin{equation}
dH_{i}^a(U_\tau)dW_\tau^i=\partial_bH_{i}^a(U_\tau)dU_\tau^bdW_\tau^i=2\partial_bH_{i}^a(U_\tau)H_i^b(U_\tau)d\tau,
\end{equation}
and
\begin{align}
df(U_\tau)=&\partial_af(U_\tau)dU_\tau^a+\tfrac{1}{2}\partial_a\partial_bf(U_\tau)dU_\tau^adU_\tau^b\nonumber\\
=&\partial_af(U_\tau)D^a_\tau d\tau+\partial_af(U_\tau)H^a_i(U_\tau) {dW}_\tau^i\\
&+\left(\partial_af(U_\tau)\partial_bH_{i}^a(U_\tau)H_i^b(U_\tau)+\partial_a\partial_bf(U_\tau)H^a_i(U_\tau)H^b_i(U_\tau)\right)d\tau.\nonumber
\end{align}
Observing that the term in brackets is equal to $H_iH_if(U_\tau)$, this proves \eqref{ito}.

By assumption \eqref{tech_ass} the metrics are equivalent at all times and there exists a distance-like function, i.e. a smooth function $r:M\to \mathbb{R}$ such that, after fixing an arbitrary point and $o\in M$,
\begin{equation}
 C^{-1}(1+d_t(x,o))\leq r(x)\leq C(1+d_t(x,o)),\qquad \abs{\D r} \leq C,\qquad \D\D r\leq C
\end{equation}
for some $C<\infty$. Let $\tilde{r}:\F\to \mathbb{R}$ be the extension of $r$, that is independent of time and the fibre coordinates. Applying the Ito formula \eqref{ito} to $\tilde{r}$, we see that the solution of \eqref{SDE_eucl} does not explode, i.e. that $U_\tau$ does not escape to spatial infinity.
Finally, for $f=\pi_2$ the Ito formula takes the simple form $d\pi_2(U_\tau)=-d\tau$.  Together with $\pi_2(U_0)=T$, this implies that $\pi_2(U_\tau)=T-\tau$.
\end{proof}

Using Propositon \ref{prop_SDE} we can now define Brownian motion and stochastic parallel transport on our evolving family of Riemannian manifolds.

\begin{definition}[Brownian motion]\label{def_brownian_motion} We call $\pi(U_\tau)=(X_\tau,T-\tau)$ \textbf{Brownian motion} based at $(x,T)$.
\end{definition}

\begin{definition}[Stochastic parallel transport]\label{def_stoch_par} The family of isometries
$P_\tau=U_0U_\tau^{-1}: (T_{X_{\tau}}M,g_{T-\tau})\to (T_{x}M,g_T)$, depending on $\tau$ and the Brownian curve, is called \textbf{stochastic parallel transport}.
\end{definition}

Brownian motion comes naturally with its path space, diffusion measure, and filtered $\sigma$-algebra. 

\begin{definition}[Based path spaces]
We let $P_0\dR^n$ be the space of continuous curves $\{\omega_\tau\}_{\tau\in[0,T]}$ in $\dR^n$ with $\omega_0=0$,
let $P_u\F$ be the space of continuous curves $\{u_\tau\}_{\tau\in[0,T]}$ in $\F$ with $u_0=u$ and $\pi_2(u_\tau)=T-\tau$,
and let $P_{(x,T)}\M$ be the space of continuous curves $\{\gamma_\tau=(x_\tau,T-\tau)\}_{\tau\in[0,T]}$ in $\M$ with $\gamma_0=(x,T)$.
\end{definition}

To introduce the diffusion measure, note that Proposition \ref{prop_SDE} defines a map $U:P_0\dR^n\to P_u\F$, $U(\omega)(\tau)=U_\tau(\omega)$.
We also have a natural map $\Pi:P_u\F\to P_{(x,T)}\M$, induced by the projection $\pi:\F\to M\times I$.

\begin{definition}[Diffusion measures]
Let $\Gamma_0$ be the Wiener measure on $P_0\dR^n$, let $\Gamma_u=U_\ast \Gamma_0$ be the probability measure on $P_u\F$ obtained by pushing forward via $U$, 
and let $\Gamma_{(x,T)}=(\Pi\circ U)_\ast \Gamma_0$ be the probability measure on $P_{(x,T)}\M$ obtained by pushing forward via $\Pi\circ U$.
\end{definition}

Finally, recall the Wiener space $P_0\dR^n$ comes naturally equipped with a filtered family of $\sigma$-algebras $\Sigma^\tau=\Sigma^\tau({P_0\dR^n})$, which is generated by the evaluation maps $e_{\tau_1}:P_0\dR^n\to \dR^n$,
$e_{\tau_1}(\omega)=\omega_{\tau_1}$ with $\tau_1\leq \tau$.

\begin{definition}[Filtered $\sigma$-algebras]\label{def_filtered}
We denote by $\Sigma^\tau(P_u\F)$ and $\Sigma^\tau(P_{(x,T)}\M)$ (or simply by $\Sigma^\tau$ if there is no risk of confusion) the pushforward of $\Sigma^\tau({P_0\dR^n})$ under the maps $U$ and $\Pi \circ U$, respectively.
\end{definition}

\subsection{Conditional expectation and martingales}\label{sec_condexp}

If $F:P_u\F\to\dR$ is integrable, we write $E_{u}[F]=\int_{P_u\F} F d\Gamma_u$ for its \textbf{expectation}.
More generally, if $\sigma\in[0,T]$, we write $F^\sigma=E_{u}[F|\Sigma^\sigma]$ for the \textbf{conditional expectation} given the $\sigma$-algebra $\Sigma^\sigma$ (see Definition \ref{def_filtered}).
We recall that the conditional expectation $F^\sigma$ is the unique $\Sigma^\sigma$-measurable function such that 
$\int_\Omega F^\sigma d\Gamma_u =\int_\Omega F d\Gamma_u$
for all $\Sigma^\sigma$-measurable sets $\Omega$.
Similarly, if $F$ is an integrable function on $P_{(x,T)}\M$, we also write $E_{(x,T)}[F]$ and $F^\sigma=E_{(x,T)}[F|\Sigma^\sigma]$ for its expectation and conditional expectation, respectively.

\begin{proposition}[Conditional expectation]\label{prop_condexp}
If $F:P_{(x,T)}\M\to\dR$ is integrable and $\sigma\in[0,T]$, then for a.e. Brownian curve $\{\gamma_\tau\}_{\tau\in [0,T]}$ the conditional expectation $F^\sigma=E_{(x,T)}[F|\Sigma^\sigma]$ is given by the formula
\begin{equation}\label{eq_condexp}
F^\sigma(\gamma)=\int_{P_{T-\sigma}\M}F(\gamma|_{[0,\sigma]}\ast \gamma')\, d\Gamma_{\gamma_\sigma}(\gamma'), 
\end{equation}
where the integral is over all Brownian curves $\{\gamma'_\tau=(x'_\tau,T-\sigma-\tau)\}_{\tau\in [0,T-\sigma]}$ based at $\gamma_\sigma=(x_\sigma,T-\sigma)$ with respect to the measure $\Gamma_{\gamma_\sigma}$,
and $\gamma|_{[0,\sigma]}\ast \gamma'\in P_{(x,T)}\M$ denotes the concatenation of $\gamma|_{[0,\sigma]}$ and $\gamma'$.
\end{proposition}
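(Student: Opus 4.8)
The plan is to reduce the statement to the corresponding Markov property for Brownian motion on Euclidean space, where the analogous formula is classical, and then transport it through the maps $U$ and $\Pi$ that define the diffusion measures and filtrations. First I would recall that by construction $\Gamma_{(x,T)} = (\Pi \circ U)_\ast \Gamma_0$ and that $\Sigma^\sigma(P_{(x,T)}\M)$ is the pushforward of the Wiener filtration $\Sigma^\sigma(P_0\dR^n)$. Thus $F^\sigma$, as an abstract conditional expectation, pulls back to the conditional expectation on Wiener space of the function $\tilde F = F \circ \Pi \circ U : P_0\dR^n \to \dR$ with respect to $\Sigma^\sigma(P_0\dR^n)$. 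So it suffices to identify this Wiener-space conditional expectation with the right-hand side of \eqref{eq_condexp}, pulled back appropriately.

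The key step is the Markov property of the SDE \eqref{SDE}, i.e. the flow property: if $U_\tau(\omega; u)$ denotes the solution of \eqref{SDE} started at $u$ driven by $\omega$, then for $\tau \geq \sigma$ one has $U_\tau(\omega; u) = U_{\tau - \sigma}(\theta_\sigma \omega; U_\sigma(\omega; u))$, where $\theta_\sigma$ is the shift that produces from $\omega$ the increment path $\{\omega_{\sigma + s} - \omega_\sigma\}_{s \in [0, T - \sigma]}$. This follows from uniqueness of solutions (Proposition \ref{prop_SDE}) together with the fact that the shifted increments $\theta_\sigma \omega$ again form a Brownian path on $\dR^n$, independent of $\Sigma^\sigma(P_0\dR^n)$, with its own Wiener measure $\Gamma_0$. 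Concretely, I would split an arbitrary path $\gamma \in P_{(x,T)}\M$ as $\gamma|_{[0,\sigma]} \ast \gamma'$: the first piece is $\Sigma^\sigma$-measurable, and conditionally on it (hence on $\gamma_\sigma = (x_\sigma, T-\sigma)$), the second piece $\gamma'$ is distributed exactly as Brownian motion based at $(x_\sigma, T-\sigma)$ on $P_{T-\sigma}\M$ with measure $\Gamma_{\gamma_\sigma}$ — this is precisely the flow property combined with independence of the shifted Brownian increments. Averaging $F(\gamma|_{[0,\sigma]} \ast \gamma')$ over $\gamma'$ then gives a $\Sigma^\sigma$-measurable function, and by construction it satisfies the defining identity $\int_\Omega F^\sigma \, d\Gamma_{(x,T)} = \int_\Omega F \, d\Gamma_{(x,T)}$ for all $\Sigma^\sigma$-measurable $\Omega$; this last verification is just Fubini applied to the decomposition of $\Gamma_{(x,T)}$ as an iterated integral over the $[0,\sigma]$-part and the $[\sigma, T]$-part.

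The main obstacle is making precise the sense in which "$\gamma'$ is distributed as $\Gamma_{\gamma_\sigma}$" when the basepoint $\gamma_\sigma$ is itself random: one needs a measurable family $\{\Gamma_{(y,s)}\}$ (a regular conditional distribution / disintegration), and one must check that the concatenation map $(\gamma|_{[0,\sigma]}, \gamma') \mapsto \gamma|_{[0,\sigma]} \ast \gamma'$ and the evaluation $\gamma \mapsto \gamma_\sigma$ are measurable so that the right-hand side of \eqref{eq_condexp} is a genuine $\Sigma^\sigma$-measurable function. On Euclidean Wiener space these measurability and disintegration facts are standard; the content here is simply to carry them through the continuous maps $U$ and $\Pi$, using that $U$ is defined $\Gamma_0$-a.e. by Proposition \ref{prop_SDE} and depends on $\omega$ only through $\omega|_{[0,\tau]}$ up to time $\tau$ (adaptedness of the SDE solution). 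I would note that \eqref{eq_condexp} is then asserted only for a.e. $\gamma$, which is exactly the almost-everywhere uniqueness inherent in conditional expectations, so no further care beyond the a.e. well-definedness of $U$ is needed.
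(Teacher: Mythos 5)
Your proposal is correct, but it takes a genuinely different route from the paper's proof. The paper first observes that Proposition \ref{prop_SDE} makes the martingale problem for the SDE \eqref{SDE} well posed, then invokes the Stroock--Varadhan theorem to get the (strong) Markov property for the frame-bundle diffusion, pushes this forward via $\pi$ to obtain \eqref{eq_condexp} for $1$-point cylinder functions, extends to $k$-point cylinder functions by conditioning at the first evaluation time and inducting, and finally passes to general integrable $F$ by density of cylinder functions. You instead work directly on flat Wiener space: you pull $F$ back along $\Pi\circ U$, establish the Markov property via the cocycle/flow identity $U_\tau(\omega;u)=U_{\tau-\sigma}(\theta_\sigma\omega;U_\sigma(\omega;u))$ (which rests on pathwise uniqueness and adaptedness of the SDE solution) together with independence of the shifted Brownian increments from $\Sigma^\sigma$, and then transport the resulting disintegration back to $P_{(x,T)}\M$. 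Both are standard and valid proofs of the Markov property. The paper's route outsources the hard probabilistic content to Stroock--Varadhan and keeps the bookkeeping elementary by restricting to cylinder functions before a density argument, whereas your route is more self-contained at the level of the flow but shifts the burden to the measurability, disintegration, and essential injectivity of $\Pi\circ U$ on filtrations, which you correctly flag as the technical obstacles to be verified; a fully detailed version of your argument should in particular note that $\Pi\circ U$ is (a.e.) bijective onto its image --- horizontal lift from a fixed initial frame is unique, and the antidevelopment recovers $\omega$ --- so that $\Sigma^\sigma(P_{(x,T)}\M)$-measurable functions correspond exactly to $\Sigma^\sigma(P_0\dR^n)$-measurable ones, making the pullback identification of conditional expectations airtight.
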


\begin{proof}
Using Proposition \ref{prop_SDE} we see that the martingale problem for \eqref{SDE} is well posed.
Thus, by the Stroock-Varadhan principle, c.f. \cite[Thm. 10.1.1]{StroockVaradhan},  we have the strong Markov-property
\begin{equation}\label{eq_strong_markov}
E_{u}[f(U_{\sigma+\tau}^u)|\Sigma^\sigma]=E_{U_\sigma^u}[f(U^{U_\sigma^u}_\tau)]
\end{equation}
for all test functions $f:\mathcal{F}\to \dR$ and all stopping times $\sigma\leq T$,
where $\{U_\tau^{u_0}\}_{\tau\in[0,\pi_2(u_0)]}$ denotes the solution of \eqref{SDE} with initial condition $u_0$.
Pushing forward via $\pi:\mathcal{F}\to\M$, and choosing $\sigma$ constant, equation \eqref{eq_strong_markov} implies
\begin{equation}\label{eq_strong_markov2}
E_{(x,T)}\left[f\left(X_{\sigma+\tau}^{(x,T)}\right)\left|\right.\Sigma^\sigma\right]=E_{\left(X^{(x,T)}_{\sigma},T-\sigma\right)}\left[f\left(X^{\left({X^{(x,T)}_{\sigma}},T-\sigma\right)}_\tau\right)\right]
\end{equation}
for all test functions $f:M\to \dR$. Note that equation \eqref{eq_strong_markov2} is exactly equation \eqref{eq_condexp} for the case that $F$ is the $1$-point cylinder function $f\circ u_{\sigma+\tau}$.\footnote{If $F=f\circ u_{\sigma'}$ is a 1-point cylinder function with $\sigma'\leq\sigma$, then \eqref{eq_condexp} holds true trivially.}
Now, if $F$ is a $k$-point cylinder function, then by conditioning at the first evaluation time we can split up the computation of its (conditional) expectation to computing an expectation of a $1$-point cylinder function and of a $(k-1)$-point cylinder function. Arguing by induction, we infer that \eqref{eq_condexp} holds for all cylinder functions.
Since the cylinder functions are dense in the space of all integrable functions, c.f. Definition \ref{def_filtered}, this proves the proposition.
\end{proof}

For any $F\in L^1(P_T\M,\Gamma_{(x,T)})$, the \textbf{induced martingale} $F^\tau=E_{(x,T)}[F|\Sigma^\tau]$ is defined by taking the conditional expectation with respect to the $\sigma$-algebras $\Sigma^\tau$ for every $\tau\in[0,T]$.
It indeed has the martingale property
\begin{equation}
 E_{(x,T)}[F^{\tau'}|\Sigma^\tau]=F^\tau\qquad (\tau'\geq \tau).
\end{equation}

The \textbf{quadratic variation} of the martingale $F^\bullet=\{F^\tau\}_{\tau\in[0,T]}$ (and more generally of any stochastic process where the following limit exists) is defined by
\begin{equation}\label{eq_qaudrvar}
[F^\bullet]_\tau=\lim_{\norm{\{\tau_j\}}\to 0} \sum_k (F^{\tau_{k}}-F^{\tau_{k-1}})^2,
\end{equation}
where the limit is taken in probability, over all partions $\{\tau_j\}$ of $[0,\tau]$ with mesh going to zero.

Assume now that $F\in L^2(P_T\M,\Gamma_{(x,T)})$. Then the convergence in \eqref{eq_qaudrvar} is not just in probability but also in $L^1$. Moreover, we have
the Ito isometry
\begin{equation}\label{ito_isom}
E\left[[F^\bullet]_{\tau'}-[F^\bullet]_\tau\left|\Sigma^\tau\right.\right]=E\left[(F^{\tau'}-F^\tau)^2\left|\Sigma^\tau\right.\right].
\end{equation}
The differential of $[F^\bullet]_\tau$ takes the form $d[F^\bullet]_\tau=Y_\tau\, d\tau$ for some nonnegative $\Sigma^\tau$-adapted stochastic process $Y$, which we denote by $Y_\tau=\tfrac{d[F^\bullet]_\tau}{d\tau}$.
Using Fatou's lemma and equation \eqref{ito_isom} it can be estimated by
\begin{equation}\label{eq_fatou_est}
 \frac{d[F^\bullet]_\tau}{d\tau}\leq\liminf_{\eps\to 0^+}\frac{1}{\eps}E\left[[F^\bullet]_{\tau+\eps}-[F^\bullet]_\tau\left|\Sigma^\tau\right.\right]=\liminf_{\eps\to 0^+}\frac{1}{\eps}E\left[(F^{\tau+\eps}-F^\tau)^2\left|\Sigma^\tau\right.\right],
\end{equation}
for almost every $\tau$ for almost every $\gamma$.

\subsection{Heat equation and Wiener measure}

The goal of this section is to explain the relationship between the Wiener measure and the heat equation on our evolving manifolds. In particular, we will see that the Wiener measure is indeed characterized by equation \eqref{intro_pushforwardwiener}.
We start with the following representation formula for solutions of the heat equation.

\begin{proposition}[Representation formula for solutions of the heat equation]\label{prop_repformula}
If $s\in[0,T]$, and $w$ is a solution of the heat equation, $\partial_t w=\Lap_{g_t}w$, with $w|_{s}=f\in C^\infty_c(M)$, then $w(x,T)=E_{(x,T)}[f(X_{T-s})]$.
\end{proposition}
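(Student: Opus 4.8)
The plan is to use the Ito formula from Proposition~\ref{prop_SDE} to show that $\tau \mapsto w(X_{T-\tau}, T-\tau) = w(\pi_1(U_\tau), \pi_2(U_\tau))$ is a martingale, and then read off the representation formula by comparing its values at $\tau = 0$ and $\tau = T-s$. First I would lift the solution $w$ of the (forward) heat equation $\partial_t w = \Lap_{g_t} w$ to the frame bundle, setting $\tilde w = w \circ \pi : \F \to \dR$, which is a $C^2$ function (after a standard localization/cutoff argument to handle non-compactness, justified by the non-explosion established in Proposition~\ref{prop_SDE}). Applying the Ito formula \eqref{ito} gives
\begin{equation*}
 d\tilde w(U_\tau) = H_i\tilde w(U_\tau)\, dW_\tau^i + D_\tau \tilde w(U_\tau)\, d\tau + H_iH_i\tilde w(U_\tau)\, d\tau.
\end{equation*}
By Lemma~\ref{lemma_firstder} and Lemma~\ref{lemma_laplacian}, the finite-variation part is $\big(D_\tau \tilde w + \Lap_H \tilde w\big)(U_\tau) = \widetilde{(-\partial_t w + \Lap_{g_t} w)}(U_\tau)$, using $D_\tau = -D_t$ and $\widetilde{\partial_t w} = D_t \tilde w$. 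Since $w$ solves $\partial_t w = \Lap_{g_t} w$, this term vanishes identically, so $\tilde w(U_\tau)$ has vanishing drift.

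The second step is to conclude that $\{\tilde w(U_\tau)\}_{\tau \in [0,T-s]}$ is a genuine martingale (not merely a local martingale) with respect to the filtration $\Sigma^\tau$. This is where the main technical care is needed: one must produce integrability bounds uniform enough to upgrade the local martingale to a true martingale. The assumption \eqref{tech_ass} (bounded geometry) together with the distance-like function $r$ already introduced in the proof of Proposition~\ref{prop_SDE} gives moment bounds on $r(X_\tau)$, and since $w|_s = f$ has compact support, parabolic estimates bound $w$ and hence control $\tilde w(U_\tau)$; alternatively one localizes with stopping times $\sigma_k \uparrow T-s$ and passes to the limit using dominated convergence. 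I expect this integrability/passage-to-the-limit step to be the main obstacle, though it is routine given the bounded-geometry setup.

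Finally, taking expectations in the martingale identity $E_{(x,T)}[\tilde w(U_{T-s})] = \tilde w(U_0)$ and pushing forward under $\pi_1$ (so that $\pi_1(U_\tau) = X_\tau$, $\pi_2(U_\tau) = T - \tau$), we get
\begin{equation*}
 w(x,T) = \tilde w(u) = E_{(x,T)}[\tilde w(U_{T-s})] = E_{(x,T)}[w(X_{T-s}, s)] = E_{(x,T)}[f(X_{T-s})],
\end{equation*}
using $w(\cdot, s) = f$ on $M$. This is the claimed formula. As a remark, this proposition is the bridge that will let one verify the characterization \eqref{intro_pushforwardwiener} of the Wiener measure via the heat kernel, by applying the representation formula iteratively and invoking the Markov property \eqref{eq_strong_markov2}.
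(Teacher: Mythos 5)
Your proposal is correct and follows essentially the same route as the paper: lift $w$ to the frame bundle, apply the Ito formula from Proposition~\ref{prop_SDE}, use Lemmas~\ref{lemma_firstder} and \ref{lemma_laplacian} together with the heat equation to kill the drift term, and take expectations. The paper simply integrates the Ito SDE from $0$ to $T-s$ and invokes the martingale property of the resulting stochastic integral, while you dwell a bit more explicitly on the local-to-true martingale upgrade, but this is a presentational difference rather than a different argument.
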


\begin{proof}
By Definition \ref{def_brownian_motion} we have $w(X_\tau,T-\tau)=\tilde{w}(U_\tau)$, where $\tilde{w}$ denotes the lift of $w$ to the frame bundle, which is constant in fibre directions.
By the Ito formula (Proposition \ref{prop_SDE}) we have
\begin{align}
d\tilde{w}(U_\tau)=H_i\tilde{w}(U_\tau)\, dW_\tau^i+D_\tau\tilde{w}(U_\tau)\, d\tau+\Lap_H\tilde{w}(U_\tau)\,d\tau,
\end{align}
where  $\Lap_H=\sum_{i=1}^n H_i^2$ is the horizontal Laplacian.
Since $w$ solves the heat equation, the sum of the last two terms vanishes (see Lemma \ref{lemma_firstder} and Lemma \ref{lemma_laplacian}), and by integration we obtain
\begin{align}\label{repform_integrand}
\tilde{w}(U_{T-s})-\tilde{w}(U_0)=\int_0^{T-s} H_i \tilde{w}(U_\tau)\, dW_\tau^i.
\end{align}
Note that $\tilde{w}(U_0)=\tilde{w}(u)=w(x,T)$, and that $\tilde{w}(U_{T-s})=w(X_{T-s},s)=f(X_{T-s})=f(\pi_1 U_{T-s})$. Moreover, after taking expectations the term on the right hand side of  \eqref{repform_integrand} disappears by the martingale property, i.e.
since the integrand is $\Sigma^\tau$-adapted (c.f. Definition \ref{def_filtered}), and since Brownian motion has zero expectation. Thus,
\begin{equation}
w(x,T)=E_u[f(\pi_1 U_{T-s})]=E_{(x,T)}[f(X_{T-s})],
\end{equation}
as claimed.
\end{proof}

\begin{proposition}[Characterization of the Wiener measure]\label{prop_wiener}

If $e_{\bf{\sigma}}:P_{(x,T)}\M\to M^{k}$ is the evaluation map at ${\bf{\sigma}}=\{0\leq \sigma_1\leq\ldots\leq \sigma_k\leq T\}$, given by $e_\sigma(\gamma)=(\pi_1\gamma_{\sigma_1},\ldots, \pi_1\gamma_{\sigma_k})$, and if we write $s_i=T-\sigma_i$, then 
\begin{equation}\label{cor_pushwiener2}
e_{\bf{\sigma},\ast}d\Gamma_{(x,T)}(y_1,\ldots,y_k)=H(x,T|y_1,s_1)d\vol_{g_{s_1}}(y_1)\cdots H(y_{k-1},s_{k-1}|y_k,s_k)d\vol_{g_{s_k}}(y_k).
\end{equation}
Moreover, equation \eqref{cor_pushwiener2} uniquely characterizes the Wiener measure on $P_{(x,T)}\M$.
\end{proposition}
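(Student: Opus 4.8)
The plan is to prove \eqref{cor_pushwiener2} by induction on the number $k$ of evaluation times, running the induction simultaneously over all base points $(x,T)\in\M$: the case $k=1$ is a restatement of the representation formula (Proposition \ref{prop_repformula}), and the inductive step is an application of the Markov property, which was already isolated in the proof of Proposition \ref{prop_condexp}. The uniqueness clause then follows from the standard fact that a Borel probability measure on a space of continuous curves is determined by its finite-dimensional distributions.

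For the base case, let $f\in C_c^\infty(M)$ and $s_1=T-\sigma_1$. Applying Proposition \ref{prop_repformula} to the solution of the heat equation with data $f$ at time $s_1$ gives
\[
E_{(x,T)}[f(X_{\sigma_1})]=(P_{s_1T}f)(x)=\int_M f(y_1)\,H(x,T\,|\,y_1,s_1)\,d\vol_{g_{s_1}}(y_1),
\]
and since finite Borel measures on $M$ are determined by their integrals against $C_c^\infty(M)$, this is exactly the $k=1$ instance of \eqref{cor_pushwiener2}, i.e.\ it identifies the law of $X_{\sigma_1}$. (Non-explosion, from Proposition \ref{prop_SDE}, guarantees that $\Gamma_{(x,T)}$ is an honest probability measure on $P_{(x,T)}\M$.)

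For the inductive step, assume \eqref{cor_pushwiener2} holds with $k-1$ evaluation times on every based path space. Fix $0\le\sigma_1\le\cdots\le\sigma_k\le T$, set $s_i=T-\sigma_i$, take $u\in C_c^\infty(M^k)$, and let $F=u\circ e_\sigma$. Conditioning at time $\sigma_1$ and invoking Proposition \ref{prop_condexp}, we get $E_{(x,T)}[F\,|\,\Sigma^{\sigma_1}](\gamma)=v(x_{\sigma_1})$, where
\[
v(y_1)=E_{(y_1,\,s_1)}\!\left[u\big(y_1,\,X'_{\sigma_2-\sigma_1},\ldots,X'_{\sigma_k-\sigma_1}\big)\right]
\]
for each fixed $y_1$; here the only delicate point is the backward-time bookkeeping, namely that a Brownian curve based at $(y_1,s_1)$ evaluated at backward time $\sigma_j-\sigma_1$ sits at real time $s_1-(\sigma_j-\sigma_1)=s_j$, so all heat-kernel parameters line up, and $v$ is a continuous, compactly supported function of $y_1$. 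The inductive hypothesis on the path space based at $(y_1,s_1)$ expresses $v(y_1)$ as $\int_{M^{k-1}}u(y_1,\ldots,y_k)\,H(y_1,s_1|y_2,s_2)\,d\vol_{g_{s_2}}(y_2)\cdots H(y_{k-1},s_{k-1}|y_k,s_k)\,d\vol_{g_{s_k}}(y_k)$. Integrating $v$ against the law of $X_{\sigma_1}$ from the base case, and using the tower property $E_{(x,T)}[F]=E_{(x,T)}[v(X_{\sigma_1})]$, yields \eqref{cor_pushwiener2} for $k$ times, since $C_c^\infty(M^k)$ determines finite Borel measures on $M^k$.

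For uniqueness, identify $P_{(x,T)}\M$ with the space of continuous curves $\{x_\tau\}_{\tau\in[0,T]}$ in $M$ issuing from $x$; its Borel $\sigma$-algebra is generated by the algebra of cylinder sets $e_\sigma^{-1}(A)$ with $A\subseteq M^k$ Borel. Any two probability measures agreeing on all pushforwards $e_{\sigma,\ast}(\cdot)$ therefore agree on this generating algebra, hence everywhere by the monotone class ($\pi$-$\lambda$) theorem. Since \eqref{cor_pushwiener2} prescribes $e_{\sigma,\ast}\Gamma_{(x,T)}$ for every finite tuple $\sigma$, it pins down $\Gamma_{(x,T)}$. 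I expect the backward-time reparametrization in the inductive step to be the only genuinely fiddly part; everything else is the Markov property, already in hand, together with routine measure theory.
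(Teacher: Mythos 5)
Your proof is correct and follows essentially the same route as the paper's: the $k=1$ case via Proposition \ref{prop_repformula}, the inductive step via the Markov property (Proposition \ref{prop_condexp}), and uniqueness by the standard fact that finite-dimensional distributions determine a measure on path space. The only difference is the direction of the induction: you condition at the first time $\sigma_1$ (so the induction runs over all base points $(y_1,s_1)$), whereas the paper conditions at $\sigma_{k-1}$ and integrates out the last coordinate, keeping the base point $(x,T)$ fixed throughout; both are minor reorganizations of the same argument and both are valid, since the $k=1$ base case is already available at every base point.
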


\begin{proof}
By Propositon \ref{prop_repformula} we have the equality
\begin{equation}
\int_M H(x,T|y,s)f(y)d\vol_{g(s)}(y)=\int_{P_{(x,T)}\M}f(\pi_1\gamma_\sigma)d\Gamma_{(x,T)}(\gamma)
\end{equation}
for every test function $f$, say smooth with compact support.
Since these functions are dense in the space of all integrable functions on $M$, this proves \eqref{cor_pushwiener2} for $k=1$.

Now, if $f:M^k\to \dR$ and ${\bf{\sigma}}=\{0\leq \sigma_1\leq\ldots\leq \sigma_k\leq T\}$, then using Proposition \ref{prop_condexp} and what we just proved, the
conditional expectation $(e_{\sigma}^\ast f)^{\sigma_{k-1}}=E_{(x,T)} [e_{\sigma}^\ast f | \Sigma^{\sigma_{k-1}}]$ is given by
\begin{equation}
 (e_{\sigma}^\ast f)^{\sigma_{k-1}}(\gamma)=\int_M f(\pi_1\gamma_{\sigma_1},\ldots,\pi_1\gamma_{\sigma_{k-1}},y_k)H(\pi_1\gamma_{\sigma_{k-1}},s_{k-1}|y_k,s_k)d\vol_{g_{s_k}}(y_k).
\end{equation}
Using the formula $E_{(x,T)}[e_{\sigma}^\ast f]=E_{(x,T)}[E_{(x,T)}[e_{\sigma}^\ast f | \Sigma^{\sigma_{k-1}}]]$ and induction, this proves \eqref{cor_pushwiener2}.

Finally, by the density of cylinder functions in the space of measurable functions (c.f. Definition \ref{def_filtered}), equation \eqref{cor_pushwiener2} uniquely characterizes the Wiener measure on $P_{(x,T)}\M$.
\end{proof}

\subsection{Feynman-Kac formula}

We will now prove a Feynman-Kac type formula for vector valued solutions of the heat equation with potential
\begin{equation}\label{eq_vvhwp}
\D_t Y=\Lap_{g_t} Y+A_t Y,\qquad Y|_{s}=Z,
\end{equation}
where $A_t\in \textrm{End}(TM)$ is a smooth family of endomorphisms, and $Z$ is say smooth with compact support.

The idea is to generalizes the representation formula for solutions of the heat equation (Proposition \ref{prop_repformula}) in two ways by: i) using stochastic parallel translation (Definition \ref{def_stoch_par}) to transport everything to $T_xM$, and ii) multiplication by an endomorphism $R_{T-s}=R_{T-s}(\gamma):T_xM\to T_xM$, which is obtained by solving an ODE along every Brownian curve $\gamma$, to capture how the potential $A_t$ effects the solution.

\begin{proposition}[Feynman-Kac formula]\label{prop_feynman_kac}
If $s\in[0,T]$, $A_t\in \textrm{End}(TM)$, and $Y$ is a vector valued solution of the heat equation with potential, $\D_t Y=\Lap_{g_t} Y+A_t Y$, with $Y|_{s}=Z\in C^\infty_c(TM)$, then
\begin{equation}\label{eq_feynman_kac}
Y(x,T)=E_{(x,T)}[R_{T-s} P_{T-s}Z(X_{T-s})],
\end{equation}
where $R_\tau=R_\tau(\gamma):T_xM\to T_xM$ is the solution of the ODE $\tfrac{d}{d\tau} R_{\tau}=R_\tau P_\tau A_{T-\tau} P_\tau^{-1}$ with $R_0=\textrm{id}$.
\end{proposition}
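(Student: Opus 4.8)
The plan is to mimic the proof of the scalar representation formula (Proposition \ref{prop_repformula}), but now working with the $\dR^n$-valued lift of $Y$ to the frame bundle and introducing the correction term $R_\tau$ to absorb the potential $A_t$. First I would fix $s\in[0,T]$ and, for a Brownian curve, consider the process
\begin{equation*}
N_\tau = R_\tau\, P_\tau\, Y(X_\tau, T-\tau)\in T_xM,\qquad \tau\in[0,T-s],
\end{equation*}
where $P_\tau$ is stochastic parallel transport (Definition \ref{def_stoch_par}) and $R_\tau$ solves the stated ODE. Equivalently, passing to the frame bundle via $\tilde Y(u)=u^{-1}Y_{\pi u}$, one has $P_\tau Y(X_\tau,T-\tau) = U_0\tilde Y(U_\tau)$, so it suffices to study $\tilde N_\tau := R_\tau\, U_0\,\tilde Y(U_\tau)$, an $\dR^n$-valued semimartingale (after identifying $T_xM\cong\dR^n$ via $U_0=u$). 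The goal is to show $N_\tau$ is a martingale; then $N_0 = Y(x,T)$ and $E_{(x,T)}[N_{T-s}] = E_{(x,T)}[R_{T-s}P_{T-s}Z(X_{T-s})]$ gives \eqref{eq_feynman_kac}.

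The key computation is an Ito expansion of $\tilde N_\tau$. Applying the Ito formula \eqref{ito} to $\tilde Y$ (which is legitimate since $\tilde Y$ is $C^2$ and, by \eqref{tech_ass}, all relevant quantities are controlled), together with Lemma \ref{lemma_firstder} and Lemma \ref{lemma_laplacian}, gives
\begin{equation*}
d\tilde Y(U_\tau) = H_i\tilde Y(U_\tau)\,dW_\tau^i + \widetilde{\nabla_t Y}(U_\tau)\,d\tau + \widetilde{\Lap Y}(U_\tau)\,d\tau,
\end{equation*}
where $\nabla_t$ here is the covariant time derivative in the backward variable $\tau$, i.e. $D_\tau$; the PDE \eqref{eq_vvhwp} rewritten along $\gamma_\tau=(X_\tau,T-\tau)$ reads $D_\tau Y = -\Lap_{g_{T-\tau}}Y - A_{T-\tau}Y$, so the two drift terms combine to $-\widetilde{A_{T-\tau}Y}(U_\tau)\,d\tau$. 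Transporting back by $U_0$ and using $\tfrac{d}{d\tau}R_\tau = R_\tau P_\tau A_{T-\tau}P_\tau^{-1}$ together with the product rule for Ito processes (the quadratic covariation of $R_\tau$, which is of bounded variation, with everything else vanishes), the finite-variation part of $d\tilde N_\tau$ cancels exactly:
\begin{equation*}
d(R_\tau U_0\tilde Y(U_\tau)) = R_\tau U_0 H_i\tilde Y(U_\tau)\,dW_\tau^i + R_\tau\Big( P_\tau A_{T-\tau}P_\tau^{-1}\big(P_\tau Y\big) - P_\tau\big(A_{T-\tau}Y\big)\Big)d\tau = R_\tau U_0 H_i\tilde Y(U_\tau)\,dW_\tau^i,
\end{equation*}
using $P_\tau A_{T-\tau}P_\tau^{-1}P_\tau = P_\tau A_{T-\tau}$. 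Hence $N_\tau$ has zero drift.

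It remains to upgrade "zero drift'' to "true martingale'' so that the expectation of the stochastic integral vanishes. For this I would note that $R_\tau$ is uniformly bounded on $[0,T]$ since $\sup|A_t|<\infty$ by \eqref{tech_ass}, that $H_i\tilde Y$ corresponds to the parallel gradient $\widetilde{\nabla Y}$ which is controlled because $Y$ is a solution of \eqref{eq_vvhwp} with smooth compactly supported initial data and bounded geometry, and that the non-explosion of $U_\tau$ (Proposition \ref{prop_SDE}) keeps $X_\tau$ from reaching spatial infinity; a standard localization/Gronwall argument (as in the scalar case, c.f. \cite{Hsu}) then shows the local martingale is a genuine martingale on $[0,T-s]$. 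Taking expectations in $N_{T-s}-N_0 = \int_0^{T-s} R_\tau U_0 H_i\tilde Y(U_\tau)\,dW_\tau^i$, the right side vanishes, giving $Y(x,T) = N_0 = E_{(x,T)}[N_{T-s}] = E_{(x,T)}[R_{T-s}P_{T-s}Z(X_{T-s})]$, which is \eqref{eq_feynman_kac}. The main obstacle is bookkeeping: carefully matching the sign conventions for $D_\tau$ versus $D_t$ and the backward time direction, and verifying the conjugation identity that makes the potential term cancel; the integrability/localization step is routine given \eqref{tech_ass}, exactly as in Proposition \ref{prop_repformula}.
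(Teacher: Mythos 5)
Your proposal is correct and follows essentially the same route as the paper: lift $Y$ to the frame bundle, apply the Ito formula, cancel the drift against the ODE for $R_\tau$, and take expectations of the resulting local martingale. The only cosmetic difference is that you work directly with the $T_xM$-valued process $N_\tau=R_\tau P_\tau Y(X_\tau,T-\tau)$ from the start, whereas the paper first introduces the $\dR^n$-valued process $\tilde R_\tau\tilde Y(U_\tau)$ and then verifies $R_\tau=U_0\tilde R_\tau U_0^{-1}$ by an ODE uniqueness argument; the two formulations are conjugate by the fixed isometry $U_0$ and your version sidesteps that small verification.
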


\begin{remark}
Similar formulas hold for tensor valued solutions of the heat equation with potential.
\end{remark}

\begin{proof}
Let $\tilde{Y}:\F\to \mathbb{R}^n$, $\tilde{Y}(u)=u^{-1}Y_{\pi u}$, be the equivariant function associated to $Y$.
Applying the Ito formula  (Proposition \ref{prop_SDE}) to each component, we obtain
\begin{equation}
d\tilde{Y}(U_\tau)=H_i\tilde{Y}(U_\tau)dW^i_\tau+D_\tau\tilde{Y}(U_\tau)d\tau+\Lap_H\tilde{Y}(U_\tau)d\tau=H_i\tilde{Y}(U_\tau)dW^i_\tau-\tilde{A}_{T-\tau}\tilde{Y}(U_\tau)d\tau,
\end{equation}
where we lifted equation \eqref{eq_vvhwp} to $\F$ using Lemma \ref{lemma_firstder} and Lemma \ref{lemma_laplacian}.
Let $\tilde{R}_\tau:\RR^n\to \RR^n$ be the solution of the ODE $\tfrac{d}{d\tau} \tilde{R}_{\tau}=\tilde{R}_\tau \tilde{A}_{T-\tau}$ with $R_0=\textrm{id}$. Then
\begin{equation}\label{eq_proof_of_feynkac}
 d\left(\tilde{R}_\tau \tilde{Y}(U_\tau)\right)=\tilde{R}_\tau H_i\tilde{Y}(U_\tau)dW^i_\tau.
\end{equation}
The right hand side disappears after taking expectations, by the martingale property, as in the proof of Proposition \ref{prop_repformula}. Thus,
\begin{equation}
 \tilde{Y}(u)=E_u[\tilde{R}_{T-s}\tilde{Y}_{T-s}(U_{T-s})].
\end{equation}
Finally, we can translate from $\tilde{Y}$ to $Y$ by computing
\begin{equation}
 Y(x,T)=u\tilde{Y}(u)=E_u[U_0\tilde{R}_{T-s}U_0^{-1}U_0U^{-1}_{T-s}U_{T-s}\tilde{Y}_{T-s}(U_{T-s})]=E_{(x,T)}[R_{T-s}P_{T-s}Z(X_{T-s})].
\end{equation}
Here, we used that $\R_\tau= U_0 \tilde{R}_\tau U_0^{-1}$, which can be checked by computing
\begin{equation}
\tfrac{d}{d\tau} (U_0 \tilde{R}_\tau U_0^{-1})=U_0\tilde{R}_\tau \tilde{A}_{T-\tau}U_0^{-1}
=U_0\tilde{R}_\tau U_0^{-1} U_0 U_\tau^{-1} U_\tau \tilde{A}_{T-\tau}U_\tau^{-1}U_\tau U_0^{-1}
=U_0\tilde{R}_\tau U_0^{-1} P_\tau A_{T-\tau} P_\tau^{-1},\nonumber
\end{equation}
which shows that $\R_\tau$ and $U_0\tilde{R}_\tau U_0^{-1}$ solve the same ODE, and thus must be equal.
\end{proof}

\subsection{Parallel gradient and Malliavin gradient}\label{sec_gradients}

Let $F:P_{(x,T)}\M\to \dR$ be a cylinder function. If $\gamma\in P_{(x,T)}\M$ is a continuous curve and $V$ is a right continuous vector field along $\gamma$, then the 
directional derivative $D_VF(\gamma)$ is well defined as a limit of difference quotients, namely
\begin{equation}
 D_V F(\gamma)=\lim_{\varepsilon\to 0} \frac{F(\gamma^{V,\eps})-F(\gamma)}{\varepsilon},
\end{equation}
where $\gamma^{V,\eps}=\{(x^{V,\eps}_\tau,T-\tau)\}_{\tau\in [0,T]}$ is the curve in $P_{(x,T)}\M$ defined by $x^{V,\eps}_\tau=\exp^{g_\tau}_{x_\tau}(\eps V_\tau)$.

\begin{definition}[Parallel gradient]\label{def_par_grad}
Let $\sigma\in[0,T]$. If $F:P_{(x,T)}\M\to \dR$ is a cylinder function, then its \textbf{$\sigma$-parallel gradient}
is the unique almost everywhere defined function $\nabla^\p_\sigma F:P_{(x,T)}\M \to (T_{x}M,g_T)$, such that
\begin{equation}
 D_{V^\sigma} F(\gamma)=\langle \nabla^\p_\sigma F(\gamma),v\rangle_{(T_xM,g_T)}
\end{equation}
for almost every Brownian curve $\gamma$ and every $v\in (T_{x}M,g_T)$, where $V^\sigma=\{V^\sigma_\tau\}_{\tau\in [0,T]}$ is the vector field along $\gamma$ given by $V^\sigma_\tau=0$ if $\tau\in[0,\sigma)$ and $V^\sigma_\tau=P_\tau^{-1} v$ if $\tau\in[\sigma,T]$.
\end{definition}

Explicitly, if $F=u\circ e_\sigma: P_{(x,T)}\M\to M^k\to \dR$, and if we write $s_j=T-\sigma_j$, then it is straightforward to check that
\begin{equation}\label{eqn_par_grad}
 \nabla^\p_\sigma F=e_\sigma^\ast\left(\sum_{\sigma_j\geq \sigma} P_{\sigma_j}\grad_{g_{s_j}}^{(j)}u\right),
\end{equation}
where $\grad^{(j)}$ denotes the gradient with respect to the $j$-th variable, and $P_{\sigma_j}$ is stochastic parallel transport.\\

Let $\mathcal{H}$ be the Hilbert-space of $H^1$-curves $\{v_\tau\}_{\tau\in[0,T]}$ in $(T_xM,g_T)$ with $v_{0}=0$, equipped with the inner product
\begin{equation}
 \langle v,w \rangle_{\mathcal{H}}=\int_{0}^{T} \langle \dot{v}_\tau,\dot{w}_\tau \rangle_{(T_xM,g_T)}\, d\tau.
\end{equation}

\begin{definition}[Malliavin gradient]\label{def_mall_grad}
If $F:P_{(x,T)}\M \to \dR$ is a cylinder function, then its \textbf{Malliavin gradient}
is the unique almost everywhere defined function $\nabla^\cH F:P_{(x,T)}\M\to \mathcal{H}$, such that
\begin{equation}
 D_V F(\gamma)=\langle \nabla^\cH F(\gamma),v\rangle_{\mathcal{H}}
\end{equation}
for every $v\in \mathcal{H}$ for almost every Brownian curve $\gamma$, where $V=\{P_\tau^{-1} v_\tau\}_{\tau\in [0,T]}$. 
\end{definition}

Let us now explain the extension to operators on $L^2$. This is based on the integration by parts formula from the appendix (Theorem \ref{thm_intbyparts}), which says that the formal adjoint of $D_V$ is given by
\begin{equation}\label{eq_adj1}
D_V^\ast G=-D_V G + \tfrac12 G \int_0^T\langle  \tfrac{d}{d\tau} v_\tau-P_{\tau}(\Ric+\tfrac{1}{2}\partial_t{g})P_\tau^{-1} v_\tau,dW_\tau\rangle.
\end{equation}
By the Ito isometry and \eqref{tech_ass} we have the estimate
\begin{equation}\label{eq_adj2}
E_{(x,T)}\left|\int_0^T\langle  \tfrac{d}{d\tau} v_\tau-P_{\tau}(\Ric+\tfrac{1}{2}\partial_t{g})P_\tau^{-1} v_\tau,dW_\tau\rangle\right|^2\leq C\abs{v}_{\mathcal{H}}^2.
\end{equation}
Using \eqref{eq_adj1}, \eqref{eq_adj2}, and the definition of the formal adjoint, we see that if $F_n$ is a sequence of cylinder functions with $F_n\to 0$ and $D_V F_n\to K$ in $L^2(P_{(x,T)}\M)$, then $(K,G)=0$ for all cylinder functions $G$, and thus $K=0$.
It follows that $\nabla^\cH$ can be extended to a closed unbounded operator from $L^2(P_{(x,T)}\M)$ to $L^2(P_{(x,T)}\M,\mathcal{H})$, with the cylinder functions being a dense subset of the domain.
Similarly, $\nabla^\p_\sigma$ can be extended to a closed unbounded operator from $L^2(P_{(x,T)}\M)$ to $L^2(P_{(x,T)}\M,T_xM)$, again with the cylinder functions being a dense subset of the domain.

\subsection{Ornstein-Uhlenbeck operator}
The \textbf{Ornstein-Uhlenbeck operator} $\mathcal{L}=\nabla^{\cH \ast}\nabla^\cH$ is an unbounded operator on $L^2(P_T\M,\Gamma_{(x,T)})$ defined by composing the Malliavin gradient with its adjoint. More generally, there is a family of 
Ornstein-Uhlenbeck operators $\mathcal{L}_{\tau_1,\tau_2}$ on $L^2(P_T\M,\Gamma_{(x,T)})$ defined by the formula $\mathcal{L}_{\tau_1,\tau_2}=\int_{\tau_1}^{\tau_2} \nabla^{\p \ast}_\tau\nabla^\p_\tau\, d\tau$, which captures the part of the Laplacian coming form the time range $[\tau_1,\tau_2]$. The next proposition shows in particular that $\mathcal{L}=\mathcal{L}_{0,T}$.

\begin{proposition}\label{prop_malliavin}
If $F:P_T\M\to \dR$ is a cylinder function, then for almost every curve $\gamma\in (P_T\M,\Gamma_{(x,T)})$ we have the formula
\begin{equation}
 \abs{\D^\cH F}^2(\gamma)=\int_{0}^{T}\abs{\D^\p_\tau F}^2(\gamma)\, d\tau.
\end{equation}
\end{proposition}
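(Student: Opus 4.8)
The plan is to compute both sides explicitly for a cylinder function $F=u\circ e_\sigma$, where $\sigma=\{0\le\sigma_1\le\cdots\le\sigma_k\le T\}$ and $u\in C_c^\infty(M^k)$, and to observe that the integrand $\nabla^\p_\tau F$ is precisely the $\tau$-derivative of the $\mathcal{H}$-valued function $\nabla^\cH F$. First I would fix a curve $\gamma$ for which stochastic parallel transport $P_\tau=P_\tau(\gamma)$ is defined (this holds for a.e.\ $\gamma$ by Section \ref{sec_brownian}) and compute the directional derivative of $F$ in the direction $V=\{P_\tau^{-1}v_\tau\}_{\tau\in[0,T]}$ for $v\in\mathcal{H}$. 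Using the definition of $e_\sigma$ and the chain rule, $D_VF(\gamma)=\sum_j\langle\grad^{(j)}_{g_{s_j}}u,\,P_{\sigma_j}^{-1}v_{\sigma_j}\rangle_{g_{s_j}}$ (with $s_j=T-\sigma_j$), and since each $P_{\sigma_j}$ is a $g$-isometry onto $(T_xM,g_T)$ this equals $\sum_j\langle P_{\sigma_j}\grad^{(j)}u,\,v_{\sigma_j}\rangle_{(T_xM,g_T)}$.

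Next I would rewrite $v_{\sigma_j}=\int_0^T\mathbf{1}_{[0,\sigma_j]}(\tau)\,\dot v_\tau\,d\tau$, which is valid because $v_0=0$, so that $D_VF(\gamma)=\int_0^T\big\langle\sum_{j:\,\sigma_j\ge\tau}P_{\sigma_j}\grad^{(j)}u,\ \dot v_\tau\big\rangle\,d\tau$. Comparing with the defining property of $\nabla^\cH F$ in Definition \ref{def_mall_grad} and the inner product on $\mathcal{H}$, this identifies $\nabla^\cH F(\gamma)$ as the piecewise-linear (hence $H^1$) curve $h$ with $h_0=0$ and $\dot h_\tau=\sum_{j:\,\sigma_j\ge\tau}P_{\sigma_j}\grad^{(j)}_{g_{s_j}}u$. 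But by the explicit formula \eqref{eqn_par_grad} the right-hand side is exactly $\nabla^\p_\tau F(\gamma)$. Hence $\tfrac{d}{d\tau}\nabla^\cH F=\nabla^\p_\tau F$ for a.e.\ $\tau$, and taking $\mathcal{H}$-norms gives $\abs{\nabla^\cH F}^2_\mathcal{H}=\int_0^T\abs{\dot h_\tau}^2\,d\tau=\int_0^T\abs{\nabla^\p_\tau F}^2\,d\tau$, as claimed.

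The computation itself is essentially bookkeeping; the only points requiring care are measure-theoretic. One should note that $\nabla^\p_\tau F(\gamma)$, as a function of $\tau$, is a step function with values in $T_xM$, so that $h$ indeed lies in $\mathcal{H}$; that replacing the condition $\sigma_j\ge\tau$ by $\sigma_j>\tau$ changes the integrand only on a null set of $\tau$; and that both gradients are, for a.e.\ $\gamma$, uniquely determined by the directional derivatives $D_VF$ (Definitions \ref{def_par_grad} and \ref{def_mall_grad}), whose defining difference-quotient limits exist a.e. I do not expect a genuine obstacle here: the proposition is really just the assertion that the parallel gradients $\nabla^\p_\tau$ assemble into the time-derivative of the Malliavin gradient. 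The one place where more care is genuinely needed is when this identity is later used for the $L^2$-closed operators rather than on cylinder functions, which requires the density statements of Section \ref{sec_gradients}.
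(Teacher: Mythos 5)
Your argument is correct and follows essentially the same route as the paper's proof: both compute $D_VF$ for a cylinder function, read off $\tfrac{d}{d\tau}\nabla^\cH F(\gamma)=\sum_{j:\sigma_j\geq\tau}P_{\sigma_j}\grad^{(j)}_{g_{s_j}}u$ from the defining pairing against $\langle\cdot,\cdot\rangle_{\cH}$, identify this with $\nabla^\p_\tau F$ via \eqref{eqn_par_grad}, and integrate the square. Your explicit rewriting $v_{\sigma_j}=\int_0^T\mathbf{1}_{[0,\sigma_j]}(\tau)\,\dot v_\tau\,d\tau$ makes the identification of $\nabla^\cH F$ a little more transparent than the paper's ``it follows that,'' but it is the same computation.
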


\begin{proof}
 The cylinder function has the form $F=u\circ e_\sigma:P_{T}\M\to M^k\to\dR$. By the definition of the Malliavin gradient (Definition \ref{def_mall_grad}), for almost every $\gamma\in (P_T\M,\Gamma_{(x,T)})$ we have
\begin{equation}
 \sum_{j=1}^k \langle v_{\sigma_j},P_{\sigma_j}\grad^{(j)}_{g_{s_j}}u(e_{\sigma_j}\gamma)\rangle
=D_{V}F(\gamma)
=\langle \D^\cH F(\gamma),v\rangle_\cH
=\int_0^T \langle \tfrac{d}{d\tau} \D^\cH F(\gamma),\tfrac{d}{d\tau} v\rangle\, d\tau.
\end{equation}
It follows that
\begin{equation}
 \tfrac{d}{d\tau} \D^\cH F(\gamma)=\sum_{j=1}^k 1_{\{\tau\leq \sigma_j\}}P_{\sigma_j}\grad^{(j)}_{g_{s_j}}u(e_{\sigma_j}\gamma).
\end{equation}
Based on this, writing $\sigma_0=0$, we compute 
\begin{equation}
 \abs{\D^\cH F}^2_{\mathcal{H}}(\gamma)=\int_0^T \abs{\tfrac{d}{d\tau} \D^\cH F(\gamma)}^2\,d\tau
=\sum_{j=1}^k(\sigma_j-\sigma_{j-1})\,\Big|\!\sum_{\ell=j}^k P_{\sigma_\ell}\grad^{(\ell)}_{g_{s_\ell}}u(e_{\sigma_\ell}\gamma)\Big|^2
=\int_0^T\abs{\D^\p_\tau F}^2(\gamma)\, d\tau,
\end{equation}
where we used that the integrands are piecewise constant. This proves the proposition.
\end{proof}

\section{Proof of the main theorem}\label{sec_proof_main}

In this section, we prove our main theorem (Theorem \ref{main_thm_intro}) characterizing solutions of the Ricci flow.\\

We will prove the implications (R1)$\Rightarrow$(R2)$\Rightarrow$(R3')$\Rightarrow$(R4)$\Rightarrow$(R5)$\Rightarrow$(R3)$\Rightarrow$(R2')$\Rightarrow$(R1).
Here, (R3') denotes the (seemingly stronger) statement that for every $F\in L^2(P_T\M,\Gamma_{(x,T)})$ we have the pointwise estimate
\begin{equation*}
(R3')\qquad \sqrt{{\frac{d[F^\bullet]_\tau}{d\tau}}}(\gamma)\leq \sqrt{2}\, E_{(x,T)}\left[ |{\D^\p_\tau F}|\left| \Sigma^\tau\right.\right](\gamma)
\end{equation*}
for almost every $\gamma\in P_{(x,T)}\M$ for almost every $\tau\in[0,T]$, and (R2') denotes the (seemingly weaker) statement that for every $F\in L^2(P_T\M,\Gamma_T)$, we have the gradient estimate
\begin{equation*}
(R2')\qquad \abs{\nabla_x\int_{P_T\M} F d\Gamma_{(x,T)}}^2\leq \int_{P_T\M}\abs{\nabla^\p F}^2 d\Gamma_{(x,T)}.
\end{equation*}

Before delving into the proof, we observe that it suffices to prove the estimates for cylinder functions, since this implies the general case by approximation. For illustration, let us spell out the approximation argument for (R2):
Let $F\in L^2(P_T\M,\Gamma_T)$. Let $F_j$ be a sequence of cylinder functions that converges to $F$ in $L^2(P_T\M,\Gamma_T)$ and pointwise almost everywhere.
By Fubini's theorem and the dominated convergence theorem, for a.e. $x\in M$ we obtain that $\lim_{j\to\infty} E_{(x,T)}F_j^2=E_{(x,T)}F^2<\infty$. We can assume that for a.e. $x\in M$ the function $F$ is in the domain of the parallel gradient based at $(x,T)$ (since otherwise the right hand side of (R2) is infinite by convention and the estimate holds trivially). Thus, $\lim_{j\to\infty} E_{(x,T)}\abs{\nabla^\p F_j}=E_{(x,T)}\abs{\nabla^\p F}<\infty$ for a.e. $x\in M$. If we know that (R3) holds for cylinder functions, then we can infer that
\begin{equation}\label{eq_loclip}
\limsup_{j\to\infty}\,\big|\nabla_x \!\int_{P_T\M} \!\!\!\!F_j\, d\Gamma_{(x,T)}\big|\leq \int_{P_T\M} \abs{\nabla^\p F}\, d\Gamma_{(x,T)}
\end{equation}
for a.e. $x\in M$. Once we know that the local Lipschitz-bounds \eqref{eq_loclip} holds, then passing to a subsequential limit we can conclude that (R2) holds for $F$ for a.e. $x\in M$.



\subsection{The gradient estimate}\label{subsec_gradest}

The goal of this section is to prove the implication (R1)$\Rightarrow$(R2). We start with the following theorem for the gradient of the expectation value.

\begin{theorem}[Gradient formula]\label{thm_gradient_formula}
If $(M,g_t)_{t\in I}$ is an evolving family of Riemannian manifolds and  $F:P_T\M\to \dR$ is a cylinder function, then
\begin{equation}\label{eq_bismut_form}
\grad_{g_T} E_{(x,T)}F=E_{(x,T)}\left[\D^\p F+\int_0^T \tfrac{d}{d\tau}R_\tau\, \D^\p_\tau F\,d\tau\right],
\end{equation}
where $R_\tau=R_\tau(\gamma):T_xM\to T_xM$ is the solution of the ODE $\tfrac{d}{d\tau}R_\tau=-R_\tau P_\tau(\Ric+\tfrac12 \partial_t g)P_\tau^{-1}$ with $R_0=\textrm{id}$.
\end{theorem}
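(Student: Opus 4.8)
\emph{Strategy.} The plan is to prove the formula first for $1$-point cylinder functions, where it follows directly from a Bochner computation plus the Feynman--Kac formula, and then to bootstrap to general $k$-point cylinder functions by induction on the number of evaluation times, using the Markov property. The object $R_\tau$ is the ``damped parallel transport'' (the Bismut/Elworthy--Li derivative flow), and its appearance is forced by how the Brownian motion responds to moving its base point.

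\emph{Base case and reformulation.} Suppose $F=u\circ e_\sigma$ has a single evaluation time $\sigma\in[0,T]$ and write $s=T-\sigma$. Let $w$ solve $\partial_t w=\Lap_{g_t}w$ with $w|_s=u$, so $E_{(x,T)}F=w(x,T)$ by Proposition \ref{prop_repformula} (equivalently Proposition \ref{prop_wiener}), and we must compute $\grad_{g_T}w(x,T)$. Setting $Y=\grad_{g_t}w$ and using the space-time connection \eqref{eq_spacetime_connection}, a Bochner-type identity — the $(1,1)$-tensor companion of \eqref{parboch} — shows that $Y$ solves a vector-valued heat equation with potential,
\begin{equation*}
\nabla_t Y=\Lap_{g_t}Y+A_tY,\qquad A_t=-\bigl(\Ric_{g_t}+\tfrac12\partial_t g_t\bigr),\qquad Y|_s=\grad_{g_s}u.
\end{equation*}
Applying the Feynman--Kac formula (Proposition \ref{prop_feynman_kac}) with this $A_t$ gives $\grad_{g_T}w(x,T)=E_{(x,T)}\bigl[R_\sigma\,P_\sigma\,\grad_{g_s}u(X_\sigma)\bigr]$, with $R_\tau$ solving exactly the ODE in the statement. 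Since \eqref{eqn_par_grad} gives $\nabla^\p_\tau F=\mathbf{1}_{[0,\sigma]}(\tau)\,\nabla^\p F$, we get $\nabla^\p F+\int_0^T\tfrac{d}{d\tau}R_\tau\,\nabla^\p_\tau F\,d\tau=\nabla^\p F+(R_\sigma-R_0)\nabla^\p F=R_\sigma\nabla^\p F$, and $R_\sigma\nabla^\p F(\gamma)=R_\sigma P_\sigma\grad_{g_s}u(X_\sigma)$; this is the claim for $k=1$. More generally, integrating by parts in $\tau$ (the map $\tau\mapsto\nabla^\p_\tau F$ is piecewise constant by \eqref{eqn_par_grad}, with downward jumps $-P_{\sigma_j}\grad^{(j)}_{g_{s_j}}u$ at the evaluation times) turns the asserted formula into the equivalent Bismut-type identity
\begin{equation*}
\grad_{g_T}E_{(x,T)}F=E_{(x,T)}\Bigl[\textstyle\sum_{j=1}^k R_{\sigma_j}\,P_{\sigma_j}\,\grad^{(j)}_{g_{s_j}}u\,(e_\sigma\gamma)\Bigr],
\end{equation*}
which is the form I would carry through the induction.

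\emph{Inductive step.} Assume the formula for $(k-1)$-point cylinder functions and let $F=u\circ e_\sigma$ with $0\le\sigma_1\le\cdots\le\sigma_k\le T$. By Proposition \ref{prop_condexp} the conditional expectation $F^{\sigma_1}$ is the $1$-point cylinder function $\phi\circ e_{\sigma_1}$, where $\phi(y)=E_{(y,s_1)}[\tilde F_y]$ and $\tilde F_y$ is the $(k-1)$-point cylinder function on $P_{(y,s_1)}\M$ obtained from $u(y,\cdot)$ by evaluating at the times $\sigma_2-\sigma_1,\dots,\sigma_k-\sigma_1$. Since $E_{(x,T)}F=E_{(x,T)}F^{\sigma_1}$, the base case gives $\grad_{g_T}E_{(x,T)}F=E_{(x,T)}\bigl[R_{\sigma_1}P_{\sigma_1}\grad_{g_{s_1}}\phi(X_{\sigma_1})\bigr]$. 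Differentiating $\phi(y)=E_{(y,s_1)}[\tilde F_y]$ in $y$ splits into the derivative through the first slot of $u$, contributing $E_{(y,s_1)}[\grad^{(1)}_{g_{s_1}}u(y,\cdot)]$, and the derivative through the base point of the Brownian family, to which the inductive hypothesis applies, contributing $E_{(y,s_1)}\bigl[\sum_{j=2}^k R'_{\sigma_j-\sigma_1}P'_{\sigma_j-\sigma_1}\grad^{(j)}_{g_{s_j}}u\bigr]$ (primes referring to paths $\gamma'$ based at $(y,s_1)$). Substituting, applying $R_{\sigma_1}P_{\sigma_1}$, and using that parallel transport is a cocycle together with the multiplicative structure of the ODE for $R$ — which yields $R_{\sigma_1}(\gamma)P_{\sigma_1}(\gamma)\circ R'_{\sigma_j-\sigma_1}(\gamma')P'_{\sigma_j-\sigma_1}(\gamma')=R_{\sigma_j}P_{\sigma_j}$ along the concatenation $\gamma|_{[0,\sigma_1]}\ast\gamma'$ — one obtains exactly $\sum_{j=1}^k R_{\sigma_j}P_{\sigma_j}\grad^{(j)}_{g_{s_j}}u$ inside $E_{\gamma_{\sigma_1}}[\,\cdot\,]$; the outer expectation $E_{(x,T)}$ and the tower property then close the induction.

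\emph{Main obstacle.} The base case is routine once the Bochner identity — the time-dependent, $(1,1)$-tensor analogue of \eqref{parboch} — is in hand. The real work is the inductive step: one must justify that differentiating $\phi(y)=E_{(y,s_1)}[\tilde F_y]$ genuinely splits into the two contributions above (a product-rule statement requiring smooth dependence of the Brownian family on its base point, which is harmless for smooth compactly supported cylinder data), and one must verify the cocycle/conjugation identity relating $R$ and $P$ along a concatenated path to their counterparts along the two pieces — this is precisely where the form $\tfrac{d}{d\tau}R_\tau=-R_\tau P_\tau(\Ric+\tfrac12\partial_t g)P_\tau^{-1}$ is used. Everything else is bookkeeping with \eqref{eqn_par_grad} and Proposition \ref{prop_condexp}. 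An alternative would be to deduce the formula from the integration by parts identity \eqref{eq_adj1} by choosing a Cameron--Martin perturbation built from $R_\tau$ and then adding a base-point variation, but the inductive Feynman--Kac route above is more self-contained.
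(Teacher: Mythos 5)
Your proposal is correct and takes essentially the same route as the paper: induction on the order of the cylinder function, with the base case via the evolution equation for $\grad_{g_t}$ of a heat-equation solution (the paper's Proposition \ref{prop_evol_grad}) fed into the Feynman--Kac formula (Proposition \ref{prop_feynman_kac}), and the inductive step via conditioning at $\sigma_1$ (Proposition \ref{prop_condexp}), the product rule, and the cocycle identity $P_{\sigma_1}R'_{\tau-\sigma_1}P_{\sigma_1}^{-1}=R_{\sigma_1}^{-1}R_\tau$ along concatenated paths. The only cosmetic difference is that you carry the expanded Bismut form $\sum_j R_{\sigma_j}P_{\sigma_j}\grad^{(j)}_{g_{s_j}}u$ through the induction, whereas the paper works directly with $\D^\p F+\int_0^T\tfrac{d}{d\tau}R_\tau\,\D^\p_\tau F\,d\tau$; these agree by the integration by parts in $\tau$ that you note.
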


Our proof of Theorem \ref{thm_gradient_formula} is by induction on the order of the cylinder function.
The main ingredients are the Feynman-Kac formula for vector valued solutions of the heat equation (Proposition \ref{prop_feynman_kac}),
the formula for the conditional expectation value (Proposition \ref{prop_condexp}), and the following evolution equation for the gradient.

\begin{proposition}[Evolution of the gradient]\label{prop_evol_grad}
 If $(M,g_t)_{t\in I}$ is an evolving family of Riemannian manifolds, and $u$ solves the heat equation, $\partial_t u = \Lap_{g_t} u$, then its gradient, $\grad_{g_t} u$, solves the equation
\begin{equation}
 \nabla_t\, \grad_{g_t} u=\Lap_{g_t}\grad_{g_t} u-(\Ric+\tfrac12 \partial_tg_t)(\grad_{g_t} u,\cdot)^{\sharp_{g_t}}.
\end{equation}
\end{proposition}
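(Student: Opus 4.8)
The plan is to compute the evolution equation for $\grad_{g_t} u$ directly, using the commutation relations between the time derivative $\nabla_t$, the spatial covariant derivative $\nabla$, and the Laplacian $\Lap_{g_t}$ on the time-dependent manifold. The starting point is the standard (static) Bochner-type identity: for a fixed metric, if $u$ is any function then $\nabla_i \Lap u = \Lap \nabla_i u - \Ric_{ij}\nabla^j u$, i.e. $\grad(\Lap u) = \Lap(\grad u) - \Ric(\grad u,\cdot)^\sharp$. So the only real work is to understand the \emph{extra} terms that appear when we differentiate $\grad_{g_t} u$ in the time direction using the space-time connection $\nabla_t Y = \partial_t Y + \tfrac12 \partial_t g_t(Y,\cdot)^{\sharp_{g_t}}$ from \eqref{eq_spacetime_connection}, rather than an ordinary partial time derivative.

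Concretely, I would write everything in local coordinates. Let $u$ solve $\partial_t u = \Lap_{g_t} u$. The gradient has components $(\grad_{g_t} u)^i = g^{ij}\partial_j u$, and the subtlety is that $g^{ij}$ itself depends on $t$: $\partial_t g^{ij} = -g^{ik}g^{jl}\partial_t g_{kl}$. First I would compute $\partial_t\big((\grad_{g_t} u)^i\big) = (\partial_t g^{ij})\partial_j u + g^{ij}\partial_j(\partial_t u) = -(\partial_t g)^{ij}\partial_j u + g^{ij}\partial_j(\Lap_{g_t} u)$, where I raise indices with $g_t$. Then I would add the correction term $\tfrac12 \partial_t g_t(\grad_{g_t} u,\cdot)^{\sharp_{g_t}}$, which in coordinates is $+\tfrac12 (\partial_t g)^{ij}\partial_j u$, to form $\nabla_t \grad_{g_t} u$. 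The two $\partial_t g$ terms combine to give $-\tfrac12 (\partial_t g)(\grad_{g_t} u,\cdot)^{\sharp}$, which is exactly the $-\tfrac12\partial_t g_t$ piece of the claimed potential term. The remaining piece $g^{ij}\partial_j(\Lap_{g_t} u) = \grad_{g_t}(\Lap_{g_t} u)$ is then handled by the static Bochner commutation identity at the fixed time $t$, producing $\Lap_{g_t}\grad_{g_t} u - \Ric_{g_t}(\grad_{g_t} u,\cdot)^{\sharp}$. Combining the two contributions yields precisely
\[
\nabla_t\, \grad_{g_t} u=\Lap_{g_t}\grad_{g_t} u-(\Ric+\tfrac12 \partial_tg_t)(\grad_{g_t} u,\cdot)^{\sharp_{g_t}},
\]
as claimed. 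Alternatively, and perhaps more cleanly for exposition, I could use the frame bundle formalism of Lemmas \ref{lemma_firstder} and \ref{lemma_laplacian}: lift $\grad u$ to the equivariant function $\widetilde{\grad u}$ on $\F$, use $\widetilde{\nabla_t \grad u} = D_t \widetilde{\grad u}$ and $\widetilde{\Lap \grad u} = \Lap_H \widetilde{\grad u}$, and then the identity reduces to a commutator computation of $[D_t, H_i^2]$ acting on lifted functions, with the $\partial_t g$ term arising from the $t$-dependence of the horizontal vector fields $H_i$.

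The main obstacle is simply bookkeeping: keeping careful track of which metric is used to raise and lower indices at each step, and not conflating $\partial_t g_t$ as a $(0,2)$-tensor with its various index-raised avatars. There is a genuine risk of sign errors and of dropping or double-counting the correction term in $\nabla_t$; the cleanest safeguard is the metric-compatibility identity $\tfrac{d}{dt}|Y|^2_{g_t} = 2\langle Y, \nabla_t Y\rangle$, which one can use as a consistency check by verifying it against $\tfrac{d}{dt}|\grad_{g_t} u|^2_{g_t}$ computed two ways. No deep input beyond the static Bochner formula and the definition \eqref{eq_spacetime_connection} is needed.
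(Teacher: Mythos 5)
Your proposal is correct and follows essentially the same route as the paper's proof: compute $\partial_t \grad_{g_t} u$ via $\partial_t g^{-1}=-g^{-1}(\partial_t g)g^{-1}$, add the $\tfrac12\partial_t g(\grad u,\cdot)^{\sharp}$ correction from the definition of $\nabla_t$, substitute the heat equation, and invoke the static Bochner commutation $\grad\Lap u=\Lap\grad u-\Ric(\grad u,\cdot)^{\sharp}$. The only difference is cosmetic — you work in index notation while the paper writes the same identities abstractly — and your bookkeeping of the two $\partial_t g$ contributions combining to $-\tfrac12\partial_t g(\grad u,\cdot)^{\sharp}$ is exactly right.
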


\begin{proof}
 Using the formula $\partial_t (g^{-1})=-g^{-1}(\partial_tg)g^{-1}$ and the definitions of $\grad_{g_t} (u)$ and  $\D_t$, we compute
\begin{align}
 \nabla_t\, \grad_{g_t} u&=\grad_{g_t} (\partial_t u)-\partial_tg_t(\grad_{g_t} u, \cdot )^{\sharp_{g_t}}+\tfrac{1}{2}\partial_tg_t(\grad_{g_t} u, \cdot )^{\sharp_{g_t}}\nonumber\\
&=\Lap_{g_t}\grad_{g_t} u-(\Ric+\tfrac12 \partial_tg_t)(\grad_{g_t} u,\cdot)^{\sharp_{g_t}},
\end{align}
where we used the equation $\partial_t u = \Lap_{g_t} u$ and commuted the Laplacian and the gradient.
\end{proof}

\begin{proof}[Proof of Theorem \ref{thm_gradient_formula}]
We argue by induction on the order $k=\abs{\sigma}$ of the cylinder function $F=e_\sigma^\ast u$.

If $k=1$, then by equation \eqref{intro_push_heat} the expectation $E_{(x,T)}F$ is given by integration with respect to the heat kernel,
namely
\begin{equation}
 E_{(x,T)}F=\int_M u(y)H(x,T|y,s)d\vol_{g(s)}(y)=(P_{sT}u)(x), 
\end{equation}
where $s=T-\sigma$. On the other hand, by Proposition \ref{prop_evol_grad} we have the evolution equation
\begin{equation}
  \nabla_t\, \grad_{g_t} P_{st}u=\Lap_{g_t}\grad_{g_t} P_{st}u-(\Ric+\tfrac12 \partial_tg_t)(\grad_{g_t} P_{st}u),
\end{equation}
where we view $(\Ric+\tfrac12 \partial_tg_t)$ as endomorphism (using the metric $g_t$).
We can thus apply the Feynman-Kac formula (Proposition \ref{prop_feynman_kac}), and obtain
\begin{equation}
 (\grad_{g_T}P_{sT}u)(x)=E_{(x,T)}[R_\sigma P_{\sigma}(\grad_{g_s} u)(X_{\sigma})],
\end{equation}
where $R_\tau=R_\tau(\gamma):T_xM\to T_xM$ is the solution of the ODE $\tfrac{d}{d\tau}R_\tau=-R_\tau P_\tau(\Ric+\tfrac12 \partial_t g)P_\tau^{-1}$ with $R_0=\textrm{id}$.
Using the fundamental theorem of calculus and equation \eqref{eqn_par_grad}, we can rewrite this as
\begin{align}
 (\grad_{g_T}P_{sT}u)(x)=E_{(x,T)}\left[\left(\textrm{id}+\int_0^\sigma \tfrac{d}{d\tau} R_\tau\, d\tau\right) P_{\sigma}(\grad_{g_s} u)(X_{\sigma})\right]=E_{(x,T)}\left[\D^\p F+\int_0^T \tfrac{d}{d\tau}R_\tau\, \D^\p_\tau F\,d\tau\right].
\end{align}
Thus, the gradient formula \eqref{eq_bismut_form} holds true for $1$-point cylinder functions.

Now, arguing by induction, let $F=e_\sigma^\ast u$ be a $k$-point cylinder function and let $s_i=T-\sigma_i$. Note that 
\begin{equation}\label{eq_doubleexp}
 E_{(x,T)}F=E_{(x,T)}E_{(x,T)}[F|\Sigma^{\sigma_1}],
\end{equation}
Using Proposition \ref{prop_condexp} we see that $G:=E_{(x,T)}[F|\Sigma^{\sigma_1}]$ is a $1$-point cylinder function given by $G=e_{\sigma_1}^\ast w$,
\begin{equation}
 w(y)=E_{(y,s_1)}[u(y,X'_{\sigma_2-\sigma_1},\ldots,X'_{\sigma_k-\sigma_1})],
\end{equation}
where the expectation is over all Brownian curves starting at $(y,T-\sigma_1)$. Note that by equation \eqref{eq_doubleexp} and the case $k=1$ of the gradient formula we have
\begin{equation}
 \grad_{g_T} E_{(x,T)}F=\grad_{g_T} E_{(x,T)}G=E_{(x,T)}R_{\sigma_1} P_{\sigma_1} (\grad_{g_{s_1}} w)(X_{\sigma_1}),
\end{equation}
where $R_\tau=R_\tau(\gamma):T_xM\to T_xM$ is the solution of the ODE $\tfrac{d}{d\tau}R_\tau=-R_\tau P_\tau(\Ric+\tfrac12 \partial_t g)P_\tau^{-1}$ with $R_0=\textrm{id}$.
Using the product rule and induction, we compute
\begin{align}
 (\grad_{g_{s_1}} w)(y)=&E_{(y,s_1)}\grad_{g_s^1}^{(1)}u(y,X'_{\sigma_2-\sigma_1},\ldots,X'_{\sigma_k-\sigma_1})\nonumber\\
&+E_{(y,s_1)}\left[\D'^\p u(y,X'_{\sigma_2-\sigma_1},\ldots,X'_{\sigma_k-\sigma_1})+\int_0^{T-\sigma_1}\!\!\!\!\!\!\!\!\! \tfrac{d}{d\tau}R'_{\tau}\,\D'^\p_\tau u(y,X'_{\sigma_2-\sigma_1},\ldots,X'_{\sigma_k-\sigma_1})\,d\tau\right]
\end{align}
where $X'$ and $\D'^\p$ denotes Brownian motion and the parallel gradient based at $(y,T-\sigma_1)$, and $R'_\tau=R'_\tau(\gamma'):T_yM\to T_yM$ is the solution of the 
ODE $\tfrac{d}{d\tau}R'_\tau=-R'_\tau P'_\tau(\Ric+\tfrac12 \partial_t g)P_\tau'^{-1}$ with $R'_0=\textrm{id}$. Note that
\begin{multline}
E_{(y,s_1)}\grad_{g_s^1}^{(1)}u(y,X'_{\sigma_2-\sigma_1},\ldots,X'_{\sigma_k-\sigma_1})
+E_{(y,s_1)}\D'^\p u(y,X'_{\sigma_2-\sigma_1},\ldots,X'_{\sigma_k-\sigma_1})\\
=\sum_{j=1}^k E_{(y,s_1)} P'_{\sigma_j-\sigma_1}(\grad_{g_{s_j}}^j u)(X'_{\sigma_1-\sigma_1},\ldots,X'_{\sigma_k-\sigma_1}).
\end{multline}
Moreover, if $\gamma=\gamma|_{[0,\sigma_1]}\ast \gamma'$ then $P_{\tau}(\gamma|_{[0,\sigma_1]}\ast \gamma')=P_{\sigma_1}(\gamma)\circ P'_{\tau-\sigma_1}(\gamma')$ and thus
\begin{equation}
 P_{\sigma_1}R'_{\tau-\sigma_1}P_{\sigma_1}^{-1}=R^{-1}_{\sigma_1}R_\tau
\end{equation}
for $\tau\geq \sigma_1$, since both sides solve the same ODE with the same initial condition at time $\sigma_1$.
Putting everything together, we conclude that
\begin{align}
 \grad_{g_T} E_{(x,T)}F=E_{(x,T)}\left[R_{\sigma_1}\D^\p F+\int_{\sigma_1}^T\tfrac{d}{d\tau}R_\tau\, \nabla_\tau^\p F\, d\tau\right]
=E_{(x,T)}\left[\D^\p F+\int_{0}^T\tfrac{d}{d\tau}R_\tau\, \nabla_\tau^\p F\, d\tau\right],
\end{align}
where we also used Proposition \ref{prop_condexp}, the formula $P_{\sigma_j}(\gamma|_{[0,\sigma_1]}\ast \gamma')=P_{\sigma_1}(\gamma)\circ P'_{\sigma_j-\sigma_1}(\gamma')$,
and \eqref{eqn_par_grad}.
\end{proof}

\begin{proof}[Proof of (R1)$\Rightarrow$(R2)]
The gradient formula (Theorem \ref{thm_gradient_formula}), together with the above approximation argument, immediately establishes the implication (R1) $\Rightarrow$ (R2). To see this, just observe that for families of Riemannian manifolds evolving by Ricci flow the time integral in \eqref{eq_bismut_form} vanishes, that
 $\abs{\nabla_x \int_{P_T\mathcal{M}} F\,d\Gamma_x}$ and $\abs{\grad_{g_T} E_{(x,T)}F}$ are the same (just in different notation), and that
$\abs{E_{(x,T)}\D^\p F}\leq \int_{P_T\mathcal{M}}\abs{\D^\p F}\,d\Gamma_{(x,T)}$.
\end{proof}

\subsection{Regularity of martingales}

The goal of this section is to establish the implication (R2)$\Rightarrow$(R3'). For convenience of the reader, we also prove the (obvious and logically not needed) implication (R3')$\Rightarrow$(R3).
We start with the following formula for the quadratic variation of a martingale on path space.

\begin{theorem}[Quadratic variation formula]\label{thm_quadrvar}
 If $(M,g_t)_{t\in I}$ is an evolving family of Riemannian manifolds and $F:P_{(x,T)}\M\to \dR$ is a cylinder function, then
\begin{equation}
 \frac{d[F^\bullet]_\tau}{d\tau}(\gamma)=2\abs{\D_y E_{(y,T-\tau)}F_{\gamma[0,\tau]}}^2(\pi_1\gamma_\tau)
\end{equation}
for almost every $\gamma\in P_{(x,T)}\M$, where $F_{\gamma[0,\tau]}:P_{T-\tau}\M\to\dR$ is defined by $F_{\gamma[0,\tau]}(\gamma')=F(\gamma|_{[0,\tau]}\ast \gamma')$.
\end{theorem}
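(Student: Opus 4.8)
The plan is to reduce everything to the $\tau=0$ case by means of the Markov property, then to compute the $\tau=0$ derivative of the quadratic variation using the short-time asymptotics of the heat kernel together with the representation formula for conditional expectations. First I would observe that, by the formula for conditional expectation (Proposition \ref{prop_condexp}), the induced martingale satisfies $F^{\tau+\eps}(\gamma) = (F_{\gamma[0,\tau]})^\eps(\gamma')$ where $\gamma'$ is the portion of $\gamma$ on $[\tau, \tau+\eps]$ shifted to start at $\gamma_\tau$, and more precisely that conditioning on $\Sigma^\tau$ turns the increment $F^{\tau+\eps}-F^\tau$ into the corresponding increment of the martingale induced by $F_{\gamma[0,\tau]}$ on the path space $P_{T-\tau}\M$ based at $\gamma_\tau = (\pi_1\gamma_\tau, T-\tau)$. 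Combined with the Ito isometry \eqref{ito_isom} and the Fatou estimate \eqref{eq_fatou_est}, this means it suffices to prove
\begin{equation*}
\frac{d[G^\bullet]_\tau}{d\tau}\Big|_{\tau=0}(\gamma) = \lim_{\eps\to 0}\frac{1}{\eps}\, E_{(x,T)}\big[(G^\eps - G^0)^2\big] = 2\abs{\D_x E_{(x,T)}G}^2
\end{equation*}
for an arbitrary cylinder function $G$ on $P_{(x,T)}\M$, and then apply this with $G = F_{\gamma[0,\tau]}$, $x = \pi_1\gamma_\tau$, $T$ replaced by $T-\tau$ (noting the $\Sigma^\tau$-adaptedness needed to legitimately deduce the almost-everywhere-in-$\tau$ statement from the fiberwise one).

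For the $\tau=0$ identity, I would write, using Proposition \ref{prop_condexp} once more, $G^\eps(\gamma) = E_{\gamma_\eps}[G_{\gamma[0,\eps]}] =: h_\eps(\pi_1\gamma_\eps)$ where $h_\eps: M\to\dR$ is a smooth function (this uses that $G$ is a cylinder function, so only finitely many evaluation times are involved, and for $\eps$ small enough all of them exceed $\eps$; the map $y\mapsto E_{(y,T-\eps)}[\,\cdot\,]$ is smooth by standard parabolic regularity, with $h_\eps \to h_0 := E_{(\cdot,T)}G$ in $C^2_{loc}$ as $\eps\to 0$). Then $G^0 = \int_M h_\eps(z)\, d\nu_{(x,T)}(z, T-\eps)$, so
\begin{equation*}
\tfrac{1}{\eps}E_{(x,T)}\big[(G^\eps-G^0)^2\big] = \tfrac{1}{\eps}\int_M \Big(h_\eps(z) - \int_M h_\eps(\hat z)\, d\nu_{(x,T)}(\hat z, T-\eps)\Big)^2 d\nu_{(x,T)}(z, T-\eps),
\end{equation*}
which is exactly the $1$-point computation carried out in the remark following (R3) in the introduction. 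Evaluating it via the short-time heat kernel asymptotics $d\nu_{(x,T)}(\cdot, T-\eps) \to \delta_x$ with Gaussian concentration at scale $\sqrt\eps$ gives $2\abs{\D h_0}^2(x) = 2\abs{\D_x E_{(x,T)}G}^2$ in the limit, since the leading term of the variance of a function against an approximate Gaussian of variance $\sim 2\eps g_T$ is $2\eps\abs{\D h_0}^2(x)$ and the error from $h_\eps$ versus $h_0$ and from the non-Gaussian corrections to the kernel is $o(\eps)$.

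The main obstacle I expect is making the short-time asymptotic computation rigorous and uniform enough: one needs quantitative control on $H(x,T\,|\,z,T-\eps)$ (Gaussian upper and lower bounds, and the fact that its second moment is $2n\eps + o(\eps)$ with the correct Hessian-free leading behavior) on the evolving manifold $(M,g_t)$, plus control on the $\eps$-dependence of $h_\eps$, in order to justify interchanging the limit with the integral and to absorb the curvature and $\partial_t g$ terms into the error. This is where assumption \eqref{tech_ass} is used. A secondary technical point is the passage from the pointwise-in-$\gamma$, $\tau=0$ statement to the claimed identity for almost every $\tau$ and almost every $\gamma$: here I would invoke the $\Sigma^\tau$-measurability of both sides together with the fact that $\frac{d[F^\bullet]_\tau}{d\tau}$ is characterized by \eqref{ito_isom}, so that the fiberwise computation applied along $F_{\gamma[0,\tau]}$ determines the density, as in the derivation of \eqref{eq_fatou_est}. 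The extension from cylinder functions to general $F\in L^2$ is not needed here, since the theorem is stated for cylinder functions.
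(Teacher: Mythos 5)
Your proposal follows essentially the same route as the paper's proof: condition on $\Sigma^\tau$ to express the increment via Propositions \ref{prop_condexp} and \ref{prop_wiener} as a variance of a smooth function $w_\eps$ against the heat kernel measure at scale $\eps$, then take short-time asymptotics to obtain $2\abs{\nabla w_0}^2(\pi_1\gamma_\tau)$ and identify $w_0(y)=E_{(y,T-\tau)}F_{\gamma[0,\tau]}$. The only cosmetic difference is that you phrase the first step as an explicit reduction to $\tau=0$ via the Markov property rather than carrying $\tau$ through the computation, and you correctly flag the same technical burdens (Gaussian heat kernel bounds from \eqref{tech_ass}, $C^2_{loc}$ convergence of $h_\eps$, and upgrading the Fatou $\liminf$ to a genuine limit for cylinder functions) that the paper compresses into ``rough short time asymptotics for the heat kernel.''
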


\begin{proof}[Proof of Theorem \ref{thm_quadrvar}]
Given a cylinder function $F=u\circ e_\sigma:P_{(x,T)}\M\to M^k\to \dR$, and a number $\tau\in[0,T]$, let $j$ be the largest integer such that $\sigma_j\leq \tau$.
By the formula for the conditional expectation (Proposition \ref{prop_condexp}) and the characterization of the Wiener measure (Propositon \ref{prop_wiener}), for $\eps>0$ small enough, $F^{\tau+\eps}$ is given by
\begin{equation}
F^{\tau+\eps}(\gamma)=\int_{M^{k-j}}u(\pi_1\gamma_{\sigma_1},\ldots,\pi_1\gamma_{\sigma_j},y_{j+1},\ldots,y_k) d\nu_{\gamma_{\tau+\eps}}(y_{j+1},s_{j+1})\ldots d\nu_{(y_{k-1},s_{k-1})}(y_k,s_k).
\end{equation}
We can write this as $F^{\tau+\eps}=e_{\tau+\eps}^\ast w_\eps$, where we define $w_\eps=w_{\eps,\gamma_{\sigma_1},\ldots,\gamma_{\sigma_j}}$ by
\begin{equation}
w_\eps(z)=\int_{M^{k-j}}u(\pi_1\gamma_{\sigma_1},\ldots,\pi_1\gamma_{\sigma_j},y_{j+1},\ldots,y_k) d\nu_{(z,T-\tau-\eps)}(y_{j+1},s_{j+1})\ldots d\nu_{(y_{k-1},s_{k-1})}(y_k,s_k).
\end{equation}
Now, since the function $\frac{d[F^\bullet]_\tau}{d\tau}$ is $\Sigma^\tau$-measurable, we can compute
\begin{equation}
\frac{d[F^\bullet]_\tau}{d\tau}(\gamma)=E_{(x,T)}\left[\frac{d[F^\bullet]_\tau}{d\tau}\left|\,\Sigma^\tau\right.\right]=\lim_{\eps\to 0^+}\frac{1}{\eps}E_{(x,T)}\left[(F^{\tau+\eps}-(F^{\tau+\eps})^\tau)^2\left|\,\Sigma^\tau\right.\right],
\end{equation}
where we also used the martingale property $(F^{\tau+\varepsilon})^{\tau}=F^\tau$ and the definition of the quadratic variation, c.f. Section \ref{sec_condexp}.
Using again Proposition \ref{prop_condexp} and Propositon \ref{prop_wiener}, as well as some rough short time asymptotics for the heat kernel, we conclude that
\begin{equation}
\frac{d[F^\bullet]_\tau}{d\tau}(\gamma) =\lim_{\eps\to 0^+}\frac{1}{\eps}\int_{M}\left(w_\eps(z)-\int_M w_\eps(\hat{z})\, d\nu_{\gamma_\tau}(\hat{z}, T-\tau-\eps) \right)^2 d\nu_{\gamma_\tau}(z,T-\tau-\eps)
=2\abs{\nabla w_0}^2\!(\pi_1\gamma_\tau).
\end{equation}
Observing that $w_0(y)=E_{(y,T-\tau)}F_{\gamma[0,\tau]}$, this proves the theorem.
\end{proof}

\begin{proof}[Proof of (R2)$\Rightarrow$(R3')]
Let $(M,g_t)_{t\in I}$ be a smooth family of Riemannian manifolds such that the gradient estimate (R2) holds, and let $F:P_{(x,T)}\M\to \dR$ be a cylinder function.
Observe that
\begin{equation}\label{eq_observe_that}
|{\D^\p_\tau F}|(\gamma|_{[0,\tau]}\ast \gamma')=|\D_0^\p F_{\gamma[0,\tau]}|(\gamma').
\end{equation}
Now, using Theorem \ref{thm_quadrvar}, the gradient estimate (R2), and \eqref{eq_observe_that}, we compute (for a.e. $\gamma$ for a.e. $\tau$)
\begin{multline}\label{eq_proofmarting}
\sqrt{\frac{d[F^\bullet]_\tau}{d\tau}}(\gamma)=E_{(x,T)}\left[\sqrt{\frac{d[F^\bullet]_\tau}{d\tau}}\left|\Sigma^\tau\right.\right]=\sqrt{2}E_{(x,T)}\left[\abs{\D_y E_{(y,T-\tau)}F_{\gamma[0,\tau]}}(\pi_1\gamma_\tau)\left|\Sigma^\tau\right.\right]\\
\leq \sqrt{2}E_{(x,T)}\left[E_{\gamma_\tau}|\D_0^\p F_{\gamma[0,\tau]}|\left|\Sigma^\tau\right.\right]
=\sqrt{2}E_{(x,T)}\left[|{\D^\p_\tau F}|\left|\Sigma^\tau\right.\right],
\end{multline}
where we also used Proposition \ref{prop_condexp} in the last step. This proves (R3').
\end{proof}

\begin{proof}[Proof of (R3')$\Rightarrow$(R3)]
Let $F\in L^2(P_T\M,\Gamma_{(x,T)})$. Using the assumption (R3'), the Cauchy-Schwarz inequality, and the definition of the conditional expectation, we compute
\begin{equation}\label{eq_r3pr3}
E_{(x,T)}\frac{d[F^\bullet]_\tau}{d\tau}
\leq 2 E_{(x,T)}\left(E_{(x,T)}\left[|{\D^\p_\tau F}|\left|\Sigma^\tau\right.\right]\right)^2\leq 2 E_{(x,T)}|{\D^\p_\tau F}|^2.
\end{equation}
This proves the martingale estimate (R3).
\end{proof}

\subsection{Log-Sobolev inequality and spectral gap}

In this section, we prove the implications (R3')$\Rightarrow$(R4)$\Rightarrow$(R5).

\begin{proof}[Proof of (R3')$\Rightarrow$(R4)]
Let $F:P_T\M\to \dR$ be a cylinder function, and let $\{G^\tau\}_{\tau\in [0,T]}$ be the martingale induced by the function $G=F^2$, i.e.
$G^\tau=E_{(x,T)}[F^2|\Sigma^\tau]$.
Using the Ito formula and the martingale property we compute
\begin{equation}\label{logsob1}
 E_{(x,T)}[G^{\tau_2}\log G^{\tau_2}-G^{\tau_1}\log G^{\tau_1}]=E_{(x,T)}\int_{\tau_1}^{\tau_2} d (G^\tau\log G^\tau)=E_{(x,T)}\int_{\tau_1}^{\tau_2} \frac{1}{2G^\tau}\frac{d[G^\bullet]_\tau}{d\tau}d\tau.
\end{equation}
By assumption (R3'), the Cauchy-Schwarz inequality, and the definition of $G^\tau$, we have the estimate
\begin{equation}\label{logsob2}
 \frac{d[G^\bullet]_\tau}{d\tau}\leq 2 \left(E_{(x,T)}\left[|{2F \D^\p_\tau F}|\left|\Sigma^\tau\right.\right]\right)^2\leq 8 G^\tau\, E_{(x,T)}\left[|{\D^\p_\tau F}|^2\left|\Sigma^\tau\right.\right].
\end{equation}
Combining \eqref{logsob1} and \eqref{logsob2} we conclude that
\begin{equation}
 E_{(x,T)}[G^{\tau_2}\log G^{\tau_2}-G^{\tau_1}\log G^{\tau_1}]\leq 4 E_{(x,T)}\int_{\tau_1}^{\tau_2} E_{(x,T)}\left[|{\D^\p_\tau F}|^2\left|\Sigma^\tau\right.\right]\, d\tau=4E_{(x,T)}\langle F,\mathcal{L}_{\tau_1,\tau_2}F\rangle,
\end{equation}
where we used Propositon \ref{prop_malliavin} in the last step. This proves the log-Sobolev inequality (R4).
\end{proof}

\begin{proof}[Proof of (R4)$\Rightarrow$(R5)] Applying the log-Sobolev inequality for $F^2=1+\varepsilon G$ and using approximation, we obtain
\begin{equation}
 E_{(x,T)}[(G^{\tau_2})^2-(G^{\tau_1})^2]\leq 2E_{(x,T)}\langle G,\mathcal{L}_{\tau_1,\tau_2}G\rangle.
\end{equation}
Observing that $E_{(x,T)}[(G^{\tau_2})^2-(G^{\tau_1})^2]=E_{(x,T)}[(G^{\tau_2}-G^{\tau_1})^2]$, this proves the spectral gap.
\end{proof}


\subsection{Conclusion of the argument}

The goal of this final section is to prove the remaining implications (R5)$\Rightarrow$(R3)$\Rightarrow$(R2')$\Rightarrow$(R1).

\begin{proof}[Proof of (R5)$\Rightarrow$(R3)] Using the formula for the Malliavin gradient (Proposition \ref{prop_malliavin}) we can rewrite the spectral gap estimate (R5) in the form
\begin{equation}
E_{(x,T)}(F^{\tau_2}-F^{\tau_1})^2\leq 2 E_{(x,T)}\int_{\tau_1}^{\tau_2}\abs{\nabla_{\tau}^\p F}^2 d\tau.
\end{equation}
Dividing both sides by $\tau_2-\tau_1$ and limiting $\tau_2\to\tau_1$ we obtain
\begin{equation}
E_{(x,T)} \frac{d[F^\bullet]_\tau}{d\tau} \leq 2 E_{(x,T)}\abs{\nabla_{\tau}^\p F}^2,
\end{equation}
which is exactly the martingale estimate (R3).
\end{proof}

\begin{proof}[Proof of (R3)$\Rightarrow$(R2')] 
The quadratic variation formula (Theorem \ref{thm_quadrvar}) at $\tau=0$ reads
\begin{equation}
\abs{\nabla_x E_{(x,T)}F}^2=\tfrac12 E_{(x,T)} \frac{d[F^\bullet]_\tau}{d\tau}|_{\tau=0}.
\end{equation}
Together with the martingale estimate (R3) at $\tau=0$ this implies
\begin{equation}
 \abs{\nabla_x E_{(x,T)}F}^2 \leq E_{(x,T)}\abs{\nabla^\p F}^2,
\end{equation}
which is exactly the gradient estimate (R2').
\end{proof}

\begin{proof}[Proof of (R2')$\Rightarrow$(R1)] 
Let $(M,g_t)_{t\in I}$ be an evolving family of Riemannian manifolds satisfying the gradient estimate (R2'). 
Plugging in a $1$-point cylinder function $F=u\circ e_\sigma:P_T\M\to M\to \dR$, the estimate (R2') reduces to the estimate
\begin{equation}
\abs{\D P_{sT} u}^2\leq P_{sT}\abs{\D u}^2,
\end{equation}
c.f. Remark \ref{rem_gradest}. Thus, by Theorem \ref{thm_supersol} (only the implication (S3)$\Rightarrow$(S1) is needed), 
 $(M,g_t)_{t\in I}$ is a supersolution of the Ricci flow.
 To show that  $(M,g_t)_{t\in I}$  is also a subsolution, we will analyze the gradient estimate (R2') for a carefully chosen family of 2-point cylinder functions. Namely, given a point $(x,T)\in\mathcal{M}$ in space-time ($T>0$) and a unit tangent vector $v\in(T_xM,g_T)$ we choose a test function $u:M\times M\to\dR$ such that
\begin{equation}
 \textrm{grad}_{g_T}^{(1)}u=2v, \qquad \textrm{grad}_{g_T}^{(2)} u=- v,\qquad \Hess_{g_T} u=0\qquad\qquad \textrm{at} \,\, (x,x).
\end{equation}
We consider the 1-parameter family of test functions
\begin{equation}
 F^\sigma(\gamma)=u(e_0(\gamma),e_\sigma(\gamma)),
\end{equation}
where $\sigma\in[0,T]$. We will now analyze the asymptotics for $\sigma\to 0$. We start with the rough estimate
\begin{equation}\label{eq_roughassym}
 E_{(x,T)}\abs{\nabla^\p F^\sigma -v} = O(\sigma).
\end{equation}
Together with the gradient formula (Theorem \ref{thm_gradient_formula}) this implies that
\begin{equation}\label{eq_limitisone}
 \lim_{\sigma\to 0}\abs{\textrm{grad}_{g_T} E_{(x,T)}F^\sigma}^2=1=\lim_{\sigma\to 0} E_{(x,T)}\abs{\nabla^\p F^\sigma}^2.
\end{equation}
To compute the next order term, we first note that the gradient formula (Theorem \ref{thm_gradient_formula}) yields the estimate
\begin{equation}\label{eq_asym1}
\grad_{g_T} E_{(x,T)}F^\sigma=E_{(x,T)}[\D^\p F^\sigma]+\sigma(\Ric+\tfrac12\partial_t g)_{(x,T)}(v)+o(\sigma).
\end{equation}
Using this, we compute
\begin{align}\label{eq_asym2}
\tfrac{1}{2} \tfrac{d}{d\sigma}|_{\sigma=0}\left(\left|\grad_{g_T} E_{(x,T)}F^\sigma\right|^2-E_{(x,T)}\abs{\D^\p F^\sigma}^2\right)
&=\left\langle v, \tfrac{d}{d\sigma}|_{\sigma=0}\left(\grad_{g_T} E_{(x,T)}F^\sigma-E_{(x,T)}[\D^\p F^\sigma]\right)\right\rangle\nonumber\\
&=(\Ric+\tfrac12\partial_t g)_{(x,T)}(v,v).
\end{align}
Together with \eqref{eq_limitisone}, since the gradient estimate (R2') holds by assumption, we conclude that 
\begin{equation}
(\Ric+\tfrac12\partial_t g)_{(x,T)}(v,v)\leq 0.
\end{equation}
Since $(x,T)$ and $v$ are arbitrary, this proves that  $(M,g_t)_{t\in I}$ is a subsolution of the Ricci flow. Recalling that we already know that  $(M,g_t)_{t\in I}$ is a supersolution of the Ricci flow, this finishes the proof.
\end{proof}

\appendix

\section{A variant of Driver's integration by parts formula}

The purpose of this appendix is to prove Theorem \ref{thm_intbyparts}, a variant of Driver's integration by parts formula \cite{Driver_ibp}.
We write $(F,G)=E_{(x,T)}FG$. Moreover, if $v_\tau\in T_xM$ we use the notation $\langle v_\tau,dW_\tau\rangle=(U_0^{-1}v_\tau)_i\,  dW_\tau^i$. 

\begin{theorem}[Integration by parts]\label{thm_intbyparts}
Let $F,G:P_T\M\to\dR$ be cylinder functions, let $\{v_\tau\}_{\tau\in [0,T]}\in\cH$, and write $V=\{P_\tau^{-1}v_\tau\}_{\tau\in[0,T]}$. Then
\begin{equation}
D_V^\ast G=-D_V G + \tfrac12 G\!\! \int_0^T\!\!\langle  \tfrac{d}{d\tau} v_\tau-P_{\tau}(\Ric+\tfrac{1}{2}\partial_t{g})P_\tau^{-1} v_\tau,dW_\tau\rangle
\end{equation}
satisfies $(D_V F,G)=(F,D_V^\ast G)$. 
\end{theorem}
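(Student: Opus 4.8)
The plan is to establish the integration by parts formula for cylinder functions by reducing to a computation on Euclidean Wiener space, along the lines of Driver and of Bismut's approach. First I would note that both $F$ and $G$ are cylinder functions, so $D_V F$ and $D_V G$ are well-defined; the claimed identity $(D_V F, G) = (F, D_V^\ast G)$ is equivalent, after substituting the proposed formula for $D_V^\ast G$, to the symmetrized statement
\begin{equation}
E_{(x,T)}\big[(D_V F)\, G + F\, (D_V G)\big] = \tfrac12\, E_{(x,T)}\Big[ F G \int_0^T\langle \tfrac{d}{d\tau} v_\tau - P_\tau(\Ric + \tfrac12 \partial_t g) P_\tau^{-1} v_\tau,\, dW_\tau\rangle\Big].
\end{equation}
The left side is $E_{(x,T)}[D_V(FG)]$, so it suffices to prove the formula for a single cylinder function, i.e. to show $E_{(x,T)}[D_V H] = \tfrac12 E_{(x,T)}[H \int_0^T\langle \dot v_\tau - P_\tau(\Ric+\tfrac12\partial_t g)P_\tau^{-1} v_\tau, dW_\tau\rangle]$ for every cylinder function $H$.

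The key mechanism is a Cameron-Martin type change of variables. The perturbation $\gamma \mapsto \gamma^{V,\eps}$, with $x^{V,\eps}_\tau = \exp^{g_\tau}_{x_\tau}(\eps V_\tau)$, corresponds at the level of the frame bundle SDE \eqref{SDE} to perturbing the driving Brownian motion $W$ by a (random, adapted) drift $h^\eps_\tau$ whose leading order in $\eps$ is determined by how the parallel vector field $V$ fails to be parallel transported by the flow generated by the horizontal vector fields. The standard computation (see Hsu, or the original Bismut/Driver argument) shows that the relevant drift has time-derivative $\dot h_\tau = \tfrac{d}{d\tau} v_\tau - P_\tau(\Ric + \tfrac12 \partial_t g)P_\tau^{-1} v_\tau$ when written in the frame at $(x,T)$; the Ricci term arises from the commutator of the horizontal Laplacian with horizontal vector fields (this is exactly the source of the curvature term in the derivative-of-heat-semigroup computations), and the $\tfrac12 \partial_t g$ term arises because the space-time connection \eqref{eq_spacetime_connection} is built to be metric compatible and its time-component carries precisely this tensor. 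I would derive this drift by applying the Ito formula (Proposition \ref{prop_SDE}) to $\tilde{V}$ along $U_\tau$, mirroring the Feynman-Kac computation in Proposition \ref{prop_feynman_kac}, and reading off the correction terms.

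Once the drift $\dot h_\tau$ is identified, I would invoke the Girsanov theorem on $P_0\dR^n$: under the measure with density $\mathcal{E}^\eps = \exp(\tfrac{1}{2}\int_0^T \langle \dot h_\tau, dW_\tau\rangle - \tfrac{1}{4}\int_0^T |\dot h_\tau|^2 d\tau)$ (with the factor $\tfrac12$, resp. $\tfrac14$, coming from our normalization $dW^i dW^j = 2\delta_{ij} d\tau$), the shifted Brownian motion has the same law as the original, so $E_{(x,T)}[H(\gamma^{V,\eps})] = E_{(x,T)}[H(\gamma)\, \mathcal{E}^\eps \cdot (1 + O(\eps^2))]$ up to the error in matching the exact drift with its linearization. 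Differentiating at $\eps = 0$ gives $E_{(x,T)}[D_V H] = \tfrac{d}{d\eps}\big|_0 E_{(x,T)}[H \mathcal{E}^\eps] = \tfrac12 E_{(x,T)}[H \int_0^T \langle \dot h_\tau, dW_\tau\rangle]$, which is the desired formula. The estimate \eqref{eq_adj2}, which follows from the Ito isometry together with the boundedness assumption \eqref{tech_ass}, guarantees all the integrands are in $L^2$ and justifies interchanging limit and expectation.

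The main obstacle is the careful justification of the differentiation under the expectation and of the Girsanov step: one must control the $\eps$-dependence of $\gamma^{V,\eps}$ and of the exact Girsanov density uniformly enough to differentiate at $\eps=0$, show the remainder is genuinely $o(\eps)$, and handle the fact that the exponential map $\exp^{g_\tau}$ perturbation does not exactly equal the frame-bundle flow perturbation (they agree to first order, which is all that is needed, but quantifying this requires the bounds in \eqref{tech_ass} on $|\Rm|$, $|\partial_t g|$ and $|\nabla\partial_t g|$). For cylinder functions $H = u \circ e_\sigma$ with $u$ compactly supported and smooth, all these estimates are routine given \eqref{tech_ass}, so I would state them as lemmas and defer the bookkeeping; the conceptual content is entirely in the identification of the drift $\dot h_\tau$ and the appearance of the tensor $\Ric + \tfrac12 \partial_t g$, which is the same object that governs the gradient formula (Theorem \ref{thm_gradient_formula}) and the Bochner formula \eqref{parboch}.
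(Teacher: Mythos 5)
Your proposal takes a genuinely different route from the paper. After the same reduction (via the product rule, to showing $E_{(x,T)}[D_V H] = \tfrac12 E_{(x,T)}[H\int_0^T\langle \dot v_\tau - P_\tau(\Ric+\tfrac12\partial_t g)P_\tau^{-1}v_\tau,dW_\tau\rangle]$ for a single cylinder function), the paper adapts Hsu's Feynman--Kac/martingale argument: it proceeds by induction on the number of evaluation points, and for a $1$-point cylinder function it computes $E_{(x,T)}[D_V F]$ explicitly by combining the Feynman--Kac gradient formula (Proposition \ref{prop_feynman_kac}), the stochastic integral representation \eqref{repform_integrand}, the Ito isometry, and the martingale property of $N_\tau=R_\tau P_\tau\grad_{g_{T-\tau}}w$; the induction step then uses the Markov property (Proposition \ref{prop_condexp}). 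You instead propose Driver's original flow/Girsanov argument: identify an adapted drift $h^\eps$ on Wiener space whose shift of $W$ matches, to first order in $\eps$, the exponential-map perturbation $\gamma^{V,\eps}$, and then differentiate the Girsanov density at $\eps=0$. This is a valid alternative, and your Girsanov normalization (the factors $\tfrac12$ and $\tfrac14$, coming from $dW^i dW^j = 2\delta_{ij}d\tau$) is right. What your route buys is a conceptually transparent account of where $\Ric+\tfrac12\partial_t g$ enters, namely through the variational SDE for the stochastic flow. What you give up is that the Driver route requires genuine infinite-dimensional work that your proposal defers: the actual derivation of the drift (cited as ``the standard computation''), the verification that the exponential-map variation of $\gamma$ --- which is not literally a drift shift of $W$ --- agrees with one to first order (the main technical burden of Driver's paper), and the Girsanov justification for an adapted random drift. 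The paper's Feynman--Kac/induction argument avoids the path-space flow entirely by exploiting the explicit finite-dimensional structure of cylinder functions, which is cleaner for the class of test functions the theorem actually requires.
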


\begin{proof}
We adapt the proof from \cite[Sec. 8]{Hsu} to our setting of evolving manifolds.

Since $D_V$ satisfies the product rule it is enough to show that 
\begin{equation}\label{eq_to_prove}
E_{(x,T)}[D_V F]=\tfrac12 E_{(x,T)}[F\!\! \int_0^T\!\!\langle  \tfrac{d}{d\tau} v_\tau-P_{\tau}(\Ric+\tfrac{1}{2}\partial_t g)P_\tau^{-1} v_\tau,dW_\tau\rangle]
\end{equation}
for all cylinder functions $F$. We prove this by induction on the order $k$ of the cylinder function $F$.\\

\noindent{\underline{$k=1$:}} Let $F=e_\sigma^\ast u$ be a 1-point cylinder function, and let $s=T-\sigma$. Since $w(x,t):=P_{st}u(x)$ satisfies the heat equation, its gradient satisfies the equation
\begin{equation}
\nabla_t\, \grad_{g_t}\!w=\Lap_{g_t}\,\grad_{g_t}\!w-(\Ric+\tfrac12 \partial_t g)(\grad_{g_t}\!w,\cdot)^{\sharp_{g_t}},
\end{equation}
c.f. the proof of Proposition \ref{prop_evol_grad}. By the Feynman-Kac formula (Proposition \ref{prop_feynman_kac}) we have
\begin{equation}\label{app_gradient_formula}
\grad_{g_T}\!w\,(x,T)=E_{(x,T)}[R_\sigma P_\sigma \grad_{g_s}\!u\, (X_\sigma)],
\end{equation}
where $R_\tau=R_\tau(\gamma):(T_xM,g_T)\to (T_xM,g_T)$ solves the ODE $\tfrac{d}{d\tau} R_\tau=R_{\tau}P_\tau (\Ric+\tfrac12 \partial_t{g})_{T-\tau}P_\tau^{-1}$ with $R_0=\textrm{id}$, and where we view $(\Ric+\tfrac12 \partial_tg)_{T-\tau}$ as endomorphism of $TM$ (using the metric $g_{T-\tau}$).

By equation \eqref{repform_integrand} we have
\begin{align}\label{repform_integrand_app}
u(X_{\sigma})=w(x,T)+\int_0^{\sigma} \nabla^H \tilde{w}\,(U_\tau)\cdot dW_\tau,
\end{align}
where $\tilde{w}$ is the invariant lift of $w$ and $\nabla^H \tilde{w}=(H_1\tilde{w},\ldots,H_n\tilde{w})$ is its  horizontal gradient.

Let $\{z_\tau\}_{\tau\in [0,T]}\in \cH$. Using the above and the Ito isometry, we compute the following expectation value:
\begin{align}
E_{(x,T)}\,\, u(X_\sigma)\!\!\int_0^\sigma\! \langle R_\tau^\dag \dot{z}_\tau,dW_\tau\rangle
&=E_{(x,T)}\int_0^{\sigma}\! \nabla^H \tilde{w}\,(U_\tau)\cdot dW_\tau \int_0^\sigma \langle R_\tau^\dag \dot{z}_\tau,dW_\tau\rangle\\
&=2E_{(x,T)}\int_0^{\sigma}\! \langle  R_\tau^\dag \dot{z}_\tau, \nabla^H\tilde{w}(U_\tau)\rangle \, d\tau\\
&=2E_{(x,T)}\int_0^{\sigma}\! \langle \dot{z}_\tau, R_\tau U_0\nabla^H\tilde{w}(U_\tau)\rangle_{g_T} \, d\tau.
\end{align}
Let $N_\tau :=R_\tau U_0\nabla^H\tilde{w}(U_\tau)=R_\tau P_\tau\,\grad_{g_{T-\tau}}\!\!\!\! w(X_\tau,T-\tau)$. Integration by parts gives
\begin{equation}
E_{(x,T)}\int_0^{\sigma}\! \langle \dot{z}_\tau, N_\tau\rangle_{g_T} \, d\tau=E_{(x,T)}[\langle z_\sigma, N_\sigma\rangle_{g_T}-\int_0^{\sigma}\! \langle {z}_\tau, dN_\tau\rangle_{g_T}]= E_{(x,T)}\langle z_\sigma, N_\sigma\rangle_{g_T},
\end{equation}
where in the last step we used that $N_\tau$ is a martingale, c.f. equation \eqref{eq_proof_of_feynkac}. Putting things together, and taking also into account that
\begin{equation}
E_{(x,T)}\,\, u(X_\sigma)\!\!\int_\sigma^T\! \langle R_\tau^\dag \dot{z}_\tau,dW_\tau\rangle
=E_{(x,T)} u(X_\sigma)\,\,E_{(x,T)}\!\!\int_\sigma^T\! \langle R_\tau^\dag \dot{z}_\tau,dW_\tau\rangle=0,
\end{equation}
we obtain
\begin{equation}
E_{(x,T)}\,\, u(X_\sigma)\!\!\int_0^T\! \langle R_\tau^\dag \dot{z}_\tau,dW_\tau\rangle
=2E_{(x,T)}\langle R_\sigma^\dag z_\sigma, P_\sigma\grad_{g_s}\! u(X_\sigma) \rangle_{g_T}.
\end{equation}
Finally, we let $v_\tau=R_\tau^\dag z_\tau$. Then
\begin{equation}
R_\tau^\dag \dot{z}_\tau=\dot{v}_\tau-P_\tau(\Ric+\tfrac{1}{2}\partial_t g)P_\tau^{-1}v_{\tau},
\end{equation}
and equation \eqref{eq_to_prove} follows.\\

\noindent{\underline{$k-1\rightarrow k$:}} Let $F = e_\sigma^\ast f$ be a k-point cylinder function and let $s_i = T -\sigma_i$.
Define a new function of $k-1$ variables by
\begin{equation}
g(x_1,\ldots, x_{k-1})=E_{(x_{k-1},s_{k-1})}f(x_1,\ldots,x_{k-1},X'_{\sigma_k-\sigma_{k-1}}),
\end{equation}
where $X'$ is based at $x_{k-1}$. Let $G:P_{(x,T)}\M\to\dR$ be the $(k-1)$-point cylinder function
\begin{equation}
G(\gamma)=g(e_{\sigma_1}\gamma,\ldots,e_{\sigma_{k-1}}\gamma).
\end{equation}
In belows computation we will frequently use the Markov property (Proposition \ref{prop_condexp}). 

The first step is to express
\begin{equation}
E_{(x,T)}D_VF=\sum_{j=1}^k E_{(x,T)}\langle v_{\sigma_j},P_{\sigma_j}\grad_{g_{s_j}}^{(j)} f(X_{\sigma_1},\ldots,X_{\sigma_k})\rangle_{g_T}
\end{equation}
in terms of $G$.
To this end, note that for $j=1,\ldots, k-2$ we simply have
\begin{equation}
\grad^{(j)}_{g_{s_j}}g(x_1,\ldots,x_{k-1})=E_{(x_{k-1},s_{k-1})}\grad^{(j)}_{g_{s_j}}f(x_1,\ldots,x_{k-1},X'_{\sigma_k-\sigma_{k-1}}).
\end{equation}
For $j=k-1$ using the product rule and the gradient formula \eqref{app_gradient_formula} we have
\begin{multline}
\grad^{(k-1)}_{g_{s_{k-1}}}g(x_1,\ldots,x_{k-1})=
E_{(x_{k-1},s_{k-1})}\grad^{(k-1)}_{g_{s_{k-1}}}f(x_1,\ldots,x_{k-1},X'_{\sigma_k-\sigma_{k-1}})\\
+E_{(x_{k-1},s_{k-1})} R'_{\sigma_k-\sigma_{k-1}} P'_{\sigma_k-\sigma_{k-1}}  \grad_{g_{s_k}}\!f\, (x_1,\ldots,x_{k-1},X'_{\sigma_k-\sigma_{k-1}}),
\end{multline}
where $R'_\tau=R'_\tau(\gamma)\!:\!(T_{x_{k-1}}M,g_{s_{k-1}})\to (T_{x_{k-1}}M,g_{s_{k-1}})$ solves the ODE
$\tfrac{d}{d\tau} R'_\tau=R'_{\tau}P'_\tau (\Ric+\tfrac12 \partial_t{g})_{s_{k-1}-\tau}P'^{-1}_\tau$ with $R_0=\textrm{id}$.
Taking expectations, we thus obtain
\begin{multline}
E_{(x,T)}D_VF=E_{(x,T)}D_VG+E_{(x,T)}\langle v_{\sigma_k},P_{\sigma_k}\grad_{g_{s_k}}^{(k)} f(X_{\sigma_1},\ldots,X_{\sigma_k})\rangle_{g_T}\\
-E_{(x,T)}E_{(X_{\sigma_{k-1}},s_{k-1})}\langle v_{\sigma_{k-1}},P_{\sigma_{k-1}} R'_{\sigma_k-\sigma_{k-1}} P'_{\sigma_k-\sigma_{k-1}}  \grad_{g_{s_k}}^{(k)}\!f\, (X_{\sigma_1},\ldots,X_{\sigma_{k-1}},X'_{\sigma_k-\sigma_{k-1}})\rangle_{g_T}.
\end{multline}
By the induction hypothesis we have
\begin{equation}\label{app_term1}
E_{(x,T)}D_VG=\tfrac12 E_{(x,T)}[G \int_0^{\sigma_{k-1}}\langle  \tfrac{d}{d\tau} v_\tau-P_{\tau}(\Ric+\tfrac{1}{2}\partial_t g)P_\tau^{-1} v_\tau,dW_\tau\rangle].
\end{equation}
Conditioning, using the induction hypothesis for $1$-point functions, and unconditioning again, we compute
\begin{align}\label{app_term2}
&\!\!\!\!\!\!\!\!\!\!\!\!\!\!\!\!\!\!E_{(x,T)}\langle v_{\sigma_k}-v_{\sigma_{k-1}},P_{\sigma_k}\grad_{g_{s_k}}^{(k)} f(X_{\sigma_1},\ldots,X_{\sigma_k})\rangle_{g_T}\nonumber\\
&=E_{(x,T)}E_{(X_{\sigma_{k-1}},s_{k-1})}\langle P_{\sigma_{k-1}}^{-1}( v_{\sigma_k}-v_{\sigma_{k-1}}),P'_{\sigma_k-\sigma_{k-1}}\grad_{g_{s_k}}^{(k)} f(X_{\sigma_1},\ldots,X_{\sigma_{k-1}},X'_{\sigma_k})\rangle_{g_{s_{k-1}}}\nonumber\\
&=\tfrac12 E_{(x,T)}[F \int_{s_{k-1}}^T\langle  \tfrac{d}{d\tau} v_\tau-P_{\tau}(\Ric+\tfrac{1}{2}\partial_t g)P_\tau^{-1} (v_\tau-v_{\sigma_{k-1}}),dW_\tau\rangle].
\end{align}
Finally, using the induction hypothesis for $1$-point functions and the ODE for $R'$ we compute
\begin{align}\label{app_term3}
&\!\!\!\!\!\!E_{(x,T)}E_{(X_{\sigma_{k-1}},s_{k-1})}\langle v_{\sigma_{k-1}},(P_{\sigma_k}-P_{\sigma_{k-1}} R'_{\sigma_k-\sigma_{k-1}} P'_{\sigma_k-\sigma_{k-1}})  \grad_{g_{s_k}}^{(k)}\!f\, (X_{\sigma_1},\ldots,X_{\sigma_{k-1}},X'_{\sigma_k-\sigma_{k-1}})\rangle_{g_T}\nonumber\\
&=E_{(x,T)}E_{(X_{\sigma_{k-1}},s_{k-1})}\langle (I-R_{\sigma_{k}-\sigma_{k-1}}'^\dag) P_{\sigma_{k-1}}^{-1} v_{\sigma_{k-1}}, P'_{\sigma_k-\sigma_{k-1}}  \grad_{g_{s_k}}^{(k)}\!f\, (X_{\sigma_1},\ldots,X_{\sigma_{k-1}},X'_{\sigma_k-\sigma_{k-1}})\rangle_{g_{s_{k-1}}}\nonumber\\
&= E_{(x,T)}[F \int_{s_{k-1}}^{s_k}\langle  P_{\tau}(\Ric+\tfrac{1}{2}\partial_t g)P_\tau^{-1} v_{\sigma_{k-1}},dW_\tau\rangle].
\end{align}
Adding \eqref{app_term1}, \eqref{app_term2} and \eqref{app_term3} we conclude that
\begin{equation}
 E_{(x,T)}D_VF=\tfrac12 E_{(x,T)}[F\!\! \int_0^T\!\!\langle  \tfrac{d}{d\tau} v_\tau-P_{\tau}(\Ric+\tfrac{1}{2}\partial_t g)P_\tau^{-1} v_\tau,dW_\tau\rangle].
\end{equation}
This proves the theorem.
\end{proof}
\bibliography{HN_weakricciflow}

\bibliographystyle{alpha}

\vspace{10mm}
{\sc Robert Haslhofer, Courant Institute of Mathematical Sciences, New York University, 251 Mercer Street, New York, NY 10012, USA}\\

{\sc Aaron Naber, Department of Mathematics, Northwestern University, 2033 Sheridan Road, Evanston, IL 60208, USA}\\

\emph{E-mail:} robert.haslhofer@cims.nyu.edu, anaber@math.northwestern.edu

\end{document}